\documentclass[10pt]{article}

\makeatletter
\DeclareRobustCommand{\qed}{%
  \ifmmode % if math mode, assume display: omit penalty etc.
  \else \leavevmode\unskip\penalty9999 \hbox{}\nobreak\hfill
  \fi
  \quad\hbox{\qedsymbol}}
\newcommand{\openbox}{\leavevmode
  \hbox to.77778em{%
  \hfil\vrule
  \vbox to.675em{\hrule width.6em\vfil\hrule}%
  \vrule\hfil}}
\newcommand{\qedsymbol}{\openbox}
\newenvironment{proof}[1][\proofname]{\par
  \normalfont
  \topsep6\p@\@plus6\p@ \trivlist
  \item[\hskip\labelsep\itshape
    #1.]\ignorespaces
}{%
  \qed\endtrivlist
}
\newcommand{\proofname}{Proof}
\makeatother

\usepackage[utf8]{inputenc}

\usepackage{bm}

\usepackage{color}
\usepackage{latexsym}
\usepackage{dsfont}
\usepackage{amssymb}
\usepackage{comment}
\usepackage{graphicx}
\usepackage{amsmath,amsfonts,amssymb,enumerate,theorem,euscript,array,amsfonts,mathrsfs}
\usepackage{hyperref}
\usepackage{appendix}
\usepackage[T1]{fontenc}
\usepackage{babel}
\numberwithin{equation}{section}
\usepackage{bbm}
\usepackage{subfigure}
\usepackage{color}

\usepackage{stmaryrd}

\usepackage[algo2e,ruled,vlined]{algorithm2e} 
 \usepackage{algorithm}
 \usepackage{algorithmicx}
\usepackage{algpseudocode}
\usepackage{ marvosym }
\usepackage{hyperref}
\usepackage{bm}
\usepackage{xcolor, soul}

\usepackage{comment}

\usepackage{tikz}
\usetikzlibrary{fit,matrix,chains,positioning,decorations.pathreplacing,arrows}

\usepackage[a4paper,margin=1in,heightrounded]{geometry}

\usepackage{algpseudocode}
\usepackage{algorithm}

\usepackage[normalem]{ulem}

\def \b1{\bf{1}}

\def \R{\mathbb{R}}

\def \E{\mathbb{E}}
\def \F{\mathbb{F}}

\def \P{\mathbb{P}}

\def\esssup_#1{\underset{#1}{\mathrm{ess\,sup\, }}}

\def\argmin_#1{\underset{#1}{\mathrm{argmin\, }}}
\def\argmax_#1{\underset{#1}{\mathrm{argmax\, }}}

\def\dm#1{\frac{\delta}{\delta m}}

\def \Ac{{\cal A}}
\def \Bc{{\cal B}}

\def \Ec{{\cal E}}
\def \Fc{{\cal F}}

\def \Ic{{\cal I}}

\def \Pc{{\cal P}}

\def \Oc{{\cal O}}

\def \eps{\varepsilon}

\def\reff#1{{\rm(\ref{#1})}}

\def\beqs{\begin{eqnarray*}}
\def\enqs{\end{eqnarray*}}
\def\beq{\begin{eqnarray}}
\def\enq{\end{eqnarray}}

\newtheorem{Theorem}{Theorem}[section]

\newtheorem{Proposition}{Proposition}[section]

\newtheorem{Lemma}{Lemma}[section]

\numberwithin{equation}{section}

\title{%A Martingale Optimality Approach \\for 
Portfolio Exponential Utility Maximization\\ with Jump Signals}
\author{Lokmane Abbas Turki\footnote{LPSM, Sorbonne Université, \sf {lokmane.abbas$\_$turki at sorbonne-universite.fr}. This author partially benefited from the support of the Chair \textit{Capital Markets Tomorrow: Modeling and Computational Issues} under the aegis of the Institut Europlace de Finance,  a joint initiative of Laboratoire de Probabilit\'es, Statistique et Mod\'elisation (LPSM), Universit\'e Paris Cit\'e and  Cr\'edit Agricole CIB.}  \and Sigui Brice Dro\footnote{BPCE SA and LPSM, Sorbonne Université,  \sf{dro at lpsm.paris}.}  \and Idris Kharroubi \footnote{LPSM, Sorbonne Université, {\sf kharroubi at lpsm.paris}. The work of this author is  partially supported by Agence Nationale de la Recherche (ReLISCoP grant ANR-21-CE40-0001)}  }

\begin{document}

\maketitle

\begin{abstract}
In this paper, we study the portfolio utility maximization in the case where the risky asset is driven by a Brownian motion and an independent  homogeneous Poisson measure, with strategies that may include jump signals.   This means that the allowed strategies are no longer predictable but also include the information given by a process driven by the Poisson measure. Using the results of Bank and K\"orber \cite{bank2022merton}, we first express the considered portfolio as semi-martingale processes. We then present the martingale optimality principle for the exponential utility maximization.  
This allows to derive an original BSDE with jumps and to  express the optimal value and an optimal strategy using the solution to this original BSDE. We then prove existence of a solution to the considered BSDE. We finally present some numerical experiments to quantify the gain of utility given by the information from the jump signals.  
\end{abstract}

\tableofcontents{}
\newpage
\section{Introduction}
Finding a relevant investment strategy is a fundamental task in finance and has therefore attracted a lot of interest from practitioners and  academicians. Among this last community, the high frequency of transactions in financial markets leads to develop continuous time models in Brownian information going back to  Merton  \cite{Merton69} and \cite{Merton71} who studied combined investment/consumption choice.

Another problem close to the previous one and which has been considered by numerous works is the maximization of the portfolio terminal expected utility. Such a problem is especially interesting in the case where the considered strategies are subject to constraints as the model loses its financial completeness. One then needs to specify a valuation of contingent claims and  the indifference pricing which involves utility maximization is a possible solution.

Such a utility maximization problem under portfolio constraints has been considered  for the exponen--tial utility case by Cvitanic and Karatzas \cite{CK92}. Assuming that the portfolio constraints are convex, the authors solve the problem using convex duality.  
The case of general constraints is considered by Hu \textit{et al.} \cite{hu2005utility}. In this paper, the authors use a so-called martingale optimality principle to characterize an optimal solution and the optimal value. This martingale optimality principle allows to derive a Backward Stochastic Differential Equation (BSDE for short) that has a quadratic growth. The authors solve this BSDE and get an optimal solution. We mention that the general case of Brownian BSDEs with quadratic growth has been consider by Kobylansky \cite{kobylanski}. 

\vspace{2mm}

The literature has also considered the mixed Brownian-Poisson case. Becherer \cite{becherer2006bounded} considers the case where the contingent claim derives from both a Brownian motion and an independent jump Poisson process, but the underlying assets are continuous and the investment strategies are not constrained. The exponential utility maximization leads  to solve a BSDE with jumps that  has linear growth in the unknown, except the jump component. % $y$ and $z$ but not in $u$. 
The  case where the underlying assets are driven by both a Brownian motion and a Poisson jump measure, and some constraints are imposed on the investment strategies has also been considered.  Morlais \cite{MAM08} studies the case of  a finite activity jump and extends the study to the infinite activity case in\cite{MAM09}. The author also follows the martingale optimality principle which lead to solve a BSDE with jumps whose generator is quadratic in the Brownian stochastic integrand (usually denoted by $z$) and with an exponential term involving the Poisson stochastic integrand variable (usually denoted by $u$). The derived BSDE is solved using a penalization approach and relying on an extension of a stability results for BSDEs proved by Kobylansky \cite{kobylanski}  in the Brownian case. More generally, quadratic BSDEs with jumps has  attracted a lot of interest and many works study their solvability. Among them, Kazi-Tani \textit{et al.} \cite{KTPZ15}  solves the BSDE in the case where either the terminal condition is small enough or the driver is regular enough, El Karoui \textit{ et al.}\cite{EMN16} considers the case of a square integrable jump measure and Matoussi and Salhi \cite{MS19} 
supposes that the driver is separable  in the variables $z$ and $u$.

An interesting case of application in finance is the situation where, even if the investor faces jumps in the assets dynamics, some signal revealed by these jumps can be used to construct investment strategies.  This case has been considered by Bank and Korber \cite{bank2022merton}. The allowed strategies are no longer predictable but may be also measurable with respect to the $\sigma$-algebra generated by a process that is an integral of a given function with respect to the random Poisson measure. Unfortunately, the stochastic integral of those processes w.r.t. the Poisson measure cannot be defined by the classical theory of jump processes since the jump measure is suppose to have an infinite activity. To study the utility maximization for such an extended set of allowed strategies,  the authors first show that the stochastic integral can be generalized to the considered strategies but with a modified semimartingale decomposition involving the behavior of the jump assets given the value of the signal. This original semimartingale decomposition allows to formally derive the related HJB equation related to the utility maximization problem and the solution in the power utility case is discussed. We mention that this approach has been extended to the interesting case of a multi-agent model  by Bank and Sedrakjan \cite{BS25} in the case of a finite jump activity.       
 
%$\bullet$ Then turn to the case of signal initiated by Bank and Korber \cite{bank2022merton}. Explain the problem. Cite also its extention to equilibrium of several agents also by Bank and Sedrakjan \cite{BS25}.

\vspace{2mm}

We propose in this work to study the exponential utility maximization of bounded claims in the framework of strategies including signals as considered by Bank and Korber \cite{bank2022merton}.  

The case of an exponential utility function with signals  has not been studied by the previous works  and our goal is to provide a complete solution to this problem by relying on the martingale optimality principle and BSDEs in this framework of jump signals and infinite activity.

We first present a description of the market model and we recall the extension of the stochastic integral with respect to the jump measure for strategies which may not be predictable by following Bank and Korber \cite{bank2022merton}.  This extended integral  allows in particular to define self-financing portfolios for strategies including an additional information signal. 
We next present the related utility maximization problem which involves the previously defined portfolios. 

We then use the martingale optimality principle to solve this problem. %After describing the martingale optimality principle, we look for its solution using a BSDE.
The conditions imposed by the martingale optimality principle leads to an original BSDE whose driver is quadratic in the Brownian  integrand variable  $z$  and with an exponential term involving the Poisson  integrand variable  $u$. This BSDEs looks like that studied By Morlais in \cite{MAM08,MAM09}. However,  the dependence on the signal makes the driver more tricky as it has two components. The first one is related to the case where the signal vanishes and is similar to that of \cite{MAM08,MAM09}. The second part is related to the case where the signal indeed appears and involves the conditional jump intensity of the asset given the signal  is not vanishing. 

Both of the two terms in the driver are infimum of given functions.  As we usually have for BSDEs related to utility maximization problems, if we suppose the existence of a measurable argument infimum, an optimal strategy is given by this measurable argument infimum function applied to the solution of the BSDE.

\vspace{2mm}

 In our case, the specific form of the driver provides some strict convexity property which allows to get such a measurable argument infimum function and hence an optimal strategy as soon as we are able to solve the considered BSDE.

\vspace{2mm}
 
%We then use the martingale optimality principle to solve this problem. After describing the martingale optimality principle, we look for its solution using a BSDE. The conditions imposed by the martingale optimality principle leads to an original BSDE whose driver is quadratic in the Brownian stochastic integrand variable (usually denote by $z$) and with an exponential term involving the Poisson stochastic integrand variable (usually denote by $u$). This BSDEs looks like that studied  By Morlais in \cite{MAM08,MAM09}. However,  the dependence on the signal makes the driver more tricky as it has two components. The first one is related to the case where the signal vanishes and is similar to that of \cite{MAM08,MAM09}. The second part is related to the case where the signal indeed appears. To derive this term, we need to use the conditional jump intensity of the asset given the signal non vanishing. 

Unfortunately, the BSDE we get does not fall in any of the studied framework and needs to be solved. For that we follow the classical penalization approach. We first truncate the jump measure in the neighborhood of $0$ to get a finite measure. We also truncate the driver with respect to the variables $z$ and $u$. For the variable $z$, we introduce a classical regular and symmetric truncation function to transform the quadratic term into a Lipschitz function. Contrary to  \cite{MAM08,MAM09}, we cannot use the same truncation for the variable $u$ as it does not allow to get a monotone approximation with respect to the truncation parameter. We therefore consider another truncation function which is regular but  has a truncation effect only from below.   

We then provide some uniform estimate on the penalized solution and use the stability result of Morlais \cite{MAM08} to pass to the limit and get a solution to the BSDE.
%We also provide uniqueness of the solution to the considered BSDE. 

Finally, we numerically illustrate the effect of the additional information given by the signal. For that we compute the optimal values with and without signal on two examples. The first one is the case where the signal gives information only for small jumps. This can be related to the situation where an investor knows everything about  small troubles of a given asset, and try to take advantage of this information to  anticipate big troubles. The second case  is the reverse situation: the investor does not know anything about small troubles but has information only about big ones. This second situation is maybe most spread as for big troubles of companies, the information is usually revealed by e.g. newspapers, whereas small troubles as usually hidden by managers to avoid a depreciation of the assets.

\vspace{2mm}

The rest of the paper is organized as follows.
In Section \ref{sect2}, we present the probabilistic model of market. In Section \ref{sect3}, we present the set of classical strategies and the extended set of signal strategies. We also recall the extension of the stochastic Poisson  integral for signal strategies and the related semi-martingale decomposition and define the related self-financing portfolios.
In Section \ref{sect4}, we present the portfolio exponential utility maximization, and provide the related martingale optimality principle.
Section \ref{sect5} is dedicated to the derivation of the related BSDE and to the proof of the existence of a solution. An optimal strategy is also derived from the solution to the BSDE.
Finally, Section \ref{sect6} presents some numerical experiments to measure the effect of the information brought by the jump signal.
%Finally, the proof a relegated to the appendix in Section \ref{sect7}. 
\section{Market model}\label{sect2}
\subsection{Probabilistic settings}
We consider a complete probability space \((\Omega, \mathcal{A}, \mathbb{P})\) endowed with a standard one dimensional Brownian motion \((W_t)_{t \geq 0}\) and  an independent  homogeneous Poisson measure $N$ defined on  $\mathbb{R}_+\times \mathbb{R}$. We suppose that the compensator of $N$ is \(dt \otimes \nu(de)\), where $\nu$ is a  \(\sigma\)-finite measure on $(\mathbb{R}, \mathcal{B}(\mathbb{R}))$ such that
\begin{equation*}
    \nu(\{0\}) = 0
    \qquad\text{and}\qquad
    \int_{\mathbb{R}}(1\wedge|e|^2)\,\nu(de) < +\infty.
\end{equation*}
We denote by $\tilde{N}$ the compensated random measure of  $N$ and we recall that it is given by 
\[
\tilde N(dt,de) = N(dt,de) - \nu(de)\,dt.
\]
%We assume that the stochastic processes \((W_t)_t\) and \((N_t)_t\) are independent.
We define the  filtration \(\mathbb{F}=(\mathcal{F}_t)_{t \geq 0}\)  as the completion of the natural filtration generated by $W$ and $N$. $\mathbb{F}$  is therefore given by
\beqs
\mathcal{F}_t = \sigma(W_s, N([0,s] \times A) : s \leq t, A \in \mathcal{B}(\mathbb{R}))\vee \mathcal{N}\;,\quad t\geq 0\;,
\enqs
where $\mathcal{N}$ stands for the class of $\mathbb{P}$-negligible sets, and $\mathbb{F}$ 
and satisfies the usual conditions (i.e. right-continuity and $\mathcal{F}_0$ contains the $\mathbb{P}$-null sets).

Let \(\mathcal{P} = \sigma((Y_t)_t: Y \text{ is a continuous, } \mathbb{F}\text{-adapted process})\) denote the \(\sigma\)-algebra of predictable processes.

\subsection{The financial market}

We consider a financial market over a time period $[0,T]$  where $T>0$ is the deterministic time horizon. We suppose that this financial market is composed by two assets. The first one is a riskless  asset that we assume for simplicity to have a zero interest rate. 
%
%
%driven by a riskless asset $S^0$ at the constant rate $r \in \mathbb{R}$ and a risky asset $S^1$ driving both by the Brownian motion $W$ and the Poisson measure $N$. \\Their dynamic over some finite horizon $[0,T]$ is as follow
%\begin{itemize}
%    \item The first asset is a riskless asset $S^0$ with initial value $s_0>0$ and constant interest rate $r>0$. Its price process $(S^0_t)_{t\in[0,T]}$ satisfies the following SDE
%    \beqs
%S^{0}_{t} & = & s^{0}+\int_0^t rS_{u-}^{0}du, \quad  t \in [0, T]\;.
%\enqs
 % $s^0$ and $r$ are two constants representing the initial value of $S^0$ and the interest and satisfy $s^0>0$ and $r\geq0$.   
    %\item 
    The second asset is a risky asset denoted by $S^{}$. Its price process $(S_t)_{t\in[0,T]}$ satisfies the following SDE
\beqs
S^{}_t & = & s^{}_0+ \int_0^tS^{}_{u-}[ \kappa du + \sigma dW_u + \int_{\R}\eta(e) \tilde{N}(du, de)],\qquad  t\in[0, T]\;.
\enqs
The constants $s_0>0$, $\kappa \in \mathbb{R}$  and  $\sigma >0$ represent respectively the initial value of $S$, the drift and the volatility of the asset $S$. The function $\eta: \R\xrightarrow{}\mathbb{R}$ is a Borel map  specifying the jumps of the risky asset. We suppose that there exists a constant $C$ such that 
\beq\label{condetanu1}
|\eta (e)| & \leq &  C(1\wedge |e|)
\enq
for all $e\in\R$. Under the previous condition, we have existence and uniqueness of the processes $S$ (see e.g. \cite[Chap.  III Theorem 2.32]{jacod2013limit}). We assume in addition that $\nu(\{\eta<-1\}) = 0$ to ensure that the process $S$ remains nonnegative.

For later use, we shall also assume that 
\beq
\int_{\R}|\eta(e)|  \nu(de)  & < & \infty.\label{condetanu2}
\enq
  
%\end{itemize}

 %and $$ \nu(\{|\eta| >1\}) = 0 \text{ (for simplicity in the computations).}$$
%\red{donner ref existence unicité sol}
\section{Investment strategies and related wealth}\label{sect3}
\subsection{Classical admissible strategies}
To describe the investor strategy, we denote by $\pi_t$  the wealth invested in the risky asset $S$ at time $t\in[0,T]$. % of the investor's investment strategies and take values from a compact set $\mathcal{C}$ containing 0.
We consider  strategies  $\pi=(\pi_t)_{t\in[0,T]}$ that are $\Pc$-measurable processes and satisfy 
\beq\label{cond-Adm-pi}
%\mathbb{E}(
\int_{0}^{T}|\pi_{t}|^2dt %+ \int_{0}^{T}\int_\R |\pi_t \eta(e)|^2\nu(de)
%)
  & < &  +\infty\;,\quad  \P-a.s.
\enq
A strategy $\pi=(\pi_t)_{t\in[0,T]}$ that is $\Pc$-measurable and  satisfies \eqref{cond-Adm-pi} is said to be admissible. We denote by $\Ac$ the set of admissible strategies.
For a given initial wealth $x\in \R$ and an investment strategy $\pi=(\pi_t)_{t\in[0,T]}\in \Ac$, we denote by $X^{x,\pi}_t$ the resulting self financing wealth at time $t\in[0,T]$. The process $X^{x,\pi}$ satisfies the following self-financing dynamics
\beq\label{defDynX}
X^{x,\pi}_t  =  x+\int_0^t\frac{\pi_u}{S_{u-}}dS_u%+\int_0^t\frac{X^{x,\pi}_{u-}-\pi_u}{S_{u-}^0}dS^0_u 
\;,\quad t\in[0,T]\;.
\enq
We notice that $X^{x,\pi}$ is well defined for $\pi\in \Ac$ according to condition \eqref{cond-Adm-pi}. 
%Denote by $\tilde X^{x,\pi}$, $\tilde S$ and $\tilde{\pi}$ the discounted values of $X^{x,\pi}$ ,$S$ and $\pi$ respectively. A standard application of It\^o's formula shows that 
%\beqs
%\tilde X^{x,\pi}_t & = & x+\int_0^t\frac{\tilde{\pi}_u}{\tilde S_{u-}}d\tilde S_u \\
% & = &  x+\int_0^t \tilde{\pi}_u[(\kappa-r) du + \sigma dW_u + \int_{\R}\eta(e) \tilde{N}(du, de)] 
%\enqs
%for $t\in[0,T]$. We notice that $\tilde X^{x,\pi}$ and $X^{x,\pi}$ are well defined for $\pi\in \Ac$ according to condition \eqref{cond-Adm-pi}. 
%Without loss of generality and for simplicity, we will take $r=0$ in the rest of paper.\\
%In this case, $\pi$ is measurable with respect to $\mathcal{P}$ and the resulting self-financing portfolio of the investor $(X_{t}^{x, \pi})_t$ evolves according to :
%\beqsX_{0}^{x, \pi} = x >0
%,
%\enqs
%\beqs
%dX_{t}^{x, \pi} = \pi_{t}\frac{dS^{1}_{t}}{S^{1}_{t-}} = \pi_{t}[\kappa dt + \sigma dW_t + \int_{\R}\eta(e) \tilde{N}(dt, de)]
%\enqs

\subsection{Portfolio with signal strategies} We consider investment strategies in the case where the investor has access to an extra information on the stock $S$.  More precisely, we suppose that
the extra information on the stock is given by an impending signal process $G$ defined by
\beqs
G_{t}  & = & \int_{0}^{t} \int_{\R}\gamma(e) \tilde{N}(ds, de)\;,\quad t \geq 0\;,
\enqs
where
 $\gamma:\R \xrightarrow{}{\mathbb{R}}$ is a Borel map such that there exists a constant $C$ satisfying 
 \beqs
% \int_{\R}(\gamma(\R)^2+ |\gamma(\R)| )\nu(de) & < & \infty.
|\gamma(e)| & \leq & C(1\wedge |e|)
 \enqs
 for all $e\in \R  $. We also assume that $\gamma(\R)$ is a Borel subset of $\R$. %\textcolor{red}{We shall assume that $\gamma(\R)$ is an open subset of $\R$.}
 
We now consider the set  of strategies including this additional information given by the signal. Namely, we would like to consider strategies 
\beqs
\pi=(\pi_t)_{t\in[0,T]} \text{ that are }\mathcal{P} \vee \sigma(G)\text{-measurable and satisfy  } \eqref{cond-Adm-pi}\;.
\enqs
%$\pi=(\pi_t)_{t\in[0,T]}$ which are $\mathcal{P} \vee \sigma(Z)$-measurabe
%In this second case, the strategies $\pi$ are allowed to be mesurable with respect to the larger $\sigma-$ field $$\mathcal{P} \vee \sigma(Z).$$\\ 
By \cite[Lemma 2.2]{bank2022merton} a process $\pi$ is $\mathcal{P} \vee \sigma(G)$-measurable if and only if there exists a map $p:~\R_+\times\Omega\times\R\rightarrow\R$ which is 
 $\mathcal{P} \otimes \mathcal{B}(\mathbb R)$-measurable, such that
\beqs
\pi_t & = & p_t(\Delta_t G)\;,\quad t\geq0 ~\P-a.s.
\enqs
where $\Delta_t G=G_t-G_{t-}$ for $t\geq0$.
In particular, we can rewrite the set of such strategies as
\beqs
 \text{Processes }(\pi_t=p_t(\Delta G_t))_{t\in[0,T]} \text{ satisfying  } \eqref{cond-Adm-pi} \mbox{ with } p  \text{ being }\mathcal{P} \otimes \Bc(\R)\text{-measurable} \;.
\enqs
For such a strategy $\pi$, we would like to define a portfolio value process $X^{x,\pi}$ satisfying \eqref{defDynX}. To do this, we need to define the stochastic integral of $\pi$ w.r.t. $N$. Such a stochastic integral is indeed only defined for $\Pc$-measurable processes, whereas, the considered $\pi$ has extra measurability.

We define the kernel $K:~\R\times \Bc(\R)\rightarrow \R $ such that 
\begin{equation}
\nu\big(de \cap \{\gamma \neq 0\}\big) = \int_{\gamma(\R) \setminus\{0\}} K(g,de)\mu(dg),  
\end{equation}
 where $\mu$ is the measure image of $\nu $ by $\gamma$, $i.e.$ $\mu = \nu \circ \gamma^{-1}$. To get such a kernel $K$, we consider the measure $M$ defined on $\Bc(\R)\otimes \Bc(\R)$ by
 \begin{equation}
M(A\times B)= \nu\big(\gamma^{-1}(A\setminus \{0\})\cap B\big)
 \end{equation}
 for any $A\in \Bc(\R)$ and $B\in\Bc(\R)$. Applying \cite[Chapter II, Paragraph 1.2]{jacod2013limit}, we get the existence of such a kernel $K$ as the first marginal of $M$ is  $\mu\mathds{1}_{\gamma(\R)\setminus\{0\}}$. Moreover, we have $K(g,\R)=K(g,\{\gamma=g\})=1$ for all $g\in \R\setminus\{0\}$.

 Let us then introduce the average jump size $\hat \eta(g)$ given a signal $g$ and defined by  
 \beqs
 \hat{\eta}(g) = \int_{\{\gamma = g\}} \eta(e) K(g, de) 
 \enqs
 for $g \in \gamma(\R) \setminus{\{0\}}$. %
% with the convention $\hat{\eta}(g) =0$ for $g$ such that $K(g,\R)=0$.
We also define the related variance function $v_\eta$ by
\begin{equation}
v_{\eta}(g)  =  \int_{\{\gamma = g\}} |\eta(e)-\hat \eta(g)|^2 K(g, de) 
\end{equation}
for $g\in \gamma(\R) \setminus{\{0\}}$.
We notice that \reff{condetanu1} and \reff{condetanu2} implies that  $\hat \eta$ is well defined and satisfies
\beq\label{condAdmContSignal1}
\int_{\gamma(\R) \setminus\{0\}} |\hat \eta(g)|\mu(dg) & < & +\infty\;, \\ \label{condAdmContSignal2}
\int_{\gamma(\R) \setminus\{0\}} v_\eta(g)\mu(dg) & < & +\infty\;.  
\enq
These conditions allow to define the stochastic integrals of $\Pc\vee \sigma(G)$-measurable processes w.r.t. $\tilde N$ as done by \cite[Theorem 2.4]{bank2022merton}. We present a version of this result adapted to our framework. To this end, we introduce the following integrability condition
\beq\label{cond-loc-smg}
\int_0^T\left(p_s(0)^2+\int_{\gamma(\R)\setminus\{0\}}\left(|p_s(g)||\hat \eta (g)|+p_s(g)^2v_\eta(g)\right)\mu(dg)\right)ds & < & +\infty\;,\quad \P-a.s.
\enq
and we
define the set 
\beqs
\Ic(G) & = & \Big\{\text{Processes }(\pi_t=p_t(\Delta G_t))_{t\in[0,T]} \mbox{ with } p  \text{ being }\mathcal{P} \otimes \Bc(\R)\text{-measurable  satisfying } \eqref{cond-loc-smg} \Big\}.
\enqs
The extension of the stochastic integral w.r.t. $\tilde N$ of $\Pc\vee \sigma(G)$-measurable processes is stated as follows. 
\begin{Theorem}\label{extentionISN} There exists a unique continuous linear map $I$ from $\Ic(G)$ to the set of $\F$-adapted c\`adl\`ag processes such that 
\beqs
I_t(\pi) & = & \int_0^t \int_{\gamma\neq0}\pi_s\eta(e)\tilde N(de,ds)\;,\quad t\in[0,T] \;,
\enqs
for any $\pi\in \Ic(G)$. Continuity is understood in the following sense
\beqs
I(\pi^n) & \xrightarrow[n\rightarrow+\infty]{} & I(\pi)
\enqs uniformly in probability for any $\pi=p(\Delta G)\in \Ic(G)$ and  $\pi^n=p^n(\Delta G)\in \Ic(G)$, $n\geq 1$, such that
\beqs
\int_0^T\left((p_s-p^n_s)^2(0)+\int_{\gamma(\R)\setminus\{0\}}\hspace{-4mm}\left(|(p_s-p^n_s)(g)||\hat \eta (g)|+(p_s-p^n_s)^2(g)v_\eta(g)\right)\mu(dg)\right)ds & \xrightarrow[n\rightarrow+\infty]{\P} & 0.
\enqs
For $\pi=p(\Delta G)\in \Ic(G)$, the process $I(\pi)$ is a special semimartingale admitting the following Doob-Meyer decomposition 
\beq%gin{equation}
\label{decompSMIS}
 I_t(\pi) & = & M^1_t(p)+M^2_t(p)+A_t(p) 
\enq
where $M^1(p)$ and $M^2(p)$ are local martingales given by
\beqs
    M^{1}_t(p) & = & \int_0^t\int_{\gamma\neq0}p_s(\gamma(e))\hat \eta (\gamma(e))\tilde N(ds,de)\\
     M^{2}_t(p) & = & \int_0^t\int_{\gamma\neq0}p_s(\gamma(e))\big(\eta (e)-\hat \eta (\gamma(e))\big) N(ds,de)
\enqs
for $t\in[0,T]$ and $A(p)$ is a finite variation process given by
\beqs
    A_t(p) & = & \int_0^t\int_{\gamma\neq0} (p_s(g)-p_s(0))\hat \eta (g)\mu(dg)ds
\enqs
for $t\in[0,T]$.
If in addition the expectation of the integral appearing in \eqref{cond-loc-smg} is finite, then $M^1$ is a martingale with finite
expected total variation over $[0,T]$, $M^2$ is a square-integrable martingale with $L^2$-norm
\beqs
 \E\big[|M^2_T|^2\big]  & = & \E\Big[ \int_0^T
 \int_{\gamma(\R)\setminus\{0\}}|p_s(g)|^2v_\eta(g)\mu(dg)ds\Big]\;,
\enqs and 
$A$ has finite expected total variation
over $[0,T]$.
\end{Theorem}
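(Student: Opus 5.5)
The plan is to construct $I$ on a dense class where the integral is classically defined, verify the decomposition \eqref{decompSMIS} there, and extend by continuity using estimates on the three terms. This follows the proof of \cite[Theorem 2.4]{bank2022merton}.

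\emph{Step 1 (simple integrands).} Take $\pi$ with $p_s(g)=\sum_{i} \xi_i \mathds{1}_{(t_i,t_{i+1}]}(s) h_i(g)$, where each $\xi_i$ is $\mathcal{F}_{t_i}$-measurable and bounded and each $h_i$ is bounded Borel. Add the requirement that $h_i-h_i(0)$ vanishes outside a set $\{|g|>\epsilon\}$, which has finite $\mu$-measure. On such a set $N$ has finitely many atoms on $[0,T]$, so the integral against $N(ds,de)$ of $\pi_s\eta(e)$ is pathwise a finite sum. The part $p_s(0)$ is predictable, and we integrate it against $\tilde N$ in the usual way. We then set
\[
I_t(\pi)=\int_0^t\int_{\gamma\neq0} p_s(\gamma(e))\eta(e)\,N(ds,de)-\int_0^t\int_{\gamma\neq 0}p_s(0)\eta(e)\,\nu(de)ds .
\]
The justification is that on the event $\{\Delta N\neq 0 \text{ at }(s,e)\text{ with }\gamma(e)\neq0\}$ we have $\Delta_sG=\gamma(e)$ a.s. (no simultaneous jumps), while off the jump times $\Delta_sG=0$ and hence $\pi_s=p_s(0)$. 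Now split $\eta(e)=\hat\eta(\gamma(e))+(\eta(e)-\hat\eta(\gamma(e)))$, and write
\[
\int p_s(\gamma(e))\hat\eta(\gamma(e))\,N(ds,de)=M^1+\int\!\!\int p_s(\gamma(e))\hat\eta(\gamma(e))\,\nu(de)ds .
\]
Since $p_s(\cdot)$ is predictable in $(s,\omega)$ and a function of $\gamma(e)$, the disintegration $\nu=\int K(g,\cdot)\mu(dg)$ turns $\int p_s(\gamma(e))\hat\eta(\gamma(e))\nu(de)$ into $\int p_s(g)\hat\eta(g)\mu(dg)$. Also $\int p_s(0)\eta\,\nu(de)=p_s(0)\int\hat\eta\,d\mu$, which gives $A(p)$.

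\emph{Step 2 (martingale property of $M^2$).} Its compensator is
\[
\int p_s(\gamma(e))\big(\eta(e)-\hat\eta(\gamma(e))\big)\nu(de)=\int p_s(g)\Big(\int(\eta-\hat\eta(g))\,K(g,de)\Big)\mu(dg)=0 ,
\]
so $M^2=\int p(\gamma)(\eta-\hat\eta(\gamma))\,\tilde N$ is a purely discontinuous martingale. The same disintegration gives $\langle M^2\rangle_T=\int\!\!\int p_s(g)^2 v_\eta(g)\,\mu(dg)ds$, which yields the $L^2$ identity. For $M^1$ and $A$, the total variation of $M^1$ is bounded by $\int\!\!\int|p||\hat\eta|\,(N+\nu\,ds)$, with expectation $2\E\int\!\!\int|p_s(g)||\hat\eta(g)|\,\mu(dg)ds$. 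The variation of $A$ is bounded by $\int\!\!\int(|p_s(g)|+|p_s(0)|)|\hat\eta|\,\mu\,ds$, and this is controlled using \eqref{condAdmContSignal1} together with Cauchy--Schwarz on $\int_0^T|p_s(0)|ds$. These are exactly the norms appearing in \eqref{cond-loc-smg}.

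\emph{Step 3 (extension).} The estimates of Step 2 give continuity from the \eqref{cond-loc-smg} seminorm to the ucp topology, via Lenglart's inequality for $M^2$ and the plain variation bounds for $M^1$ and $A$. The $p(0)$-Brownian-free compensated part is handled by the standard BDG/Lenglart bound. Localization by stopping times reduces the a.s.-finite condition to the finite-expectation case. Next, a monotone class and truncation argument shows that simple integrands are dense in $\Ic(G)$ for this seminorm: truncate $p$ in value, cut to $\{|g|>\epsilon\}$ off $g=0$, and approximate predictable processes by simple ones. The map $I$ therefore extends uniquely, and the decomposition \eqref{decompSMIS} passes to the limit because each of $M^1,M^2,A$ is itself continuous in the corresponding seminorm. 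Uniqueness follows from density.

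The main obstacle is the density and identification step. One must check that the approximating $M^1(p^n)$ converges when only $\int|p||\hat\eta|\,d\mu$ is controlled, and that the limit does not depend on the chosen sequence. A further point is that a.s. $\Delta_sG$ recovers $\gamma(e)$ at the jumps, so that $\pi_s=p_s(\gamma(e))$ $N$-a.e. This requires $\gamma(e)\neq0$ on the integration region and no accumulation issue, and the latter is handled by working first on $\{|\gamma|>\epsilon\}$.
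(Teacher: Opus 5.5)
Your proposal is correct and takes the same route as the paper, which omits the proof and defers to \cite[Theorem 2.4]{bank2022merton}. That route defines $I$ on simple signal strategies whose signal dependence is supported on $\{|g|>\epsilon\}$, identifies $M^1+M^2+A$ through the image measure $\mu$ and the kernel $K$, and extends by continuity in the seminorm of \eqref{cond-loc-smg}. The one adaptation needed for integrating over all jumps, rather than jumps of size at most one, sits in the cut-off step of your density argument: $\int_{0<|g|\le\epsilon}|\hat\eta|\,d\mu\to0$ and $\int_{0<|g|\le\epsilon}v_\eta\,d\mu\to0$ follow from \eqref{condAdmContSignal1}--\eqref{condAdmContSignal2}, so you should cite these there.
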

In the sequel, we still denote the extended stochastic integral $I_t(\pi)$ by
%\beqs
% I_t(\pi) & = & 
$\int_0^t\int_{\gamma\neq 0} \pi_s\eta(e)\tilde N(ds,de)
$ % \enqs
for all $t\in[0,T]$ and $\pi\in\Ic(G)$.

 We notice that the only difference between Theorem \ref{extentionISN} and \cite[Theorem 2.4]{bank2022merton} is that we consider a stochastic integral over all possible jumps whereas  \cite[Theorem 2.4]{bank2022merton} considers only jumps with size smaller than one. However, the same arguments can be applied for the proof which is therefore omitted.

Finally, we shall consider the set of admissible strategies with signal $\Ac_{\mathrm{sgn}}$ defined by
\beqs
\Ac_{\mathrm{sgn}} & = & \Big\{\text{Processes }(\pi_t=p_t(\Delta G_t))_{t\in[0,T]} \text{ satisfying  } \eqref{cond-Adm-pi}-\eqref{cond-loc-smg} \mbox{ with } p  \text{ being }\mathcal{P} \otimes \Bc(\R)\text{-measurable}\Big\} \;.
\enqs
For $\pi\in \Ac_{\mathrm{sgn}}$ and for an initial endowment $x\in\R$, the self financing wealth process $ X^{x,\pi}$ defined by $ X_0^{x,\pi}=x$ and 
\begin{equation*}
           d X_{t}^{x, \pi}  %= \pi_{t}[\kappa dt + \sigma dW_t + \int_{\R}\eta(e) \tilde{N}(dt, de)] \\
           = {\pi}_t[\kappa dt + \sigma dW_t]  + \int_{\R} {\pi}_t \eta(e) \tilde{N}(dt, de),\quad  t\in [0, T],
\end{equation*}
is then well posed by Theorem \ref{extentionISN}.

\section{Exponential utility maximization}\label{sect4}
\subsection{The utility maximization problem}

We use the notation $u$ both for the utility function and as a dummy variable replacing the usual $U$ term in BSDEJs, for example in~\eqref{defDriverfexp}. This dual use may lead to ambiguity, but these are standard notations and the intended meaning should be clear from the context.

We %fix a utility function $u:~\R\rightarrow\R$ we
 consider an exponential utility function $u:~\R\rightarrow\R$ given by 
\begin{equation}
    u(x) = -\exp(-\lambda x)\;,\quad x\in\R\;,
\end{equation}
where $\lambda$ is a given positive constant which quantify the risk aversion of the investor. 

We also consider an $\Fc_T$-measurable random variable $F$ representing the payoff of some financial product. We look for the optimal expected utility for an investor selling the product $F$. This remains to maximize $\E\big[u(X^{x,\pi}_T - F)\big]$ over the set of allowed strategies. 

We then denote by $\bar \Ac_{\mathrm{sgn}}$  the subset of $\Ac_{\mathrm{sgn}}$ (resp.  $\bar \Ac$  the subset of $\Ac$) representing the set of allowed strategies with signal (resp. without signal). To define this set, we shall also impose an additional condition %of limited credit line 
on $\pi$. More precisely, we fix some constants $\overline \pi, \underline \pi>0$ and we ask $\pi$ to satisfy
\beq
\label{limited-credit-line}
\pi_t & \in & [-\underline \pi,\overline \pi] \quad \mbox{ for all }t\in[0,T]\;,~\P-a.s.
\enq%\textcolor{red}{put this condition later}
We observe that under \reff{limited-credit-line}, conditions \eqref{cond-Adm-pi} and \eqref{cond-loc-smg} are satisfied. Therefore, the sets  $\bar \Ac_{\mathrm{sgn}}$ and $\bar \Ac$  are therefore given by
\beqs
    \bar \Ac_{\mathrm{sgn}} & = & \Big\{ \text{Processes }\pi=(p_t(\Delta G_t)_{t\in[0,T]} %\text{ that are }
    ~\mathcal{P} \vee \sigma(G)\text{-measurable and satisfy  } %\eqref{cond-Adm-pi}-\eqref{cond-loc-smg}-
    \eqref{limited-credit-line}\Big\}\;,\\
     \bar \Ac & = & \Big\{ \text{Processes }\pi
    ~~\mathcal{P} \text{-measurable and satisfy  } %\eqref{cond-Adm-pi}-
    \eqref{limited-credit-line}\Big\}\;.
\enqs
We then define %the subset $\bar \Ac_{\mathrm{sgn}}$ of $\Ac_{\mathrm{sgn}}$ (resp.  $\bar \Ac$ of $\Ac$) representing the set of allowed strategies. %This set $\bar \Ac_{\mathrm{sgn}}$ (resp. $\bar \Ac$) may depend on the utility function we shall consider.
%
%Our goal is to compute 
the value functions $V$ and $V_{\mathrm{sgn}}$ by
\beq\label{defVw(x)}
    V(x) & = & \sup_{\pi \in \bar\Ac}\E\big[u(X^{x,\pi}_T - F)\big]\;,
\enq 
and
\beq\label{defV(x)}
    V_{\mathrm{sgn}}(x) & = & \sup_{\pi \in \bar\Ac_{\mathrm{sgn}}}\E\big[u(X^{x,\pi}_T - F)\big]\;,
\enq
for $x\in\R$. We refer to \cite{MAM08} for the study of the utility maximization problem without signal  \reff{defVw(x)} and we shall focus on the study of the utility maximization problem with signal \reff{defV(x)}.

For a given initial endowment $x\in\R$, we shall also look for  a related optimal strategy $\pi^*\in \bar \Ac_{\mathrm{sgn}}$, that is a strategy $\pi^*\in \bar \Ac_{\mathrm{sgn}}$ such that 
\begin{equation}\label{defOI}
    V_{\mathrm{sgn}}(x) = \E\big[u(X^{x,\pi^*}_T - F)\big].
\end{equation}
\subsection{Martingale optimality principle}
To solve problem \eqref{defV(x)}-\eqref{defOI}, we follow the martingale optimality approach presented in \cite{hu2005utility}. This approach consists in the characterization of the optimality by a martingale criterion as shown by the following result.
\begin{Proposition}\label{MgOptPr}
    Suppose that there exists a family of processes $(R^\pi)_{\pi\in \bar \Ac_{\mathrm{sgn}}}$ satisfying the following conditions. 
\begin{itemize}
\item[\textnormal{(i)}] $R_T^\pi= u(X^{x,\pi}_T-F)$ for any $\pi\in \bar \Ac_{\mathrm{sgn}}$.
\item[\textnormal{(ii)}] There is some constant $R_0$ such that $R^\pi_0=R_0$ for any $\pi\in\bar \Ac_{\mathrm{sgn}}$.
\item[\textnormal{(iii)}] The process $(R^\pi_t)_{t\in[0,T]}$ is a supermartingale for any $\pi\in \bar \Ac_{\mathrm{sgn}}$.
\item[\textnormal{(iv)}] There is some $\pi^*\in\bar \Ac_{\mathrm{sgn}}$ such that $(R^{\pi^*}_t)_{t\in[0,T]}$ is a martingale.
\end{itemize}
Then $R_0=V_{\mathrm{sgn}}(x)$ and $\pi^*$ is an optimal investment strategy.
\end{Proposition}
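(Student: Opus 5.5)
The argument is the standard martingale optimality principle, as in \cite{hu2005utility}: compare expectations at times $0$ and $T$ using (i)--(iv). First fix an arbitrary $\pi\in\bar\Ac_{\mathrm{sgn}}$. By (iii), $R^\pi$ is a supermartingale, so $\E[R^\pi_T]\le \E[R^\pi_0]$. Condition (ii) gives $R^\pi_0=R_0$, a deterministic constant. Condition (i) identifies $R^\pi_T$ with $u(X^{x,\pi}_T-F)$. Together these give
\begin{equation*}
\E\big[u(X^{x,\pi}_T-F)\big]\;\le\; R_0\;.
\end{equation*}
Taking the supremum over $\pi\in\bar\Ac_{\mathrm{sgn}}$ yields $V_{\mathrm{sgn}}(x)\le R_0$. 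Here the definition \eqref{defV(x)} is used, and the value $V_{\mathrm{sgn}}(x)$ is well defined as a supremum in $[-\infty,0]$ since $u\le 0$.

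Next I take the strategy $\pi^*$ from (iv). Since $R^{\pi^*}$ is a true martingale, $\E[R^{\pi^*}_T]=R^{\pi^*}_0=R_0$. By (i), this reads $\E[u(X^{x,\pi^*}_T-F)]=R_0$. Because $\pi^*\in\bar\Ac_{\mathrm{sgn}}$, we get $R_0\le V_{\mathrm{sgn}}(x)$. Combining with the first step gives $R_0=V_{\mathrm{sgn}}(x)=\E[u(X^{x,\pi^*}_T-F)]$, which is exactly \eqref{defOI}. Hence $\pi^*$ is optimal.

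The only delicate point is integrability. The supermartingale property in (iii) is understood to include $R^\pi_t\in L^1$, and the martingale property in (iv) likewise. So all the expectations above are finite and the inequalities are meaningful, with no additional argument needed. I would also note explicitly that $R_0$ is deterministic, which is guaranteed by (ii), and that $\Fc_0$ is trivial up to null sets. These two facts make $\E[R^\pi_0]=R_0$ and give a value independent of $\pi$. There is no real obstacle here: the substantive work lies in constructing such a family $(R^\pi)$, which the paper carries out later via a BSDE, and not in this verification.
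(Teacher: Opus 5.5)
Your proposal is correct and follows the paper's own proof: (i)--(iii) give $\E[u(X^{x,\pi}_T-F)]\le R_0$ for every $\pi\in\bar\Ac_{\mathrm{sgn}}$, hence $V_{\mathrm{sgn}}(x)\le R_0$. The martingale property (iv) of $R^{\pi^*}$ then shows that $\pi^*$ attains $R_0$.
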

\begin{proof}
    Fix some $\pi\in \bar \Ac_{\mathrm{sgn}}$. We then have from (i), (ii) and (iii)
    \beqs
\E[u(X^{x,\pi}_T-F)] & = & \E[R^{\pi}_T]~\leq~R^{\pi}_0~=~R_0\;.
    \enqs
    Therefore, we get
    \beqs
    \sup_{\pi\in\bar \Ac_{\mathrm{sgn}}}\E\big[u(X^{x,\pi}_T-F)\big]~\leq~R_0\;.
    \enqs
    Using (iv), we get
    \beqs
    \E\big[u(X^{x,\pi^*}_T-F)\big]~=~R_0\;.
    \enqs
    Hence $\pi^*$ is optimal and $V_{\mathrm{sgn}}(x)=R_0$\;.
\end{proof}
\section{Optimality and BSDEs with jumps}\label{sect5}
In the sequel, we construct  a solution to problem \eqref{defV(x)}-\eqref{defOI} based on a family $(R^\pi)_{\pi\in \bar \Ac_{\mathrm{sgn}}}$ satisfying Proposition \ref{MgOptPr}. 

For this purpose, we use the theory of Backward SDEs with jumps (BSDEJ) for short. We next present the notion of BSDEJ and we refer to \cite{delong13} for a detailed presentation of this theory.  
We introduce the following spaces of processes. 
%\vspace{10mm}

%We first introduce some notations:

\begin{itemize}
    \item \( S^{\infty} = \left\{ \F\text{-adapted c\`adl\`ag processes } Y  \text{ valued in $\R$ such that } \esssup_{t \in [0,T]} |Y_t| < \infty \right\} \).

    \item \( S^{2} = \left\{ \F\text{-adapted c\`adl\`ag processes } Y  \text{ valued in $\R$ such that } \E ( \sup_{t \in [0,T]} |Y_t|^2 ) < \infty \right\} \).
    
    \item \( L^2(W) = \left\{\Pc\text{-measurable processes } Z \text{ valued in $\R$ such that } \mathbb{E} \left[\int_0^T |Z_t|^2 \, dt \right] < \infty \right\} \).
    
    \item \( L^2(\tilde{N}) = \left\{\mathcal{P} \otimes \mathcal{B}(\R)\text{-measurable processes } U \text{ valued in $\R$ such that } \mathbb{E} \left[\int_0^T \int_\R U_t^2(e) \, \nu(de) \, dt \right] < \infty \right\} \).

 \item \( L^\infty(\tilde{N}) = \big\{\mathcal{P} \otimes \mathcal{B}(\R)\text{-measurable processes } U \text{ valued in $\R$ such that } \P\otimes\nu( |U|> C)=0\mbox{ for some } C\in\R \big\} \).
\end{itemize}

We also define the following sets of functions.
\begin{itemize}
    \item $L^2(\nu) = \{ u : \R \xrightarrow{} \mathbb{R} \mbox{ Borel, } \int_{\R}u^2(e)\nu(de) <\infty \}$. 
    \item $L^{\infty}(\nu)=\{ u : \R \xrightarrow{} \mathbb{R} \mbox{ Borel, } \nu(|u|>C) =0 \mbox{  for some } C\in\R \}$. %as all the Borel function $u:\R\xrightarrow{}\mathbb{R}$ with bounded values $\nu$-a.e.  and equipped with the topology of convergence in measure. 
\end{itemize}
We next fix a terminal condition given an $\Fc_T$-measurable random variable $\xi$ and a function $f:~\Omega\times [0,T]\times \R\times \R^d\times L^2(\nu)$ that is assumed to be $\Pc\otimes\Bc(\R)\otimes\Bc(\R^d)\otimes\Bc(L^2(\nu))$-measurable.

A solution to the BSDEJ with parameters $(\xi,f)$ is a triple of processes $(Y, Z, U)\in S^\infty \times L^2(W) \times L^2(\tilde{N})$ satisfying 
\begin{equation}
Y_t  =  \xi + \int_{t}^{T} f(s, Y_s, Z_s, U_s) \, ds 
- \int_{t}^{T} Z_s \, dW_s 
- \int_{t}^{T} \int_{\R} U_s(e) \, \tilde{N}(ds,de), \quad t \in [0, T]\;.\label{BSDEJ}
 %& = & 
\end{equation}
%which is characterized by a bounded terminal condition  $F$ and a generator $f$ satisfying $$\int_{0}^{T}|f(s, Y_s, Z_s, U_s)|ds < \infty, \mathbb{P}-\mbox{as},$$is a triple of processes $(Y, Z, U)$ which is in $S^\infty \times L^2(W) \times L^2(\bar{N})$.

Following the approach initiated by \cite{hu2005utility} in the Brownian framework and then developed by \cite{MAM08} in the mixed Brownian-Poisson case, we look for the family $(R^\pi_t)_{t\in[0,T]}^{\pi\in \bar\Ac_{\mathrm{sgn}}}$ satisfying Proposition \ref{MgOptPr} under the following form
\beqs
R^\pi_t & = & u\big(X^{\pi}_t-Y_t\big)\;,\quad t\in[0,T]\;,
\enqs
where $(Y_t)_{t\in[0,T]}$ satisfies a BSDE with some parameters $(\xi,f)$. We next face two questions. The first one consists in finding appropriate $\xi$ and $f$ to satisfy conditions (i) to (iv) of Proposition \ref{MgOptPr}. Given, this appropriate coefficient, the second question is to prove existence, and possibly uniqueness, of solution to the considered BSDEJs. 

%\section{Exponential utility case}
%Right now, considering the utility maximization problem. More precisely, given a utility function $u:\mathbb{R}\xrightarrow{} \mathbb{R}$ and $F$ a contingent claim, that is a random payoff at time $T$ supposed to be $\mathcal{F}_T$-measurable and bounded, we define $$V(x) = \sup_{\pi\in A} \mathbb{E}[u(X_{T}^{x,\pi} - F)]$$
%the maximal expected utility we can achieve by starting at time $0$ with initial capital $x$ using some strategy $\pi \in A$. \\
\subsection{Related BSDEJ}
%In this section, we consider an exponential utility function $u$ given by 
%\begin{equation}
%    u(x) = -\exp(-\lambda x)\;,\quad x\in\R\;,
%\end{equation}
%where $\lambda$ is a given positive constant which %quantify the risk aversion of the investor.

\vspace{2mm}

For $u \in L^2(\nu)$ and $\lambda > 0$, we define the following non-negative functional:
\beqs
|u|_{\lambda} := \int_\R h_{\lambda}(u(e))\, \nu(de),
\enqs where $h_\lambda$ is the function defined by $$h_\lambda(x) = \frac{e^{\lambda x} - \lambda x - 1}{\lambda}$$ for all $x\in\R$.

%\vspace{2mm}
%We consider the case where the 'estimation' of the jumps of the asset given the signal is negative, that is
%\beqs
%\hat \eta (g) & < & 0
%\enqs
%for $\mu$-a.a. $g\in\gamma(\R)\setminus\{0\}$.

%In this case the optimal strategy might be to have a negative investment on the risky asset $S$. Therefore, 
%We shall also impose an additional condition %of limited credit line 
%on $\pi$. More precisely, we fix some constants $\overline \pi, \underline \pi>0$ and we ask $\pi$ to satisfy
%\begin{equation}
%\label{limited-credit-line}
%\pi_t  \in  [-\underline \pi,\overline \pi] \quad \mbox{ for all }t\in[0,T]\;,~\P-a.s.
%\end{equation}%\textcolor{red}{put this condition later}
%In this section, our set of admissible strategies $\bar %\Ac_{\mathrm{sgn}}$ is therefore given by
%\begin{equation}
%    \bar \Ac_{\mathrm{sgn}} = \Big\{ \text{Processes }\pi=(p_t(\Delta G_t)_{t\in[0,T]} %\text{ that are }
%    ~\mathcal{P} \vee \sigma(G)\text{-measurable and satisfy  } \eqref{cond-Adm-pi}-\eqref{cond-loc-smg}-\eqref{limited-credit-line}\Big\}\;.
%\end{equation}
%To characterize the value function $V(x)$ and the optimal strategy, we construct as in [Hu, Imkeller, Muller] a family of stochastic processes $R^{(\pi)}$ such that:

%\begin{enumerate}
%    \item \( R^{(\pi)}_{T} = -\exp(-\lambda(X^{x,\pi}_{T} - F)) \) for all \( \pi \in A \).
%    \item \( R^{(\pi)}_{0} = R_0 \) is constant for all \( \pi \in A \).
%    \item \( R^{(\pi)} \) is a supermartingale for all \( \pi \in A \), and there exists a \( \hat{\pi} \in A \) such that \( R^{\hat{\pi}} \) is a martingale.
%\end{enumerate}

Following the approach described in the previous section, we construct the family of processes $(R^\pi)_{\pi\in \bar \Ac_{\mathrm{sgn}}}$ introduced in Proposition \ref{MgOptPr}, by setting
\begin{equation}\label{defRpi}
R^{\pi}_{t} = -\exp\left(-\lambda\left(X^{x,\pi}_{t} - Y_t\right)\right), \quad t \in [0, T], \; \pi \in \bar \Ac_{\mathrm{sgn}},
\end{equation}
where $(Y, Z, U)$ is a solution to the BSDEJ \eqref{BSDEJ}. From condition (i) of Proposition \ref{MgOptPr}, we need to have $\xi=F$. Therefore $(Y, Z, U)$ satisfies 
\begin{equation}\label{BSDEexp}
Y_t = F + \int_{t}^{T} f(s, Y_s, Z_s, U_s) \, ds - \int_{t}^{T} Z_s \, dW_s - \int_{t}^{T} \int_{\R} U_s(e) \, \tilde{N}(ds, de), \quad t \in [0, T].   
\end{equation}

%To precise the definition of a solution to equation we introduce the following spaces.

%\begin{itemize}
   % \item \( S^{\infty} = \left\{ \F\text{-adapted c\`adl\`ag processes } Y  \text{ valued in $\R$ such that } \esssup_{t \in [0,T]} |Y_t| < \infty \right\} \).
    
    %\item \( L^2(W) = \left\{\Pc\text{-measurable processes } Z \text{ valued in $\R$ such that } \mathbb{E} \left[\int_0^T |Z_t|^2 \, dt \right] < \infty \right\} \).
    
    %\item \( L^2(\tilde{N}) = \left\{\mathcal{P} \otimes \mathcal{B}(E)\text{-measurable processes } U \text{ valued in $\R$ such that } \mathbb{E} \left[\int_0^T \int_\R U_t^2(e) \, \nu(de) \, dt \right] < \infty \right\} \).

    %\item The set $L^2(\nu) = \{ u : E \xrightarrow{} \mathbb{R}, \int_{\R}u^2(e)\nu(de) <\infty \}$. Moreover, we define $L^{\infty}(\nu)$ as all the function $u:E\xrightarrow{}\mathbb{R}$ with bounded values (almost surely) and equipped with the topology of convergence in measure. 
%\end{itemize}
%For $u \in L^2(\nu)$ and $\lambda > 0$, we define the following non-negative functional:
%\beqs
%|u|_{\lambda} := \int_\R g_{\lambda}(u(e))\, \nu(de),
%\enqs where $g_\lambda(x) = \frac{e^{\lambda x} - \lambda x - 1}{\lambda}.$

%In this framework we look for a  driver $f$ such that $R^{(\pi)}$ is a supermartingale for all $\pi \in A$ and there is $\hat{\pi}$ for which $R^{(\hat{\pi})}$ is a martingale.\\
In order to compute $f$, we apply Ito formula to $R^{\pi}$. Define the process $L$ by $L_t = X^{x, \pi}_t - Y_t$ for $t\in[0,T]$. Using \eqref{decompSMIS} and since $\pi_t$ coincides with $p_t(0)$ for almost all $t\in[0,T]$, we have 
\beqs
dL_t &= &  \left[ p_t(0)\kappa + f(t, Y_t, Z_t, U_t) 
+ \int_{\gamma(\R) \setminus \{0\}} \left(p_t(g) - p_t(0)\right) \hat{\eta}(g) \mu(dg) \right] dt \\
& & + \left[p_t(0)\sigma - Z_t\right] dW_t 
\\
& &  + \int_{\R} \Big(p_t(\gamma(e))\eta(e) - U_t(e) \Big)\tilde{N}(dt, de) 
\;,\quad t\in[0,T]\;.
\enqs
Applying  Itô's formula to $R^\pi=u(L)$ gives
\beqs
dR_t^\pi &= & R_{t-}^\pi \Bigg[ \Bigg(-\lambda \Big( p_t(0)\kappa + f(t, Y_t, Z_t, U_t) 
+ \int_{\gamma(\R) \setminus \{0\}} \left(p_t(g) - p_t(0)\right) \hat{\eta}(g) \mu(dg) \\
& & \qquad \qquad  - \int_\R \Big( 
p_t(\gamma(e)) \eta(e) - 
 U_t(e) 
%- p_t(0)\eta(e)\mathds{1}_{\{\gamma = 0, |\eta|\leq 1\}} 
%\\
%&
%\quad - p_t(\gamma(e)) \hat{\eta}(\gamma(\R)) \mathds{1}_{\{\gamma \neq 0, |\eta|\leq 1\}}
\Big) \nu(de) \Big) 
+ \frac{1}{2} \lambda^2 \left(p_t(0)\sigma - Z_t\right)^2 \Bigg) dt \\
& & \qquad \qquad  - \lambda \left(p_t(0)\sigma - Z_t\right) dW_t 
+ \int_\R \left( \exp\left(-\lambda(p_t(\gamma(e))\eta(e) - U_t(e))\right) - 1 \right) N(dt, de)\Bigg]\;,\quad t\in[0,T]\;. 
\enqs
Therefore we get
\beqs
dR_t^\pi &= & R_{t-}^\pi \Big[ \Lambda_t dt +\Gamma_tdW_t+\int_\R \Theta_t(e)\tilde N(de,dt)\Big]
\enqs
with
\beqs
\Gamma_t & = & - \lambda \left(p_t(0)\sigma - Z_t\right) \\
\Theta_t(e) & = &  \exp\left(-\lambda(p_t(\gamma(e))\eta(e) - U_t(e))\right) - 1 \\
\Lambda_t & = & -\lambda \Big( p_t(0) \kappa + f(t, Y_t, Z_t, U_t) 
+ \int_{\gamma(\R) \setminus \{0\}} \left(p_t(g) - p_t(0)\right) \hat{\eta}(g) \mu(dg) \\
& & \qquad \qquad  - \int_\R \Big( \frac{1}{\lambda}\Theta_t(e)+
p_t(\gamma(e)) \eta(e) - 
 U_t(e) \Big) \nu(de) \Big) + \frac{1}{2} \lambda^2 \left(p_t(0)\sigma - Z_t\right)^2  
\enqs
for $t\in[0,T]$ and $e\in \R  $.
%
%Therefore, we get
%\begin{align*}
%dR^\pi_{t-}  = &  R_{t-}^\pi \Bigg[\Bigg( -\lambda \Bigg( p_t(0)\big(\kappa-\int_{|\eta|\leq1}\eta(e)\nu(de)dt\big) + f(t, Z_t, U_t) 
%+ \int_{\R} U_t(e)\nu(de)\Bigg) %\\
%&%\quad + \int_\R \Big( 
%p_t(\gamma(e)) \eta(e) \mathds{1}_{|\eta| > 1} 
% U_t(e) 
%- p_t(0)\eta(e)\mathds{1}_{\{\gamma = 0, |\eta|\leq 1\}} \\
%&\quad - p_t(\gamma(e)) \hat{\eta}(\gamma(\R)) \mathds{1}_{\{\gamma \neq 0, |\eta|\leq 1\}} \Big) \nu(de) \Bigg) 
%+ \frac{1}{2} \lambda^2 \left(p_t(0)\sigma - Z_t\right)^2 \Bigg) dt \\
%& - \lambda \left(p_t(0)\sigma - Z_t\right) dW_t 
%+ \int_\R \left( \exp\left(-\lambda(p_t(\gamma(e))\eta(e) - U_t(e))\right) - 1 \right) N(dt, de)\Bigg]\;,\quad t\in[0,T]\;.
%\end{align*}

To satisfy condition (iii) of Proposition \ref{MgOptPr}, we impose the  inequality $\Lambda_t\geq 0$ which gives
\beqs
 -\lambda f(t, Y_t, Z_t, U_t) 
- \lambda \left( p_t(0)\big(\kappa-\int_{\R}\eta(e)\nu(de)\big)   + \int_{\R}  U_t(e)  \nu(de) \right)&  \\
+ \frac{1}{2} \lambda^2 \left(p_t(0)\sigma - Z_t\right)^2 %&\\
% + \big|p_t(\gamma)-U_t\big|_\lambda
 +\int_{\R} \left( \exp\left(-\lambda \left(p_t(\gamma(e)) \eta(e) - U_t(e)\right) \right) - 1 \right) \nu(de)
 & \geq  & 0.
\enqs
%\begin{align*}
% -\lambda f(t, Z_t, U_t) 
%- \lambda \left( p_t(0)\kappa + \int_{\gamma(\R) \setminus \{0\}} \left(p_t(g) - 2p_t(0)\right) K(g, |\eta| \leq 1)\hat{\eta}(g)\mu(dg) \right.& \\
% \left. \qquad - \int_{\R} \left[ p_t(\gamma(e)) \eta(e)\mathds{1}_{\gamma\neq0,\;|\eta|\leq 1}(e) - U_t(e) \right] \nu(de) \right) 
%+ \frac{1}{2} \lambda^2 \left(p_t(0)\sigma - Z_t\right)^2 &\\
% + \int_{\R} \left( \exp\left(-\lambda \left(p_t(\gamma(e)) \eta(e) - U_t(e)\right) \right) - 1 \right) \nu(de) & \geq 0.
%\end{align*}
Let us define the constant
\begin{equation}
C_{\kappa, \eta} = \frac{1}{\lambda\sigma}\Big(
%\kappa - 2\int_{\gamma(\R) \setminus \{0\}} K(g, |\eta| \leq 1)\hat{\eta}(g)\mu(dg)
\kappa-\int_{\R}\eta(e)\nu(de)\Big).    
\end{equation}
Then we have
\beqs
 -\lambda f(t,Y_t,  Z_t, U_t) 
+ \frac{1}{2} \lambda^2 \left[p_t(0)\sigma - \left(Z_t + \frac{C_{\kappa, \eta}}{\lambda} \right) \right]^2 
- \lambda Z_t C_{\kappa, \eta} - \frac{C_{\kappa, \eta}^2}{2} &   & \\
 \quad + \int_{\R} \left[ \exp\left(\lambda \left(U_t(e) - p_t(\gamma(e)) \eta(e) \right) \right) - 1 - \lambda %\left(
 U_t(e) 
 %- p_t(\gamma(e)) \eta(e)\mathds{1}_{\gamma\neq0,\;|\eta|\leq 1}(e) \right) 
 \right] 
 \nu(de)  % \\
% \quad -\lambda \int_{\gamma(\R) \setminus \{0\}} p_t(g) K(g, |\eta| \leq 1)\hat{\eta}(g)\mu(dg)  & 
 & \geq &  0\;.
\enqs
Using the disintegration formulation on the set $\{\gamma \neq 0\}$, we can rewrite the previous inequality as
\begin{align*}
 -\lambda f(t,Y_t,  Z_t, U_t) 
+ \frac{1}{2} \lambda^2 \left[p_t(0)\sigma - \left(Z_t + \frac{C_{\kappa, \eta}}{\lambda} \right) \right]^2 
- \lambda Z_t C_{\kappa, \eta} - \frac{C_{\kappa, \eta}^2}{2} &  \\
 \quad + \int_{\gamma=0} \left[ \exp\left(\lambda \left(U_t(e) - p_t(0) \eta(e) \right) \right) - 1 - \lambda 
 U_t(e) 
 \right] 
 \nu(de) &  \\
 \quad + \int_{\gamma(\R)\setminus\{0\}}\int_{\gamma=g} \left[ \exp\left(\lambda \left(U_t(e) - p_t(g) \eta(e) \right) \right) - 1 - \lambda 
 U_t(e) 
 \right] K(g,de)\mu(dg)
 %\nu(de)
 & 
~\geq~ 0.
\end{align*}
%\begin{align*}
%\int_{\gamma(\R) \setminus \{0\}} \int_{\gamma = g} 
%\left[ \exp\left(\lambda \left(U_t(e) - p_t(\gamma(e)) \eta(e) \right)\right) - 1 
%- \lambda \left(U_t(e) - p_t(\gamma(e)) \eta(e) \mathds{1}_{|\eta|\leq 1}(e)\right) \right] 
%K(g, de)\mu(dg).
%\end{align*}
% modifs de la bsde et mettre tout sous la meme integrale avec la mesure mu
% (utilisation de la formule de désintégration)
To satisfy the previous inequality, %condition (iii) of Proposition \ref{MgOptPr}, 
we define the driver $f$ by
%{\footnotesize
\begin{align}
f(z, u) =  \inf_{p \in[-\underline{\pi},\overline \pi]} f^1(z,u,p)
%\left\{ \frac{1}{2} \lambda \left[p - \left(z + \frac{C_{\kappa, \eta, K}}{\lambda} \right) \right]^2 + \int_{\gamma = 0} \left[ \frac{\exp(\lambda(u(e) - p \eta(e))) - 1}{\lambda} - (u(e) - p \eta(e)) \right] \nu(de) \right\} \\
\label{defDriverfexp}
 + \int_{\gamma(\R) \setminus \{0\}} \inf_{p \in[-\underline{\pi},\overline\pi]} f^2(g,u,p)
%\left\{ \int_{\gamma = g} \left[ \frac{\exp(\lambda(u(e) - p \eta(e))) - 1}{\lambda} - \left(u(e) - \mathds{1}_{\{\eta > 1\}} p \eta(e)\right) \right] K(g, de) \right\} 
\mu(dg) 
%\\
 - z C_{\kappa, \eta} - \frac{C_{\kappa, \eta}^2}{2 \lambda}
\end{align}
%}
with
\beqs
f^1(z, u,p) & =  & \frac{1}{2} \lambda \left[p \sigma  - \left(z + \frac{C_{\kappa, \eta}}{\lambda} \right) \right]^2 + \int_{\gamma = 0} \left[ \frac{\exp(\lambda(u(e) - p \eta(e))) - 1}{\lambda} - u(e)
%- p \eta(e)\mathds{1}_{|\eta|\leq 1}(e))
\right] \nu(de) 
\enqs and
%for $z\in\R$, $u\in L^2(\nu)$ and $p\in\R$, and %\\%\label{defDriverf}
\beqs
f^2( g,u,p) & = & %\left\{
\int_{\gamma = g} \left[ \frac{\exp(\lambda(u(e) - p \eta(e))) - 1}{\lambda} - %\left(
u(e) %- \mathds{1}_{\{|\eta| > 1\}} p \eta(e)\right) 
\right] K(g, de) %\right\} %\mu(dg) %\\
% - z C_{\kappa, \eta, K} - \frac{C_{\kappa, \eta, K}^2}{2 \lambda}.
\enqs
for $z\in\R$, $u\in L^2(\nu)\cap L^\infty(\nu)$, $g\in\gamma(\R)$ and $p\in\R$.
We notice that $f$ does not depend neither on the time variable $t$, nor on the component $Y$. 
We finally get the following result. 
\begin{Theorem}
    Suppose that %$\nu(\eta>0)=0$ 
    %$\hat \eta(g)<0$ for $\mu$-a.a. $g\in\gamma(\R)\setminus \{0\}$. 
   %Suppose also that 
   the Backward SDE \eqref{BSDEJ} with parameters $\xi=F$ and $f$ given by \eqref{defDriverfexp} admits a solution $(Y,Z,U)\in S^\infty \times L^2(W) \times L^2(\tilde{N})\cap L^\infty(\tilde{N})$. Then, there exists a Borel map %\textcolor{red}{need $\gamma(\R)$ open }
   $p^*:~\R\times\R\times  L^2(\nu)\cap L^\infty(\nu)\rightarrow\R$  satisfying
    \beq\label{mindriveratpistar}
f(z, u) & = & f^1(z,u,p^*(0,z,u))+
%\frac{1}{2} \lambda \left[p^*(0,z,u) - \left(z + \frac{C_{\kappa, \eta, K}}{\lambda} \right) \right]^2 \\
% & + \int_{\gamma = 0} 
% \left( \frac{\exp(\lambda(u(e) - p^*(e,z,u) \eta(e))) - 1}{\lambda} - (u(e) - p^*(e,z,u) \eta(e)) \right) \nu(de)  \\\label{Driverfp}
%&+ 
\int_{\gamma(\R) \setminus \{0\}}  f^2(g,u,p^*(g,z,u)) %\int_{\gamma = g} \left( \frac{\exp(\lambda(u(e) - p(e,z,u) \eta(e))) - 1}{\lambda} - \left(u(e) - \mathds{1}_{\{\eta > 1\}} p(e,z,u) \eta(e)\right)  \right)K(g, de) 
\mu(dg) %\\
%&
- z C_{\kappa, \eta} - \frac{C_{\kappa, \eta}^2}{2 \lambda}
\enq
and
    \beqs
        p^*(g,z,u) & \in &  [-\underline{\pi},\overline \pi] 
    \enqs
for all $g,z,u\in\gamma(\R)\times\R\times L^2(\nu)\cap L^\infty(\nu)$, and we have $V(x)=u(x-Y_0)$ and $\pi^*=(p^*(\Delta G_t,Z_t,U_t))_{t\in[0,T]}$ is an optimal strategy in $\bar \Ac_{\mathrm{sgn}}$.
\end{Theorem}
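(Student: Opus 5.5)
The plan is to build a measurable minimizer $p^*$ first and then check conditions (i)--(iv) of Proposition \ref{MgOptPr} for the family $R^\pi$ defined in \eqref{defRpi}.

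\textbf{Step 1: measurable minimizer.} For fixed $(z,u)$ with $u\in L^2(\nu)\cap L^\infty(\nu)$, the map $p\mapsto f^1(z,u,p)$ is continuous and strictly convex on $[-\underline\pi,\overline\pi]$. Indeed, the quadratic term has second derivative $\lambda\sigma^2>0$, and the integral term $p\mapsto\int_{\gamma=0}\lambda^{-1}e^{\lambda(u-p\eta)}\nu(de)$ is convex. This integral is finite and can be differentiated under the integral sign: for $|p|\le\max(\underline\pi,\overline\pi)$ we have $|e^{\lambda(u-p\eta)}-1-\lambda(u-p\eta)|\le C(u^2+\eta^2)$ with $u$ bounded, and $\int\eta^2\,d\nu<\infty$ by \eqref{condetanu1}; the leftover linear term $p\int\eta\,d\nu$ is finite by \eqref{condetanu2}.

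The unique minimizer $p^*(0,z,u)$ therefore exists on the compact interval. It is Borel in $(z,u)$ because $p^*(0,z,u)=\lim_n \min\operatorname{argmin}$ over a dyadic grid, and each grid minimum is a measurable selection of finitely many continuous functions. Alternatively one can invoke a measurable selection theorem for a Carathéodory function over a compact set.

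For $g\ne0$, the map $p\mapsto f^2(g,u,p)$ is convex and continuous. To make the minimizer unique, if needed, I would take the smallest minimizer, $p^*(g,z,u)=\min\operatorname{argmin}$. This is $\Bc(\R)\otimes\Bc(L^2)$-measurable by the same grid argument, since $g\mapsto f^2(g,u,p)$ is measurable by the kernel property of $K$. Then \eqref{mindriveratpistar} holds by construction.

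\textbf{Step 2: supermartingale and martingale properties.} For $\pi=p(\Delta G)\in\bar\Ac_{\mathrm{sgn}}$, the Itô computation preceding the theorem gives $dR^\pi_t=R^\pi_{t-}[\Lambda_tdt+\Gamma_tdW_t+\int\Theta_t\tilde N]$. The definition \eqref{defDriverfexp} of $f$ as an infimum, together with the disintegration, yields $\Lambda_t\ge0$; since $R^\pi<0$, the drift is nonpositive. For $\pi^*$, Step 1 gives $\Lambda_t=0$.

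Hence $R^\pi=R_0\,\mathcal E(\int\Gamma dW+\int\Theta\tilde N)\,e^{\int_0^\cdot\Lambda_sds}$ with $R_0=-e^{-\lambda(x-Y_0)}$, which gives condition (ii). Condition (i) follows from $Y_T=F$.

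The main obstacle is upgrading the local martingale $\mathcal E$ to a true martingale, so that $R^\pi$ is a genuine supermartingale and $R^{\pi^*}$ is a genuine martingale. My first approach is to use boundedness of $\pi$, $Y$ and $U$. These make $\Theta$ bounded and $\Theta>-1$, with $\int\Theta^2\,d\nu\le C(|U|^2+\eta^2)$ integrable. Together with $Z\in L^2(W)$, this makes $\int Z\,dW$ a BMO martingale; this is standard for quadratic BSDEs with bounded $Y$, obtained by applying Itô's formula to $e^{\beta Y}$. The jump part is BMO as well, since its jumps and $\nu$-energy are bounded.

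Kazamaki's criterion for Doléans-Dade exponentials of BMO martingales with jumps bounded below by $-1+\delta$ (Lépingle--Mémin type) then shows that $\mathcal E$ is a uniformly integrable martingale. So $R^{\pi^*}$ is a martingale. For general $\pi$, $R^\pi$ is a negative process with decreasing factor $e^{\int\Lambda}$; using localization and Fatou's lemma on the nonnegative process $-R^\pi$ gives the supermartingale property.

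\textbf{Step 3: conclusion.} Proposition \ref{MgOptPr} then yields $V_{\mathrm{sgn}}(x)=R_0=u(x-Y_0)$ and the optimality of $\pi^*$. Finally, $\pi^*=p^*(\Delta G_t,Z_t,U_t)$ is $\Pc\vee\sigma(G)$-measurable, being a Borel function of the predictable $(Z,U)$ and $\Delta G$, and it takes values in $[-\underline\pi,\overline\pi]$. It therefore belongs to $\bar\Ac_{\mathrm{sgn}}$.
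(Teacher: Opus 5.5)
\textbf{The gap: the supermartingale property for a general strategy.} Your overall structure is the paper's: a Borel minimizer, then the martingale optimality principle through the multiplicative form of $R^\pi$. Step 1 is fine, and so is the martingale property of $R^{\pi^*}$. Your grid/smallest-minimizer selection is a valid substitute for the paper's explicit Borel description of the graph of $p^*$ via strict convexity.

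The gap is the supermartingale property for a general $\pi\in\bar\Ac_{\mathrm{sgn}}$. Take a localizing sequence $\tau_n$ and apply Fatou's lemma to $-R^\pi\ge 0$. This gives $\mathbb{E}[-R^\pi_t|\mathcal{F}_s]\le\liminf_n\mathbb{E}[-R^\pi_{t\wedge\tau_n}|\mathcal{F}_s]$, that is, $\mathbb{E}[R^\pi_t|\mathcal{F}_s]\ge\limsup_n\mathbb{E}[R^\pi_{t\wedge\tau_n}|\mathcal{F}_s]$. This is the opposite of the inequality you need. A nonpositive local supermartingale need not be a supermartingale: minus the inverse of a three-dimensional Bessel process is a nonpositive local martingale whose expectation strictly increases.

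This step cannot be passed over. The inequality $\mathbb{E}[R^\pi_T]\le R_0$ is exactly what gives the upper bound $V_{\mathrm{sgn}}(x)\le u(x-Y_0)$. A smaller point: since $\Lambda\ge 0$, the factor $e^{\int_0^\cdot\Lambda_s ds}$ is nondecreasing. It is $R_0e^{\int_0^\cdot\Lambda_s ds}$ that is nonincreasing.

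\textbf{How to repair it with your own tools.} Your BMO/Kazamaki argument never uses optimality of $\pi^*$, so it applies to every $\pi\in\bar\Ac_{\mathrm{sgn}}$.
\begin{itemize}
\item Since $p$ takes values in $[-\underline\pi,\overline\pi]$, $\int\Gamma\,dW$ is BMO as soon as $\int Z\,dW$ is.
\item With $C$ a bound for $|\eta|$, we have $\Theta\ge e^{-\lambda(C\max(\underline\pi,\overline\pi)+\|U\|_\infty)}-1>-1$.
\item We also have $|\Theta|\le C'(|U|+|\eta|)$, so the jump part is BMO.
\end{itemize}
Hence $\mathcal{E}(M^\pi)$ is a true martingale for every $\pi$. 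Set $D_t=R_0e^{\int_0^t\Lambda_s ds}$, which is negative and nonincreasing. Then directly $\mathbb{E}[R^\pi_t|\mathcal{F}_s]=\mathbb{E}[\mathcal{E}_t(M^\pi)D_t|\mathcal{F}_s]\le D_s\,\mathbb{E}[\mathcal{E}_t(M^\pi)|\mathcal{F}_s]=R^\pi_s$, with no Fatou needed. Integrability of $R^\pi_t$ comes from boundedness of $Y$ and square integrability of $e^{-\lambda X^{x,\pi}}$.

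\textbf{Comparison with the paper's route.} The paper is more elementary here. It uses $|R^\pi|\le e^{\lambda\|Y\|_\infty}e^{-\lambda X^{x,\pi}}$, where $e^{-\lambda X^{x,\pi}}$ solves a linear SDE with bounded coefficients $\tilde\Gamma,\tilde\Theta,\tilde\Lambda$ and is therefore bounded in $L^2$. This uniform integrability upgrades the local (super)martingale property for all $\pi$ at once. It needs neither BMO estimates on $(Z,U)$ nor $U\in L^\infty(\tilde N)$. Your route costs the BMO estimate from the BSDE and the jump version of Kazamaki's theorem. In exchange, it gives uniform integrability of $\mathcal{E}(M^\pi)$ and a reverse Hölder inequality, which are useful beyond this verification step.
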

\begin{proof} 
Let $(Y,Z,U)\in S^\infty \times L^2(W) \times L^2(\tilde{N})$  be solution to BSDE \eqref{BSDEexp} with $f$ given by \eqref{defDriverfexp} and consider the family $(R^\pi)_\pi$ defined by \eqref{defRpi}. We prove that the conditions of Proposition \ref{MgOptPr} are satisfied.

The family $(R^\pi)_\pi$  satisfies (i) and (ii) of Proposition \ref{MgOptPr} as $R^\pi_T=u(X_T^{x,\pi}-Y_T)=u(X_T^{x,\pi}-F)$ and $R_0^\pi=u(X_0^{x,\pi}-Y_0)=u(x-Y_0)$ for any $\pi\in \bar \Ac_{\mathrm{sgn}}$.  From the definition of $f$ and the computations made above, we observe that $R^\pi$  is a local supermartingale. We show that $R^\pi$ is even a supermartingale since $Y \in S^{\infty} $ and the process $\tilde R^\pi = u(X^\pi)$ is uniformly integrable. Indeed, 
$\displaystyle \sup_{t\in [0,T]}\E (|\tilde R_t^\pi|^2) < +\infty$ because
\beqs
d\tilde R_t^\pi &= & \tilde R_{t-}^\pi \Big[ \tilde\Lambda_t dt +\tilde\Gamma_tdW_t+\int_\R \tilde\Theta_t(e)\tilde N(de,dt)\Big],
\enqs
with $\tilde\Gamma$, $\tilde \Theta(e)$ and $\tilde \Lambda$ uniformly bounded since
\beqs
\tilde\Gamma_t & = & - \lambda p_t(0)\sigma \\
\tilde \Theta_t(e) & = &  \exp \left(-\lambda p_t (\gamma(e) )\eta(e) \right) - 1 \\
\tilde \Lambda_t & = & -\lambda \Big( p_t(0) \kappa
+ \int_{\gamma(\R) \setminus \{0\}} \left(p_t(g) - p_t(0)\right) \hat{\eta}(g) \mu(dg)\\
& & \qquad \qquad - \int_\R \Big( \frac{1}{\lambda} \tilde \Theta_t(e)+
p_t(\gamma(e)) \eta(e)  \Big) \nu(de) \Big) + \frac{1}{2} \left(\lambda p_t(0)\sigma\right)^2 
\enqs
For $\tilde \Lambda$, a Taylor expansion is needed to manage the integrated term with respect to $\nu(de)$. 

Now, suppose there exists a Borel function $p^*$ satisfying \reff{mindriveratpistar}. We show that  $\pi^*\in\bar \Ac_{\mathrm{sgn}}$ and that condition (iv) is satisfied with $\pi^*$. Since $p_t^*(g)\in[-\underline \pi,\overline \pi]$ for all $t\in[0,T]$ and $g\in \Gamma(\R)$, we get from \reff{condAdmContSignal1} and \reff{condAdmContSignal2}
\beqs
\int_0^T%\hspace{-2mm}
\Big(p^*_s(0)^2+\int_{\gamma(\R)\setminus\{0\}}%\hspace{-10mm}
\left(|p^*_s(g)|\hat \eta (g)|+p^*_s(g)^2v_\eta(g)\right)\mu(dg)\Big)ds & \leq & \\
T (\underline \pi \wedge \overline \pi)^2\left(1+\int_{\gamma(\R)\setminus\{0\}} v_\eta(g) \mu(dg)\right)+T(\underline \pi \wedge \overline \pi)\int_{\gamma(\R)\setminus\{0\}}|\hat \eta (g)| \mu(dg) & < & +\infty\;.
\enqs

It remains to show that there exists a Borel function $p^*$ satisfying \reff{mindriveratpistar}. Using the dominated convergence theorem, we get that $f^1$ and $f^2$ are twice continuously differentiable w.r.t. $p$ on $\gamma(\R)\times \R\times L^2(\nu)\cap L^\infty(\nu)$ and we have
\beqs
\frac{\partial^2 f^1}{\partial p^2}(z,u,p) & = & \lambda\Big(\sigma   + \int_{\gamma = 0} \eta(e)^2 \exp(\lambda(u(e) - p \eta(e)))  \nu(de) \Big)\;,%~~ >~~0
\\
\frac{\partial^2 f^2}{\partial p^2}(g,u,p) & = & \int_{\gamma = g} \eta(e)^2 \exp(\lambda(u(e) - p \eta(e))) K(g, de)\;, %~~\geq~~0 
\enqs
for all $(g,z,u,p)\in \gamma(\R)\times\R\times L^2(\nu)\cap L^\infty(\nu)\times \R$.

This shows that  the functions $p\mapsto f^1(z,u,p)$ and $p\mapsto f^2(g,u,p)$ are strictly convex  for any $z\in\R$, $u\in L^2(\nu)\cap L^\infty(\nu)$ and $g\in \gamma(\R)$ such that $K(g,\eta\neq0)\neq 0$. If  $K(g,\eta\neq0)= 0$ then  $p\mapsto f^2(g,u,p)$ is constant. 
Therefore, a function $p^*$ satisfying \reff{mindriveratpistar} is given by
\beqs
\big\{(0, z,u,p^*(0,z,u))~:~(z,u)\in \R\times (L^2(\nu)\cap L^\infty(\nu))\big\} & = & A_1 \cup B_1\cup C_1
\enqs
where 
\beqs
A_1 & = & \Big\{(0, z,u,p)~:~ (z,u,p)\in \R\times \Oc_1 \times \R~\mbox{ and }~\frac{\partial f^1}{\partial p}(z,u,p)=0  \Big\}\\
%A_g & = & \Big\{(z,u,p)\in \R\times L^2(\nu)\cap L^\infty(\nu)\times \R~:~\frac{\partial f^1}{\partial p}(z,u,p)=0\mbox{ if }  \Big\}
B_1 & = & \Big\{(0,z,u,-\underline\pi )~:~(z,u)\in \R\times ((L^2(\nu)\cap L^\infty(\nu))\setminus\Oc_1)  ~~\mbox{ and }~~\frac{\partial f^1}{\partial p}(z,u,-\underline \pi)~\geq~0  \Big\} \\
C_1 & = & \Big\{(0,z,u,\overline\pi )~:~(z,u)\in \R\times ((L^2(\nu)\cap L^\infty(\nu))\setminus\Oc_1)  ~~ \mbox{ and }~~\frac{\partial f^1}{\partial p}(z,u,-\underline \pi)~<~0  \Big\}
\enqs
with
\beqs
\Oc_1 & = & \Big\{ (z,u)\in \R\times (L^2(\nu)\cap L^\infty(\nu)) \mbox{ such that } \frac{\partial^2 f^1}{\partial p^2}(z,u,-\underline \pi)\cdot \frac{\partial^2 f^1}{\partial p^2}(z,u,\overline \pi) ~<~0 \Big\}\;.
\enqs
and
\beqs
\big\{(g,z,u,p^*(g,z,u))~:~(g,z,u)\in (\gamma(\R)\setminus\{0\})\times\R\times (L^2(\nu)\cap L^\infty(\nu))\big\} & = & A_2 \cup B_2\cup C_2 \cup D_2
\enqs
where 
\beqs
A_2 & = & \Big\{(g,z,u,p)\in \Gamma\times \R\times \Oc_2 \times \R~:~\frac{\partial f^2}{\partial p}(g,u,p)=0  \Big\}\\
%A_g & = & \Big\{(z,u,p)\in \R\times L^2(\nu)\cap L^\infty(\nu)\times \R~:~\frac{\partial f^1}{\partial p}(z,u,p)=0\mbox{ if }  \Big\}
B_2 & = & \Big\{(g,z,u,-\underline\pi )~:~(g,z,u)\in \Gamma\times \R\times ((L^2(\nu)\cap L^\infty(\nu))\setminus\Oc_2)  ~~\mbox{ and }~~\frac{\partial f^2}{\partial p}(g,z,u,-\underline \pi)~\geq~0  \Big\} \\
C_2 & = & \Big\{(g,z,u,\overline\pi )~:~(g,z,u)\in \Gamma\times \R\times ((L^2(\nu)\cap L^\infty(\nu))\setminus\Oc_2)  ~~ \mbox{ and }~~\frac{\partial f^1}{\partial p}(g,u,-\underline \pi)~<~0  \Big\}\\
D_2 & = & \Big\{(g,z,u,\overline\pi )~:~(g,z,u)\in (\gamma(E)\setminus(\Gamma\cup\{0\}))\times \R\times ((L^2(\nu)\cap L^\infty(\nu))\setminus\Oc_2)   \Big\}
\enqs
with
\beqs
\Oc_2 & = & \Big\{ (g,z,u)\in (\gamma(\R)\setminus\{0\})\times\R\times (L^2(\nu)\cap L^\infty(\nu)) \mbox{ such that }  
%\\
% & & \qquad
 \frac{\partial^2 f^1}{\partial p^2}(z,u,-\underline \pi)\cdot \frac{\partial^2 f^1}{\partial p^2}(z,u,\overline \pi) ~<~0 %\mbox{ and } K(g,\eta\neq0)\neq 0
 \Big\}%\;.
\enqs
and 
\beqs
\Gamma & = & \Big\{ g\in \gamma(\R)\setminus\{0\} \mbox{ such that }   K(g,\eta\neq0)\neq 0\Big\}\;.
\enqs
We then notice that the sets $A_1,A_2, B_1,B_2,C_1,C_2$ and $D_2$ are Borel measurable since $\gamma(\R)$ is Borel measurable. Hence the  set 
$\big\{(g, z,u,p^*(0,z,u))~:~(g,z,u)\in \gamma(\R)\times\R\times (L^2(\nu)\cap L^\infty(\nu))\big\}$ is Borel measurable which ensures that the function $p^*$ is also Borel measurable.
\end{proof}
\subsection{Existence of solution to the BSDE}

%Throughout this section, we will assume, without loss of generality, that $\Phi(0)$ is compact. In the non-compact case, $\Phi(0)$ can be approximated by a sequence of compact sets, allowing us to apply the results established in the compact setting, as in~\cite{Morlais}.
We turn to the study of a solution to the
Backward SDE \eqref{BSDEJ} with parameters $\xi=F$ and $f$ given by \eqref{defDriverfexp}.

Following the approach of \cite{MAM08}, we introduce the sequence of finite measures $(\nu_m)_{m\geq1}$ defined by
\beqs
\nu_m(de) & = & \nu\big(de\cap \big(\R\setminus [\frac{-1}{m};\frac{1}{m}]\big)\big)
\enqs
and a sequence of 
function $(f_{m})_{m\geq 1}$ approximating the driver $f$ and defined by
\beqs
f_{m}(z,u) & = & \inf_{\overline \pi\geq p \geq -\underline{\pi}} f^{1}_m(z,u,p)
%\label{defDriverfm}
 + \int_{\gamma(\R) \setminus \{0\}} \inf_{\overline \pi\geq p \geq -\underline{\pi}} f^{2}_m(z,u,g,p)
\mu(dg) \\
& & 
 - z C_{\kappa, \eta} - \frac{C_{\kappa, \eta}^2}{2 \lambda}
\enqs
with
\beq
f^{1}_m(z, u,p) & = & \frac{1}{2} \lambda \left[p \sigma - \left(z + \frac{C_{\kappa, \eta}}{\lambda} \right) \right]^2\rho_m(z) \nonumber \\
 & & + \int_{\gamma = 0}  h_\lambda\Big(\varphi_m\big(u(e) - p \eta(e)))\Big) \nu_m(de)%\\ & & 
 - \int_{\gamma = 0}p\eta(e)\nu(de) \nonumber
\enq and
\beq
f^{2}_m(z, u,g,p) & = & 
\int_{\gamma = g} h_\lambda\Big(\varphi_m\big(u(e) - p \eta(e)\big)\Big)\rho_m(u(e)) \mathds{1}_{|e|>\frac{1}{m}}K(g, de)\\
 & & - \int_{\gamma = g}p \eta(e)K(g,de)\nonumber
\enq
for $z\in\R$, $u\in L^2(\nu)$, $g\in\gamma(\R)$, $p\in\R$ and $m\geq1$.
For $m\geq 1$, the  function $\rho_m$  is defined by
\beqs
\rho_m(x) & = & (x+m+1)\mathds{1}_{(-(m+1),-m)}(x)+ \mathds{1}_{[-m,m]}(x)+(m+1-x)\mathds{1}_{(m,m+1]}(x)
\enqs
for $x\in\R$. For $m\geq 1$, the function $\varphi_m$  is given by
\beqs
\varphi_m(x) & =  & \varphi(x-m)+m
\enqs
with 
\beqs
\varphi(x) & = & \arctan(x)\mathds{1}_{x>0}+x\mathds{1}_{x\leq 0}
%-\frac{1}{2}x^2+x
\enqs
for $x\in \R$.
%are assumed to satisfy the following conditions.
%\begin{itemize}
%    \item $\rho_m$ is be continuously differentiable for all $m\geq1$.
%    \item The sequence $(\rho_m(x))_m$ is nondecreasing for all $x\in\R$.
   % \item $\rho_m$ is increasing, that is $\rho_m'(x)>0$ for all $x\in\R$.
   % \item There exists a constant $C$ such that %$\rho_m(x)\in(-\infty,m+C]$ for $x\in\R$ and $m\geq1$.
   %\item $\rho_m(x)\in[0,1]$ for all $x \in \R$.
   % \item $\rho_m(x)=1$ for $|x| \leq m$. 
   % \item $\rho_m(x)=0$ for $|x| \geq m+1$.
    %\item $\rho_m(x)=m+\frac{1}{2}$ for $x\geq m+1$.
%\end{itemize} 
%An example of such a sequence $(\rho_m)_m$ is given by
%\beqs
%   \rho_m(x) & =  & \mathds{1}_{(-m,m]}(x)+\rho(x-m)\mathds{1}_{[m,m+1]}(x)
%    +\rho(x+m)\mathds{1}_{[-(m+1),-m]}(x)
%\enqs
%%\beqs
%%\rho_m(x) & =  & \rho(x-m)+m
%%\enqs
%with 
%\beqs
%\rho(x) & = & \exp\Big(1-\frac{1}{1-|x|^2}\Big)
%\enqs
%%\beqs
%%\rho(x) & = & \arctan(x)\mathds{1}_{x>0}+x\mathds{1}_{x\leq 0}
%%-\frac{1}{2}x^2+x
%%\enqs
%for $n,m\geq1$ and $x\in\R$. %and $C=\frac{\pi}{2}$.

We first have the following Lipschitz property for the approximated drivers.

\begin{Lemma}\label{LemLipfm} (i) The functions $f_m$, $m\geq 1$, are well defined on $\R\times L^2(\nu)$ and  the sequence $(f_m)_{m\geq 1}$ is nondecreasing:
\beqs
f_m(z,u) & \leq & f_{m+1}(z,u)
\enqs
for all $(z,u)\in\R \times L^2(\nu)$ and $m\geq 1$. 

\noindent (ii)    For $m\geq1$, the functions $f_{m}$ is Lipschitz continuous: there exists a positive constant $L_{m}$ such that
    \beqs
        |f_{m}(z,u)-f_{m}(z',u')| & \leq  & L_{m}\Big(|z-z'|+|u-u'|_{L^2(\nu)}\Big)
    \enqs
    for all $z,z'\in\R$ and $u,u'\in L^2(\nu)$.
\end{Lemma}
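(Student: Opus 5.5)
The plan is to handle the three structural ingredients of $f_m$ separately: the truncations $\rho_m$ and $\varphi_m$, the measures $\nu_m$ and $K(g,\cdot)$, and the infima over $p\in[-\underline\pi,\overline\pi]$. Both monotonicity and the Lipschitz property will be proved pointwise in $p$, with bounds uniform in $p$. They then pass to the infima through the elementary facts that $a\le b$ implies $\inf a\le\inf b$, and that $|\inf_p a(p)-\inf_p b(p)|\le\sup_p|a(p)-b(p)|$.

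\textbf{Well-posedness.} First I record elementary properties of $\varphi_m$.
\begin{itemize}
\item $\varphi_m(x)=x$ for $x\le m$, and $\varphi_m(x)=m+\arctan(x-m)\in(m,x]$ for $x>m$.
\item Hence $\varphi_m(x)\le m+\frac{\pi}{2}$, $|\varphi_m(x)|\le|x|$, and $\varphi_m$ is $1$-Lipschitz.
\end{itemize}
Since $h_\lambda'(x)=e^{\lambda x}-1$, the map $x\mapsto h_\lambda(\varphi_m(x))$ is Lipschitz with constant $c_m:=e^{\lambda(m+\pi/2)}+1$. It vanishes at $0$, so it is bounded by $c_m|x|$, and it is nonnegative.

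Using these facts, each integrand in $f^1_m$ and $f^2_m$ is dominated. The term with $h_\lambda\circ\varphi_m$ is bounded by $c_m(|u(e)|+|p||\eta(e)|)$, integrated against the finite measure $\nu_m$; note that $\nu_m(\R)<\infty$ because $\int(1\wedge|e|^2)\nu(de)<\infty$. The linear terms $p\eta$ are integrable by \eqref{condetanu2}. For the $g$-integral I use two bounds on $\inf_p f^2_m(z,u,g,p)$:
\begin{itemize}
\item from above, by its value at $p=0$;
\item from below, by $-(\underline\pi\vee\overline\pi)\int_{\gamma=g}|\eta|K(g,de)$.
\end{itemize}
Both bounds are $\mu$-integrable by the disintegration and \eqref{condetanu2}. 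Measurability of the infima follows from continuity in $p$, by taking the infimum over rational $p$.

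\textbf{Monotonicity.} For fixed $x$, the map $m\mapsto\varphi_m(x)$ is nondecreasing. Indeed, it equals $x$ when $m\ge x$, and its $m$-derivative $1-\frac{1}{1+(x-m)^2}$ is nonnegative when $m<x$; the two pieces match continuously at $m=x$. Moreover:
\begin{itemize}
\item if $x\le0$ then $\varphi_m(x)=x$ for all $m$;
\item if $x>0$ then $\varphi_m(x)\in(0,x]$, and $h_\lambda$ is increasing on $\R_+$.
\end{itemize}
So $m\mapsto h_\lambda(\varphi_m(x))$ is nondecreasing and nonnegative. In addition, $\rho_m\in[0,1]$ is nondecreasing in $m$, and $\nu_m$ and $\mathds 1_{|e|>1/m}$ increase in $m$. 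Multiplying nonnegative nondecreasing factors, $f^1_m$ and $f^2_m$ are nondecreasing in $m$ for each fixed $p$; the terms $-p\eta$ do not depend on $m$. Taking infima and integrating in $\mu(dg)$ then gives (i).

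\textbf{Lipschitz property.} In $z$, the map $z\mapsto\frac{\lambda}{2}[p\sigma-(z+C_{\kappa,\eta}/\lambda)]^2\rho_m(z)$ vanishes for $|z|\ge m+1$. On $|z|\le m+1$ it is a product of functions that are bounded and Lipschitz, uniformly in $p\in[-\underline\pi,\overline\pi]$, so it is globally Lipschitz uniformly in $p$. The remaining term $-zC_{\kappa,\eta}$ is linear. In $u$, the $\gamma=0$ part has integrand difference at most $c_m|u(e)-u'(e)|$. For the $\gamma\ne0$ part I write $a(x)=h_\lambda(\varphi_m(x))$ and use
\[|a(x)\rho_m(y)-a(x')\rho_m(y')|\le|a(x)-a(x')|\rho_m(y)+|a(x')||\rho_m(y)-\rho_m(y')|,\]
exchanging the roles of $(u,u')$ if needed. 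This is trivial when $|u(e)|,|u'(e)|>m+1$, since then both $\rho_m$ factors vanish. Otherwise the point kept in the bounded factor satisfies $|u'(e)|\le m+1$, so $|a(x')|\le c_m(m+1+(\underline\pi\vee\overline\pi)C)$ by \eqref{condetanu1}. This yields a pointwise bound $C_m|u(e)-u'(e)|$, uniform in $p$ and $g$.

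Integrating against $K(g,de)\mu(dg)$ restricted to $\{|e|>1/m\}$, which by the disintegration is $\nu_m$ restricted to $\{\gamma\neq0\}$, and applying Cauchy--Schwarz with $\nu_m(\R)<\infty$ gives $C_m\nu_m(\R)^{1/2}|u-u'|_{L^2(\nu)}$. I expect this product-truncation step to be the main obstacle. The difficulty is that the supremum over $p$ must be taken inside the $\mu(dg)$-integral, so the pointwise bound must be uniform in both $p$ and $g$, and the rest of the argument is arranged to secure exactly that.
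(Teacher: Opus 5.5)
Your proposal is correct and follows essentially the same route as the paper. For monotonicity, you show that each nonnegative factor ($\rho_m$, $h_\lambda\circ\varphi_m$, the truncation $\mathds{1}_{|e|>1/m}$) is nondecreasing in $m$ for fixed $p$ and pass this through the infimum; for the Lipschitz property, you use bounds uniform in $p\in[-\underline\pi,\overline\pi]$ and pass them through the infimum over the compact set. Your version fills in details the paper's sketch omits, notably the extra factor $\rho_m(u(e))$ in $f^2_m$ and the uniformity in $g$ needed before integrating against $\mu(dg)$, which the paper dismisses with ``by the same argument''.
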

\begin{proof}
(i) To show the well posedness of $f_m$, it is sufficient to show that the integrals involving the component $u$ are well defined.
We first observe that from the definition of $\varphi_m$, the function $e\mapsto \varphi_m(u(e)-p\eta(e))$ is bounded. Therefore, there exists a constant $C$ such that
\beqs
0~~\leq~~h_\lambda\Big(\varphi_m\big(u(e)-p\eta(e)\big)\Big) & \leq & C|u(e)-p\eta(e)|^2 
\enqs
for all $e\in \R  $. Since $u,\eta\in L^2(\nu)$ we get the well definition of the integrals.

We next observe that the sequence $(\rho_m(x))_{m\geq1}$ is nondecreasing for any $x\in\R$. Therefore, the sequence  $\big(\frac{1}{2} \lambda \big[p \sigma  - (z + \frac{C_{\kappa, \eta}}{\lambda} ) \big]^2\rho_m(z)\big)_{m\geq1}$ is nondecreasing for any $z\in\R$ and any $p\in [-\underline \pi,\overline\pi]$.

The sequence $(h_\lambda(\varphi_m(x)))_{m\geq1}$ is also nondecreasing for any $x\in \R$. We indeed have 
\beqs
h_\lambda(\varphi_{m+1}(x))-h_\lambda(\varphi_{m}(x)) & = &\mathds{1}_{x\geq m}\big(h_\lambda(\varphi_{m+1}(x))-h_\lambda(\varphi_{m}(x))\big)\;.
\enqs
Since $\varphi_{m+1}(x)= \varphi_{m}(x)$ for $x<m$, $\varphi_{m+1}(x)\geq \varphi_{m}(x)\geq 0$ for $x\geq m$, and $h_\lambda$ is nondecreasing on $\R_+$, we get
\beqs
h_\lambda(\varphi_{m+1}(x))-h_\lambda(\varphi_{m}(x)) & \geq & 0
\enqs 
for all $x\in\R$. Therefore we get $f_{m+1}\geq f_m$ by passing to the infimum over $p\in[\underline \pi,\overline \pi]$.

(ii) For $m\geq1$, we have from the truncation term $\rho_m$ that the function $f^{1,m}(.,.,p)$ is Lipschitz continuous: there exists a continuous and positive function $p\mapsto L_m(p)$ such that
    \beqs
        |f^{1}_m(z,u,p)-f^1_{m}(z',u',p)| & \leq  & L_{m}(p)\Big(|z-z'|+|u-u'|_{L^2(\nu)}\Big)
    \enqs
    for all $z,z'\in\R$ and $u,u'\in L^2(\nu)$.
 Therefore, the function $(u,z)\mapsto \inf_{\pi\in[ -\underline \pi, \overline \pi]}f^1_m(z,u,p)$ is Lipschitz continuous as an infimum of Lipschitz functions over a compact set. 

By the same argument, we get that $(u,z)\mapsto \int_{\gamma(\R)\setminus\{0\}}\inf_{p\in [-\underline \pi,\overline \pi]}f^2_m(z,u,g,p)\mu(dg)$ is also Lipschitz continuous.  
Hence $f_{m}$ is Lipschitz continuous for $m\geq1$.
% Fix $u \in L^2(\nu)$. For $ m \geq 1$, the function $z \mapsto \frac{1}{2} \lambda \left[p - \left(z + \frac{C_{\kappa, \eta, K}}{\lambda} \right) \right]^2\rho_m(z)$ is $\mathcal{C}^1$ with bounded  first derivative due to the truncation induced by the function $\rho$. So this function is Lipschitz continuous. 
% Fix now $z\in\R$. Using the same argument, we have also Lipschitz property in $u$. 
\end{proof}

We next introduce for $m\geq1$ the  processes $(Y^{m}, Z^{m}, U^{m})_{m}\in S^\infty \times L^2(W) \times L^2(\tilde{N})$ solution to the BSDE
\begin{equation}\label{BSDEexpm}
Y_t = F + \int_{t}^{T} f_{m}( Z_s, U_s) \, ds - \int_{t}^{T} Z_s \, dW_s - \int_{t}^{T} \int_{\R} U_s(e) \, \tilde{N}(ds, de), \quad t \in [0, T].   
\end{equation}
Lemma \ref{LemLipfm} ensures the existence and uniqueness of the process $(Y^{m}, Z^{m}, U^{m})_{m\geq1}\in S^\infty \times L^2(W) \times L^2(\tilde{N})$ for ${m}\geq1$.

We next provide the following estimate on the functions $f^m$, $m\geq 1$.

\begin{Lemma}\label{LemQuadGrowfm} 
For $m\geq1$, the functions %$f$ and 
 $f_{m}$ satisfies the following bounds%:
%\beqs
% \frac{\lambda}{2} z^2 + |u|_\lambda\geq f(z, u) & \geq & -z\int_{\gamma= 0}\eta(e)\nu(de)-\left(\frac{C_{\kappa,\eta,K}}{\lambda}\int_{\gamma= 0}\eta(e)\nu(de)\right)\int_{\gamma= 0}\eta(e)\nu(de)\\
%  & & -(\underline \pi+\overline \pi) %\int_{\gamma(\R)\setminus\{0\}}|\hat \eta(g)|\mu(dg)
%\enqs
%and
\beqs
 \frac{\lambda}{2} z^2 + |u|_\lambda~~\geq~~ f_{m}(z, u) & \geq &  -zC_{\kappa, \eta} - \frac{C_{\kappa, \eta}^2}{2\lambda}%\\
  %& & 
  -(\underline \pi+\overline \pi) \int_{\R}| \eta(e)|\nu(de)
\enqs
for all  $z\in \mathbb{R}$ and $u \in L^{\infty}(\nu)\cap L^{2}(\nu)$. 
\end{Lemma}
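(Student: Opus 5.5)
Both bounds will be proved by evaluating or minorizing the two infima in the definition of $f_m$ separately. Nothing here uses the BSDE; it is a pointwise estimate on the driver.

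\emph{Upper bound.} I bound each infimum by its value at the single admissible choice $p=0\in[-\underline\pi,\overline\pi]$. At $p=0$ the linear terms $-\int p\eta\,d\nu$ vanish, and we get
\[
f^1_m(z,u,0)=\tfrac{\lambda}{2}\Big(z+\tfrac{C_{\kappa,\eta}}{\lambda}\Big)^2\rho_m(z)+\int_{\gamma=0}h_\lambda(\varphi_m(u(e)))\,\nu_m(de),
\]
and similarly for $f^2_m(z,u,g,0)$. I then use three elementary facts. First, $0\le\rho_m\le 1$. Second, $h_\lambda\ge0$ and $\nu_m\le\nu$. Third, $h_\lambda(\varphi_m(x))\le h_\lambda(x)$ for all $x$. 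The third fact holds because $\varphi_m(x)=x$ for $x\le m$, while for $x>m$ we have $m\le\varphi_m(x)\le x$ and $h_\lambda$ is nondecreasing on $\R_+$.

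Integrating the $g$-part against $\mu$ and recombining through the disintegration $\nu(de\cap\{\gamma\neq0\})=\int K(g,de)\mu(dg)$, the jump contributions sum to at most $\int_\R h_\lambda(u(e))\nu(de)=|u|_\lambda$. For the Brownian part, expanding the square gives
\[
\tfrac{\lambda}{2}\Big(z+\tfrac{C}{\lambda}\Big)^2-zC-\tfrac{C^2}{2\lambda}=\tfrac{\lambda}{2}z^2 .
\]
The delicate point is $\rho_m(z)<1$, since then the square is multiplied by $\rho_m(z)$ and this cancellation is lost. In that regime I only need $\tfrac{\lambda}{2}(z+C/\lambda)^2\rho_m(z)-zC-\tfrac{C^2}{2\lambda}\le\tfrac{\lambda}{2}z^2$. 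This holds automatically when $zC\ge0$. Otherwise it requires a short check using $|z|\ge m$ together with $\rho_m\in[0,1]$ (convexity in $\rho_m$ reduces it to the endpoints $\rho_m\in\{0,1\}$). The endpoint $\rho_m=0$ needs $\tfrac{\lambda}{2}z^2+zC+\tfrac{C^2}{2\lambda}=\tfrac{\lambda}{2}(z+C/\lambda)^2\ge0$, which is true. The endpoint $\rho_m=1$ is the equality already computed. So the upper bound holds for all $z$.

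\emph{Lower bound.} For every $p\in[-\underline\pi,\overline\pi]$, I drop the nonnegative quadratic term and the nonnegative $h_\lambda\circ\varphi_m$ terms, keeping only the linear parts. This gives
\[
f^1_m(z,u,p)\ge -p\int_{\gamma=0}\eta\,d\nu\ge-(\underline\pi+\overline\pi)\int_{\gamma=0}|\eta|\,d\nu,
\]
and likewise
\[
f^2_m(z,u,g,p)\ge-(\underline\pi+\overline\pi)\int_{\gamma=g}|\eta|\,K(g,de).
\]
Taking the infima over $p$, integrating in $\mu(dg)$ and disintegrating again yields $-(\underline\pi+\overline\pi)\int_\R|\eta|\,d\nu$, which is finite by \eqref{condetanu2}. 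Adding the explicit terms $-zC_{\kappa,\eta}-C_{\kappa,\eta}^2/(2\lambda)$ gives the stated lower bound.

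The measurability of $g\mapsto\inf_p f^2_m$ follows from continuity in $p$: the infimum can be taken over rationals. The only genuine obstacle I anticipate is the $\rho_m(z)<1$ case in the upper bound, and I expect the endpoint-convexity argument above to settle it.
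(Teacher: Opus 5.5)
Your proof is correct and follows the paper's route. For the upper bound you evaluate both infima at $p=0$ and use $\rho_m\le 1$, $\nu_m\le\nu$ and $h_\lambda\circ\varphi_m\le h_\lambda$ (the paper uses the last of these without comment). For the lower bound you discard the nonnegative quadratic and $h_\lambda\circ\varphi_m$ terms, bound the linear terms in $p$, and recombine through the disintegration. The case $\rho_m(z)<1$ is not actually delicate: the square is nonnegative and $\rho_m\le 1$, so you can replace $\rho_m(z)$ by $1$ before expanding. Your endpoint argument, and the appeal to $|z|\ge m$, are therefore unnecessary, though harmless.
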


\begin{proof}
Let \( z \in \mathbb{R} \), \( u \in L^{\infty}(\nu) \). 
Since $ 0 \in [- \underline{\pi}, \overline \pi]$ and $h_\lambda\geq 0$, we get from the definition of $f_m$
\beqs
f_m(z, u) & \leq & \frac{\lambda}{2} \left( z + \frac{C_{\kappa, \eta}}{\lambda} \right)^2 + \int_\R \left( \frac{e^{\lambda u(e)} - 1}{\lambda} - u(e) \right) \nu(de) - z C_{\kappa, \eta} - \frac{C_{\kappa, \eta}^2}{2\lambda}.
\enqs
By expanding the square in the first term of the r.h.s., we get the upper bound.

For the lower bound, we have
\beq\label{lbdfg}
\frac{\exp(\lambda(u(e) - p \eta(e))) - 1}{\lambda} - u(e) & = & h_{\lambda}\big(u(e) - p \eta(e)\big) -  p\eta(e).
\enq
for all $ e \in \R$. Since  $h_\lambda$ is a positive function we get
\beqs
f(z,u) & \geq & \inf_{ p \in[ -\underline{\pi},\overline{\pi}]} \tilde f^{1}(z,u,p)
%\label{defDriverfm}
 + \int_{\gamma(\R) \setminus \{0\}} \inf_{p \in [-\underline{\pi}, \overline{\pi}]} \tilde f^{2}(z,u,g,p)
\mu(dg) %\\
%& & 
 - z C_{\kappa, \eta} - \frac{C_{\kappa, \eta}^2}{2 \lambda}
\enqs
with
\beqs
\tilde f^1(z, u,p) & = & %\frac{1}{2} \lambda \left[p - \left(z + \frac{C_{\kappa, \eta, K}}{\lambda} \right) \right]^2 %\nonumber \\
 %& & 
 -|p| \int_{\gamma = 0}  |\eta(e)|\nu(de) 
\enqs and
\beqs
\tilde f^{2}(z, u,g,p) & = & - p\hat{\eta}(g)\;. %\times K(g,\R)
%\int_{\gamma = g} \left[ \frac{\exp(\lambda(u(e) - p \eta(e))) - 1}{\lambda} - 
%u(e)  \right]\rho_m(u(e)) K(g, de) 
\enqs 
In particular, we get
\beq\label{lbdft1}
\inf_{p \in [-\underline{\pi}, \overline{\pi}]} \tilde f^1(z, u, p)
 & \geq & - z C_{\kappa, \eta} - \frac{C_{\kappa, \eta}^2}{2 \lambda}-(\underline \pi+\overline \pi)\int_{\gamma=0}|\eta(e)|\nu(de)\;. 
 %-z\int_{\gamma= 0}\eta(e)\nu(de)-\left(\frac{C_{\kappa,\eta,K}}{\lambda}\int_{\gamma= 0}\eta(e)\nu(de)\right)\int_{\gamma= 0}\eta(e)\nu(de)\;.
\enq
Still using the positivity of $h_\lambda$, % and since $\hat \eta(g)<0$ for $\mu$-a.a. $g\in\gamma(\R)\setminus\{0\}$, 
we get 
\beq\nonumber\label{lbdft2}
\int_{\gamma(\R)\setminus\{0\}}\inf_{p\in[ -\underline \pi, \overline \pi]} \tilde f^2(z, u,g,p)\mu(dg) & \geq &%\\
\int_{\gamma(\R)\setminus\{0\}}\inf_{p\in[ -\underline \pi, \overline \pi]}-p\int_{\gamma=g}\eta(e)K(g,de) \\ %& \geq &
%-(\underline \pi+\overline \pi) \int_{\gamma(\R)\setminus\{0\}}|\hat \eta(g)|\mu(dg)\\
 & \geq &-(\underline \pi+\overline \pi) \int_{\gamma(\R)\setminus\{0\}}|\eta(e)|\nu(de) \;.
\enq
We then get the lower bound for $f_m$ from \reff{lbdfg}, \reff{lbdft1} and \reff{lbdft2}.
%We conclude with positivity of the functional $g_\lambda$ and the lower bounds $p$.\\ 
%Same property  holds for $f^m$ using bounds of $\rho_m$.
%We turn to $f_{m}$. We first notice that $f_{m}\leq f$, therefore we get the same upper bound for $f_{m}$ as $f$.
%For the lower bound, the same arguments as for $f$ can be applied for $f_m$ as the penalization concerns only positive terms involving the function $h_\lambda$.
\end{proof}

We next present a result on $f$ allowing to compare the processes $Y^m$ for $m\geq1$.

\begin{Lemma}\label{LemComp} %Suppose that
%\beq\label{condexp-mom-eta}
%\int_{\R}\mathds{1}_{\eta(e)>1}e^{C\eta(e)}\nu(de) %& < & +\infty
%\enq
%for all $C>0$. 
For $m\geq1$, there exists a function $k_{m}:~L^\infty(\nu)\times L^\infty(\nu)\times \R\rightarrow \mathbb{R}$, %non-negative constant $C_3, \alpha$ 
satisfying the following properties.
\begin{enumerate}[(i)]

\item  The function $f_{m}$ satisfies 
\beqs
     f_{m}(z,u) - f_{m}(z,u') & \leq &  \int_{\R}k_{m}(u, u')(e)(u(e) - u'(e))\nu(de)
 \enqs
 for all $ z \in \mathbb{R}$  and $u, u' \in  L^{\infty}(\nu)\cap L^{2}(\nu)$. 
%\begin{equation}
 %   |f(z, u) - f(z', u)|\leq C_3( \alpha + |z| + |z'|)|z-z'|
%\end{equation}
%for all $z, z' \in \mathbb{R}, u \in L^{\infty}(\nu)$.
\item For each $M>0$, there exist two constants $\bar C_{M,m},\underline C_{M,m}>0$ such that
\beqs
\big(-1 + \underline{C}_{M,m}%e^{-\lambda (n+\underline{\pi})|\eta(e)|}
\big)\mathds{1}_{|e|>\frac{1}{m}}  &\leq & k_m(u, u')(e) ~~\leq~~\mathds{1}_{|e|>\frac{1}{m}}\bar C_{M,m}%\;,\quad e\in \R  \;,
\enqs
%and 
%\beqs
%\|k(u,u')\|_{L^2(\nu)} & \leq & \bar C_M
%\enqs
for all $u, u' \in  L^{\infty}(\nu)$  such that  $\|u\|_{L^{\infty}(\nu)},\, \|u'\|_{L^{\infty}(\nu)} \leq M$ and all $e\in \R  $.
\end{enumerate}
\end{Lemma}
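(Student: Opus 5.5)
The plan is a mean-value argument on the $u$-dependent integrands, combined with the elementary inequality $\inf_p A(p)-\inf_p B(p)\le A(\hat p)-B(\hat p)$, where $\hat p$ is a minimizer for the second function. The difference $f_m(z,u)-f_m(z,u')$ splits into a contribution from $\inf_p f^1_m$ and, for each $g$, a contribution from $\inf_p f^2_m(\cdot,g,\cdot)$. The terms $-zC_{\kappa,\eta}-C^2_{\kappa,\eta}/(2\lambda)$ cancel, and so do the linear terms in $p$ and the quadratic term in $z$, since $z$ is fixed. Let $\hat p_0$ be a minimizer of $p\mapsto f^1_m(z,u',p)$ over $[-\underline\pi,\overline\pi]$, and for each $g$ let $\hat p_g$ be a minimizer of $p\mapsto f^2_m(z,u',g,p)$. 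Both exist by continuity on a compact set. We then get
\begin{align*}
f_m(z,u)-f_m(z,u') \le{}& \int_{\gamma=0}\Big[h_\lambda\big(\varphi_m(u-\hat p_0\eta)\big)-h_\lambda\big(\varphi_m(u'-\hat p_0\eta)\big)\Big]d\nu_m\\
&+\int_{\gamma(\R)\setminus\{0\}}\int_{\gamma=g}\Big[\Psi_m(u(e),\hat p_g)-\Psi_m(u'(e),\hat p_g)\Big]\mathds{1}_{|e|>\frac1m}K(g,de)\mu(dg),
\end{align*}
where $\Psi_m(x,p)=h_\lambda(\varphi_m(x-p\eta(e)))\rho_m(x)$.

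The integrand in the second term only involves $e$ with $\gamma(e)=g$. By the disintegration formula we can therefore define a single $\hat p(e)$, equal to $\hat p_0$ on $\{\gamma=0\}$ and to $\hat p_{\gamma(e)}$ on $\{\gamma\neq0\}$, and rewrite everything as one integral against $\mathds{1}_{|e|>1/m}\nu(de)$. We then set
\[
k_m(u,u')(e)=\mathds{1}_{|e|>\frac1m}\,\frac{\Phi_e(u(e))-\Phi_e(u'(e))}{u(e)-u'(e)}\ \text{ if }u(e)\neq u'(e),
\]
where $\Phi_e$ is the relevant scalar function: $\Phi_e(x)=h_\lambda(\varphi_m(x-\hat p(e)\eta(e)))$ when $\gamma(e)=0$, and $\Phi_e(x)=\Psi_m(x,\hat p(e))$ otherwise. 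When $u(e)=u'(e)$ we set $k_m(u,u')(e)=\mathds{1}_{|e|>1/m}\Phi_e'(u(e))$. With this choice, (i) holds with equality after the infimum step. Measurability of $\hat p$ in $g$ requires a measurable selection. I would reuse the strict-convexity and Borel-selection construction from the proof of the preceding theorem, or else invoke a measurable selection theorem for the continuous function $p\mapsto f^2_m$ on a compact set.

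For (ii), the mean value theorem gives $k_m(u,u')(e)=\Phi_e'(\theta)$ for some $\theta$ between $u(e)$ and $u'(e)$, so $|\theta|\le M$. On $\{\gamma=0\}$ we have $\Phi_e'(\theta)=h_\lambda'(\varphi_m(\cdot))\varphi_m'(\cdot)=(e^{\lambda\varphi_m(\cdot)}-1)\varphi_m'(\cdot)$. Since $\varphi_m$ is bounded above by $m+\pi/2$ and $0<\varphi_m'\le1$, this gives the upper bound $e^{\lambda(m+\pi/2)}$. For the lower bound, note that the argument $\theta-\hat p\eta$ is at least $-M-(\underline\pi\vee\overline\pi)C$ by \reff{condetanu1}. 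In this region $\varphi_m'=1$ whenever the argument is $\le m$, so $\Phi_e'\ge e^{-\lambda(M+C(\underline\pi\vee\overline\pi))}-1$. Where the argument is $\ge m$, the derivative is nonnegative because $\varphi_m\ge0$ there. This gives $\underline C_{M,m}$.

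On $\{\gamma\neq0\}$ the product rule adds the term $h_\lambda(\varphi_m(\cdot))\rho_m'(\theta)$. Here $h_\lambda(\varphi_m)$ is bounded on the range in question and $|\rho_m'|\le1$, so the upper bound is fine. The lower bound is the \emph{main obstacle}. Where $\rho_m'=-1$, that is for $\theta\in(m,m+1)$, the extra term is negative and can be of order $h_\lambda(m+\pi/2)$, which would spoil the bound $k_m\ge-1+\underline C$. I would first handle this by taking $M$ relative to $m$. The lemma is applied to $Y^m$-related processes bounded uniformly, so for $m\ge M$ we have $\rho_m\equiv1$ on $[-M,M]$ and $\rho_m'(\theta)=0$. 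For $m<M$ I would try to absorb the term: on $(m,m+1)$ one has $\rho_m\in(0,1)$, and the first product term is at least $(e^{\lambda\varphi_m}-1)\varphi_m'\rho_m\ge0$. If necessary I would allow $\underline C_{M,m}$ to be small and positive, which forces a smallness check of $h_\lambda\rho_m'$ against $-1$. This is where I expect the argument to need care, and it may require restricting to $m\ge M$ or adjusting the constants.
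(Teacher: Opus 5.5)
Your computation on $\{\gamma=0\}$ is correct. The proof has a genuine gap exactly where you expect it: the lower bound in (ii) on $\{\gamma\neq0\}$ is not established for $M>m$. Neither absorbing the term $h_\lambda(\varphi_m)\rho_m'$ nor shrinking $\underline C_{M,m}$ can fix this, because with the factor $\rho_m(u(e))$ in $f^2_m$ that bound is false.

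Put $c_0=C(\underline\pi\vee\overline\pi)$ with $C$ from \reff{condetanu1}. For $x\in(m,m+1)$, the derivative of $x\mapsto h_\lambda(\varphi_m(x-p\eta(e)))\rho_m(x)$ is $\rho_m(x)\big(e^{\lambda\varphi_m(x-p\eta(e))}-1\big)\varphi_m'(x-p\eta(e))-h_\lambda(\varphi_m(x-p\eta(e)))$. Use $\varphi_m\le m+\frac{\pi}{2}$, $0<\varphi_m'\le 1$ and $\varphi_m(y)\ge y\wedge m$. Then, on $(m+1-\frac{1}{2}e^{-\lambda(m+\pi/2)},m+1)$, this derivative is at most $\frac{1}{2}-h_\lambda(m-c_0)\le-\frac{3}{2}$, uniformly in $p\in[-\underline\pi,\overline\pi]$ and $e$, as soon as $m\ge c_0$ and $h_\lambda(m-c_0)\ge 2$.

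Now take $A\subset\{\gamma\neq0\}\cap\{|e|>\frac{1}{m}\}$ with $0<\nu(A)<\infty$. Set $u=a\mathds{1}_A$ and $u'=b\mathds{1}_A$ with $a<b$ in that interval, so $M=m+1$ is admissible. The $f^1_m$ terms coincide, since $u=u'=0$ on $\{\gamma=0\}$. Apply $\inf_pF(u,p)-\inf_pF(u',p)\ge\inf_p\big(F(u,p)-F(u',p)\big)$ with $F(v,p)=f^2_m(z,v,g,p)$ for each $g$, and integrate in $\mu(dg)$. This gives $f_m(z,u)-f_m(z,u')\ge\frac{3}{2}(b-a)\nu(A)$. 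But any $k_m\ge-1+\underline C_{M,m}$ makes the right-hand side of (i) at most $(1-\underline C_{M,m})(b-a)\nu(A)$, a contradiction. Restricting to $M\le m$, as you suggest, would only prove a weaker statement than the one claimed.

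The paper's proof does not resolve this either. On the fibres $\{\gamma=g\}$ it uses the same increment $\mathds{1}_{|e|>\frac{1}{m}}\big(h_\lambda(\varphi_m(u(e)-p\eta(e)))-h_\lambda(\varphi_m(u'(e)-p\eta(e)))\big)$ as on $\{\gamma=0\}$, i.e.\ it argues as if $f^2_m$ had no factor $\rho_m(u(e))$. That factor can be dropped: Lemma~\ref{LemLipfm} holds without it, because $|(h_\lambda\circ\varphi_m)'|\le e^{\lambda(m+\pi/2)}$ and $\nu_m$ is finite. Once it is dropped, your $\{\gamma=0\}$ derivative bounds apply verbatim on $\{\gamma\neq0\}$ for every $M$.

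Two smaller discrepancies remain.
\begin{itemize}
\item Your $k_m$ depends on $z$ through the minimizer $\hat p_0$ of $f^1_m(z,u',\cdot)$, whereas the lemma asks for $k_m(u,u')$ independent of $z$.
\item The selection $g\mapsto\hat p_g$ cannot rest on strict convexity, since $h_\lambda\circ\varphi_m$ is not convex on $(m,\infty)$. Only a general measurable-selection theorem is available.
\end{itemize}
The paper's device avoids both. It bounds $\inf_pA-\inf_pB\le\sup_p(A-B)$ and moves the supremum inside the integrals pointwise in $e$. It then sets $k_m(u,u')(e)$ to be the supremum over $p$ of the difference quotient of $x\mapsto h_\lambda(\varphi_m(x-p\eta(e)))$ between $u'(e)$ and $u(e)$ where $u(e)\ge u'(e)$, and the infimum over $p$ where $u(e)<u'(e)$. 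Your derivative bounds are uniform in $p\in[-\underline\pi,\overline\pi]$, so they transfer to this $k_m$ unchanged.
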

\begin{proof} 

Let $ z \in \mathbb{R} $ and  $u, u' \in L^{\infty}(\nu(de))$. Using the inequality
\beqs
\inf_{\pi} A^\pi - \inf_{\pi} B^\pi & \leq & \sup_{\pi} (A^\pi - B^\pi)
\enqs
we get
\beqs
f_{m}(z, u) - f_{m}(z, u')  & \leq &   \sup_{ p\in [-\underline{\pi},\overline{\pi}] } \int_{\gamma = 0} \Delta^{u,u'}(e,p) \nu(de) 
\\
 & &   + \int_{\gamma(\R) \setminus \{0\}} \sup_{p\in [-\underline{\pi},\overline{\pi}]}\int_{\gamma =g}\Delta^{u,u'}(e,p) K(g, de) %\mathds{1}_{|e|>\frac{1}{m}}
 \mu(dg).
\enqs
with 
\beqs
\Delta^{u,u'}(e,p) & = & \mathds{1}_{|e|>\frac{1}{m}}\left(h_{\lambda}\Big(\varphi_m\big(u(e)-p\eta(e)\big)\Big) - h_{\lambda}\Big(\varphi_m\big(u'(e)-p\eta(e)\big)\Big) \right)
\enqs
for $(e,p)\in \R\times  \R$. 
From Taylor's formula, we have
\beqs
\Delta^{u,u'}(e,p) & = &  
\delta^{u,u'}(p,e)
\left(u(e)-u'(e)\right)
\enqs
with
\beqs
\delta^{u,u'}(p,e) & = & \mathds{1}_{|e|>\frac{1}{m}}\int_0^1 \varphi_m'\big((\alpha u+ (1-\alpha) u')(e)\big)h'_\lambda \left(\varphi_m\big( (\alpha u+ (1-\alpha) u' - p\eta)(e) \big)\right)  d\alpha\;.
\enqs
Since $\varphi_m$ and $\varphi_m'$ are bounded, and $\|u\|_{L^{\infty}(\nu)},\, \|u'\|_{L^{\infty}(\nu)} \leq M$, there exists a constant $C_M$ depending on $M$ such that
\beqs
|\delta^{u,u'}(p,e)| & \leq & \mathds{1}_{|e|>\frac{1}{m}}C_M(|u(e)|+|u'(e)|+|p||\eta(e)|)\;,\quad e\in \R  \;.
\enqs
Finally, we get
\beqs
f_{m}(z, u) - f_{m}(z, u')  & \leq &  \sup_{ p\in[ -\underline{\pi} , \overline{\pi} ]} \int_{\gamma = 0} \delta^{u,u'}(p,e)
\left(u(e)-u'(e)\right) \nu(de) 
\\
 & &   + \int_{\gamma(\R) \setminus \{0\}} \sup_{\overline{\pi}\geq p\geq -\underline{\pi}}\int_{\gamma =g} \delta^{u,u'}(\alpha,p,e)
\left(u(e)-u'(e)\right) K(g, de) \mu(dg)\\
 & \leq & \int_\R k_m(u,u')(e)(u(e)-u'(e))\nu(de) 
\enqs
with
\beqs
k_{m}(u,u')(e) & = & \mathds{1}_{|e|>\frac{1}{m}}\left[\mathds{1}_{u(e)\geq u'(e)}\left(\sup_{ p\in[ -\underline{\pi}, \overline{\pi} ]}\delta^{u,u'}(p,e) \right)+\mathds{1}_{u(e)\leq u'(e)}\left(\inf_{ p\in[ -\underline{\pi}, \overline{\pi} ]}\delta^{u,u'}(p,e) \right)\right]
\enqs
for $e\in \R  $. 
From the definition of $\varphi_m$,  and for $M>0$ such that $|u|_{L^\infty(\nu)},|u'|_{L^\infty(\nu)}\leq M$, there exists a constant $C_M>0$ such that
\beqs
1~~\geq~~\varphi_m'\big((\alpha u+ (1-\alpha) u')(e)\big) & \geq & C_M
\enqs
for $\nu$-a.a. $e\in \R$. Therefore, we get 
\beqs
\mathds{1}_{|e|>\frac{1}{m}}\left(\int_0^1 \exp \left(\lambda\varphi_m\big( (\alpha u+ (1-\alpha) u' - p\eta)(e) \big)\right)  d\alpha\right) & \geq & \delta^{u,u'}(p,e) 
\enqs
and
\beqs
\delta^{u,u'}(p,e) & \geq & \mathds{1}_{|e|>\frac{1}{m}}\left(C_M\int_0^1 \exp \left(\lambda\varphi_m\big( (\alpha u+ (1-\alpha) u' - p\eta)(e) \big)\right)  d\alpha-1\right)\;.
\enqs
%Still using $\|u\|_{L^{\infty}(\nu)},\, \|u'\|_{L^{\infty}(\nu)} \leq M$ we get from the definition of $\varphi_m$ 
Since $\varphi_m$ is upper bounded, there exists two constant $\overline C_{M,m},\underline C_{M,m}>0$ such that
\beqs
\overline C_{M,m}\mathds{1}_{|e|>\frac{1}{m}}~~\geq~~\delta^{u,u'}(p,e) & \geq & \mathds{1}_{|e|>\frac{1}{m}}\left(\underline{C}_{M,m}  -1\right)
\enqs
for all $p\in[-\underline \pi,\overline\pi]$ and all $e\in \R$. From the definition of $k_m$ we get
\beqs
\overline C_{M,m}\mathds{1}_{|e|>\frac{1}{m}}~~\geq~~k_m(u, u')(e) &\geq &  \big(-1 + \underline{C}_{M,m}\big)\mathds{1}_{|e|>\frac{1}{m}}   \;,\quad e\in \R.
\enqs
\end{proof}
% Using \eqref{BorneUnifYm}, we deduce that it converges and we denote by $\tilde Y$ its limit.   

We now have the following result, which firstly provides the monotonicity of the sequence $(Y^m)_{m\geq 1}$, and secondly a uniform bound on the sequence $(Y^m,Z^m,U^m)_{m\geq 1}$ similar to 
% Lemma \ref{LemQuadGrowfm} and 
  Lemma 3 in \cite{MAM08}
 \begin{Proposition}\label{Prop ymbdd} (i) The sequence $(Y^m)_{m\geq 1}$ is nondecreasing in the following sense
 \beqs
 Y^{m+1}_t & \geq & Y^m_t\;,\quad \P-a.s.
 \enqs
 for all $t\in[0,T]$ and all $m\geq1$.

\noindent (ii) There exists a constant $C$ such that
\beq\label{BorneUnifYm}
    |Y_t^{m}|  & \leq & C\quad ~\P-a.s.
\enq
for all $t\in[0,T]$, and 
\beq\label{BorneUnifZUm}
\E\Big[\int_0^T|Z_s^{m}|^2ds+\int_0^T\int_\R|U^{m}_s(e)|^2\nu(de)ds\Big]   & \leq & C
\enq
for all $m\geq1$.
\end{Proposition}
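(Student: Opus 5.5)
We prove (i) with a comparison theorem for Lipschitz BSDEJs, and (ii) with an exponential transform in the spirit of \cite[Lemma 3]{MAM08}, using the two-sided growth bounds of Lemma \ref{LemQuadGrowfm}. Throughout we assume, as in the setting of \cite{MAM08}, that $F$ is bounded, say $|F|\leq \|F\|_\infty$.

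\textbf{Step 1: monotonicity.} For (i), we write the difference $\delta Y=Y^{m+1}-Y^m$ as the solution of a linear BSDEJ. By Lemma \ref{LemLipfm}(i), $f_{m+1}(Z^m,U^m)\geq f_m(Z^m,U^m)$, so the driver of the $(m+1)$-equation dominates the $m$-driver along the solution $(Z^m,U^m)$. We then linearize the difference $f_{m+1}(Z^{m+1},U^{m+1})-f_{m+1}(Z^m,U^m)$:
\begin{itemize}
\item in $z$, by a bounded difference quotient, bounded thanks to Lemma \ref{LemLipfm}(ii);
\item in $u$, by Lemma \ref{LemComp}(i) with the kernel $k_{m+1}(U^{m+1},U^m)$.
\end{itemize}
Lemma \ref{LemComp}(ii) gives $k_{m+1}>-1$ and $k_{m+1}$ bounded, since $\underline C_{M,m}>0$. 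This is the classical condition for comparison of BSDEJs (Royer's theorem). It yields a positive Girsanov density, under which $\delta Y_t$ is the conditional expectation of nonnegative quantities, so $\delta Y\geq0$.

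A technical point is that Lemma \ref{LemComp} requires $U^m,U^{m+1}\in L^\infty(\nu)$. We would check this from $|\Delta Y^m|\leq 2\|Y^m\|_\infty$: since $U^m_t(e)=\Delta Y^m_t$ on jumps, $U^m$ is bounded $\P\otimes dt\otimes\nu$-a.e. Boundedness of $Y^m$ for fixed $m$ follows from the Lipschitz property together with $f_m(0,0)$ bounded.

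\textbf{Step 2: uniform bound on $Y^m$.} For (ii), we compare $Y^m$ with BSDEs whose drivers do not depend on $m$.

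\emph{Lower bound.} The lower bound in Lemma \ref{LemQuadGrowfm} is linear in $z$: $f_m(z,u)\geq -zC_{\kappa,\eta}-c$. By Girsanov with the bounded kernel $C_{\kappa,\eta}$,
\[
Y^m_t\geq \E^{\mathbb Q}\big[F-c(T-t)\,\big|\,\Fc_t\big]\geq -\|F\|_\infty-cT.
\]
This can also be obtained by comparison with the linear BSDE.

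\emph{Upper bound.} Since $f_m(z,u)\leq \frac\lambda2 z^2+|u|_\lambda$, the process $e^{\lambda Y^m}$ is a local submartingale. Indeed, Itô's formula gives
\[
d e^{\lambda Y^m_t}=e^{\lambda Y^m_{t-}}\Big(\lambda\big(-f_m+\tfrac{\lambda}{2}|Z^m|^2+|U^m|_\lambda\big)dt+\text{local martingale}\Big),
\]
and the drift is nonnegative. Because $Y^m$ is bounded for fixed $m$, we localize and pass to the limit to get $e^{\lambda Y^m_t}\leq \E[e^{\lambda F}|\Fc_t]$. This gives the bound uniformly in $m$.

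\textbf{Step 3: uniform bound on $(Z^m,U^m)$.} To get \eqref{BorneUnifZUm}, we apply Itô's formula to $\phi(Y^m)$ with $\phi(y)=\frac{1}{\lambda^2}(e^{\lambda y}-1-\lambda y)$, or alternatively to $e^{\beta Y^m}$ for a suitable $\beta$, following \cite{MAM08}.

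The quadratic term $\frac\lambda2|Z|^2$ in the upper bound is absorbed by the convexity term $\frac12\phi''(Y)|Z|^2$. The jump term $|U|_\lambda$ is controlled by the nonlocal Itô term
\[
\int\big(\phi(Y_-+U)-\phi(Y_-)-\phi'(Y_-)U\big)\nu(de),
\]
which by convexity of $\phi$ dominates $c|U|^2$ on the bounded range of $Y$. Using $|Y^m|\leq C$ and taking expectations after localization, we obtain $\E\int|Z^m|^2+\int\!\!\int|U^m|^2\nu\,ds\leq C$, with $C$ independent of $m$. The lower bound on $f_m$ is used to control the negative part of the drift by $C(1+|Z|)$, which is then absorbed via Young's inequality.

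\textbf{Main obstacle.} We expect Step 3 to be the hardest. One must choose the test function so that the jump convexity term genuinely dominates $|U|_\lambda$ times $\phi'(Y)$ uniformly in $m$. This works because $|U^m|\leq 2C$ uniformly, so $h_\lambda(U)\leq c_C|U|^2$ on that range. A second delicate point, in Step 1, is verifying that the $u$-boundedness constant $M$ in Lemma \ref{LemComp} can be taken finite for each fixed $m$.
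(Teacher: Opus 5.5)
Your proof is correct. Part (i) and the lower bound in (ii) follow the paper: comparison via Lemmata \ref{LemLipfm} and \ref{LemComp}, and comparison with the linear BSDE with driver $-zC_{\kappa,\eta}-c$. Your check that $U^m$ is $\P\otimes dt\otimes\nu$-a.e.\ bounded, which Lemma \ref{LemComp} requires, is a point the paper leaves implicit.

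You depart from the paper in two places.

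\emph{Upper bound on $Y^m$.} The paper builds the explicit solution $\overline Y=\frac1\lambda\log\mathbb{E}[e^{\lambda F}+1|\Fc_t]$ of the BSDE with driver $\frac\lambda2 z^2+|u|_\lambda$. It then applies the appendix comparison theorem with $g^1=f_m$ and $g^2=\overline f$. You instead apply It\^o's formula directly to $e^{\lambda Y^m}$ and note that it is a bounded local submartingale. Your route is shorter. It also avoids checking that $(\overline Y,\overline Z,\overline U)$ lies in the right spaces and that $\overline U$ is bounded, as the comparison theorem needs.

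\emph{The estimate \eqref{BorneUnifZUm}.} The paper's argument is much lighter than yours. It applies It\^o's formula to $(Y^m-C)^2$, with $C$ the uniform bound, so that $Y^m-C\le 0$. The affine lower bound on $f_m$ alone then gives $2(Y^m-C)f_m(Z^m,U^m)\le 4C(|C_{\kappa,\eta}||Z^m|+c)$, and Young's inequality closes the estimate. This uses neither the quadratic-exponential upper bound nor any bound on $U^m$.

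Your Morlais-type argument also works, but for a different reason than the one you give. With $\phi(y)=\lambda^{-2}(e^{\lambda y}-1-\lambda y)$ there are two exact cancellations: $\phi''-\lambda\phi'=1$, and $\phi(y+u)-\phi(y)-\phi'(y)u-\phi'(y)h_\lambda(u)=\lambda^{-1}h_\lambda(u)$. These handle the set $\{Y^m\ge 0\}$, where $\phi'\ge 0$ and you must use the upper bound. On $\{Y^m<0\}$ you use the lower bound instead.

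The uniform bound $|U^m|\le 2C$ is needed only to pass from $|U^m|_\lambda$ to $\int|U^m|^2\,d\nu$, via $h_\lambda(u)\ge c\,u^2$ on $[-2C,2C]$. Your stated reason, $h_\lambda(U)\le c_C|U|^2$, points the wrong way: it cannot make the jump term dominate $\phi'(Y)|U|_\lambda$. Similarly, with $e^{\beta Y}$ and $\beta=2\lambda$ the jump contribution becomes exactly $e^{2\lambda Y}(e^{\lambda U}-1)^2$, and only the upper bound is used.
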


\begin{proof}
(i) We first observe that the sequence $(f_{m}(z,u))_{m\geq1}$ is nondecreasing for each $(z,u)\in \R\times L^2(\nu)$. From Lemmata \ref{LemLipfm} and \ref{LemComp}, we can apply comparison Theorem \ref{ComparisonBSDEJ}, and we get that the sequence $(Y^{m})_{m\geq1}$ is a.s. nondecreasing.

(ii) Define $(\underline Y,\underline Z,\underline U)\in S^\infty \times L^2(W) \times L^2(\tilde{N}) $ as the solution to the BSDE
\beqs
\underline Y_t = F + \int_{t}^{T} \underline f( \underline Z_s,\underline U_s) \, ds - \int_{t}^{T}\underline Z_s \, dW_s - \int_{t}^{T} \int_{\R} \underline U_s(e) \, \tilde{N}(ds, de), \quad t \in [0, T]\;,   
\enqs
where $\underline f$ is defined by
\beqs
\underline f (z,u) & = &  -zC_{\kappa, \eta} - \frac{C_{\kappa, \eta}^2}{2\lambda}%\\
  %& & 
  -(\underline \pi+\overline \pi) \int_{\R}| \eta(e)|\nu(de)
%
%-z\int_{\gamma= 0}\eta(e)\nu(de)-\left(\frac{C_{\kappa,\eta,K}}{\lambda}\int_{\gamma= 0}\eta(e)\nu(de)\right)\int_{\gamma= 0}\eta(e)\nu(de)\\
%  & & -(\underline \pi+\overline \pi) \int_{\gamma(\R)\setminus\{0\}}|\hat \eta(g)|\mu(dg)
\enqs
for all $(z,u)\in\R\times L^2(\nu)$. The triple $(\underline Y,\underline Z,\underline U)\in S^\infty \times L^2(W) \times L^2(\tilde{N})$ is well defined since $F$ is bounded and $\underline f$ is Lipschitz continuous. %Moreover, we have the following explicit expression for the component $Y$. Since  $\underline f$ does not involve the component $u$, 
Since $\underline f$ does not depend on the variable $u$, we can apply comparison Theorem \ref{ComparisonBSDEJ}, which gives
\beq\label{LbdYm}
Y^m_t & \geq & \underline Y_t \qquad\P-a.s.
\enq
for all $t\in[0,T]$ and all $m\geq 1$.

To get an upper bound, we first apply the martingale representation theorem to the process  (see e.g. \cite{Kunita04}).
\beqs
\check Y_t & := & \E\big[ {}\exp(\lambda F)+1|\Fc_t\big]\;,\quad t\in[0,T]\;.
\enqs
 Therefore, there exists
 $(\check Z,\check U)\in L^2(W) \times L^2(\tilde{N})$such that
 \beqs
 \check Y_t  & = & \exp(\lambda F)+1-\int_t^T \check Z_s dW_s-\int_t^T\int_\R \check U_s(e) \tilde N(de,ds)\;,\quad t\in[0,T]\;.
 \enqs
 Applying, It\^o's formula to the process $\frac{1}{\lambda} \log(\check Y)$ we get that the triple $(\overline Y,\overline Z,\overline U)\in S^\infty \times L^2(W) \times L^2(\tilde{N}) $ defined by
 \beqs
 \overline Y_t & = & \frac{1}{\lambda} \log(\check Y_t)\\
 \overline Z_t & = & \frac{Z_t}{\lambda \check Y_t}\\
 \overline U_t(e) & = & \log\Big( \frac{\check U_t(e)}{Y_{t-}} +1\Big)
 \enqs
%
%We now define Define $(\overline Y,\overline Z,\overline U)\in S^\infty \times L^2(W) \times L^2(\tilde{N}) $ as the solution to the BSDE
for $(t,e)\in[0,T]\times \R$, satisfies
\beqs
\overline Y_t = \overline F + \int_{t}^{T} \overline f( \overline Z_s,\overline U_s) \, ds - \int_{t}^{T}\overline Z_s \, dW_s - \int_{t}^{T} \int_{\R} \overline U_s(e) \, \tilde{N}(ds, de), \quad t \in [0, T]\;,   
\enqs
where 
\beqs
\overline F & := & \frac{1}{\lambda}\log\Big(\exp\big(\lambda F\big)+1\Big)
\enqs
and $\overline f$ is defined by
\beqs
\overline{f}(z,u) & = & \frac{\lambda}{2} z^2 + |u|_\lambda
\enqs
for all $(z,u)\in\R\times L^2(\nu)$. We observe that $\overline F\ge F$ and from Lemma \ref{LemQuadGrowfm} we have $\overline f\geq f_m$. From Lemmata \ref{LemLipfm} and \ref{LemComp}, we can apply comparison Theorem \ref{ComparisonBSDEJ} and we get 
\beq\label{UbdYm}
Y^m_t & \leq & \overline Y_t \qquad\P-a.s.
\enq
for all $t\in[0,T]$ and all $m\geq 1$.
From \reff{LbdYm} and \reff{UbdYm}, we get \reff{BorneUnifYm}.

We turn to \reff{BorneUnifZUm}. For that, we apply It\^o's formula to $(Y^m-C)^2$ where $C$ is a constant satisfying \reff{BorneUnifYm}. We then get
\beq\nonumber
\E\big[(Y^m_0-C)^2\big] & = & \E\big[(F-C)^2\big]+\E\Big[\int_0^T2(Y^m_s-C)f(Z^m_s,U^m_s)ds\Big]\\
 & &  -\E\Big[\int_0^T|Z^m_s|^2ds\Big]-\E\Big[\int_0^T \int_\R |U^m_s(e)|^2\nu(de)ds\Big]\;.\label{ItoestimZmUm}
\enq
From Lemma \ref{LemQuadGrowfm}, there exists two constants $C_1$ and $C_2$ such that
\beqs
f_m(z,u) & \geq & C_1z+C_2
\enqs
for all $z\in\R$, $u\in L^2(\nu)$ and $m\geq1$. Since $C$ satisfies \reff{BorneUnifYm}, we have
\beqs
0~~\geq~~(Y^m_s-C) & \geq & -2C\;,\quad s\in[0,T]\;.
\enqs
Therefore, we get
\beqs
(Y^m_s-C)f(Z^m_s,U^m_s) & \leq & -2C(C_1Z^m_s+C_2)\;,\quad s\in[0,T]\;.
\enqs
From Young's inequality, we get
\beqs
(Y^m_s-C)f(Z^m_s,U^m_s) & \leq &\frac{1}{2}|Z^m_s|^2+2C|C_2|+2|CC_1|^2\;.
\enqs
Plugging this inequality into \reff{ItoestimZmUm}, we get
\beqs
 \frac{1}{2}\E\Big[\int_0^T|Z^m_s|^2ds\Big]+\E\Big[\int_0^T \int_\R |U^m_s(e)|^2\nu(de)ds\Big] & \leq & 4C^2+2T(C|C_2|+|CC_1|^2)
\enqs
for all $m\geq1$.
\end{proof}

\begin{Lemma}\label{lem-ref-f} The function $f$ satisfies the following properties.
\begin{enumerate}[(i)]
\item  There exists a constant $C$ such that 
\beq\label{locLipfz}
    |f(z, u) - f(z', u)| & \leq &  C( 1 + |z| + |z'|)|z-z'|
\enq
for all $z, z' \in \mathbb{R}, u \in L^{2}(\nu)\cap L^{\infty}(\nu)$.
\item The function $u\mapsto f(z,u)$ is continuous on $L^\infty(\nu)\cap L^2(\nu)$ in the following sense: for any sequence $(u^m)\in L^2(\nu)\cap L^\infty(\nu)$ converging in  $L^2(\nu)$ to $u\in L^2(\nu)\cap L^\infty(\nu)$ and bounded in $L^\infty(\nu)$ we have %$
%for the norm $|\cdot|_{L^2(\nu)}$ for any $z\in\R$.
\beqs
f(z,u^m) & \xrightarrow[m\rightarrow+\infty]{} & f(z,u)
\enqs
for any $z\in\R$.
 \end{enumerate}
\end{Lemma}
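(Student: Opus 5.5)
For (i), I will use that $z$ enters $f$ only through $\inf_{p\in[-\underline\pi,\overline\pi]}f^1(z,u,p)$ and the affine term $-zC_{\kappa,\eta}-\frac{C_{\kappa,\eta}^2}{2\lambda}$. The affine part is globally Lipschitz with constant $|C_{\kappa,\eta}|$, and the integral over $\gamma(\R)\setminus\{0\}$ does not depend on $z$. I will again use the elementary inequality $|\inf_p A^p-\inf_p B^p|\leq\sup_p|A^p-B^p|$, which was already used in the proof of Lemma \ref{LemComp}. The $u$-dependent part of $f^1$ cancels in the difference, so
\[
\Big|\inf_p f^1(z,u,p)-\inf_p f^1(z',u,p)\Big|\leq \frac{\lambda}{2}\sup_{p\in[-\underline\pi,\overline\pi]}\Big|\big(p\sigma-z-\tfrac{C_{\kappa,\eta}}{\lambda}\big)^2-\big(p\sigma-z'-\tfrac{C_{\kappa,\eta}}{\lambda}\big)^2\Big|.
\]
Factoring the difference of squares bounds the right-hand side by $\frac{\lambda}{2}|z-z'|\,\big(|z|+|z'|+2\sigma(\underline\pi+\overline\pi)+2|C_{\kappa,\eta}|/\lambda\big)$. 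This gives \eqref{locLipfz} with a constant $C$ depending only on $\lambda,\sigma,\underline\pi,\overline\pi,C_{\kappa,\eta}$.

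For (ii), I fix $z$ and take $u^m\to u$ in $L^2(\nu)$ with $\|u^m\|_{L^\infty(\nu)},\|u\|_{L^\infty(\nu)}\le M$. Applying the same inf/sup inequality, once on the $\gamma=0$ part and once pointwise in $g$ before integrating in $\mu$, reduces the claim to showing that
\[
\sup_{p}\int_{\gamma=0}\big|\Delta_m(e,p)\big|\nu(de)+\int_{\gamma(\R)\setminus\{0\}}\sup_p\int_{\gamma=g}\big|\Delta_m(e,p)\big|K(g,de)\mu(dg)\;\leq\;2\int_\R\sup_p|\Delta_m(e,p)|\,\nu(de)
\]
tends to $0$. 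Here $\Delta_m(e,p)=\lambda^{-1}\big(e^{\lambda(u^m(e)-p\eta(e))}-e^{\lambda(u(e)-p\eta(e))}\big)-(u^m(e)-u(e))$. The bound by the integral of $\sup_p$ uses the disintegration $\nu(de\cap\{\gamma\neq0\})=\int K(g,de)\mu(dg)$.

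The key estimate is a second-order Taylor expansion of $x\mapsto (e^{\lambda x}-1)/\lambda - x$ around $x=-p\eta(e)$. This function has derivative $e^{\lambda x}-1$ and its second derivative is bounded on the relevant range, since $|u^m|,|u|\le M$ and $|p\eta(e)|\le (\underline\pi+\overline\pi)C$ by \eqref{condetanu1}. The expansion gives
\[
|\Delta_m(e,p)|\le |e^{-\lambda p\eta(e)}-1|\,|u^m(e)-u(e)| + C_M\big(|u^m(e)|^2+|u(e)|^2\big)\mathds 1_{\{u^m\neq u\}}\wedge C_M'|u^m(e)-u(e)|.
\]
A cleaner form is $|\Delta_m|\le C_M\big(|\eta(e)|+|u^m(e)|+|u(e)|\big)|u^m(e)-u(e)|$, because $|e^{-\lambda p\eta}-1|\le C|\eta|$ and the remainder is bounded by $C_M(|u^m|+|u|+|\eta|)|u^m-u|$ via the mean value theorem applied to $e^{\lambda x}-1$. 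Cauchy–Schwarz then yields
\[
\int\sup_p|\Delta_m|\,d\nu\le C_M\big(\|\eta\|_{L^2(\nu)}+\|u^m\|_{L^2(\nu)}+\|u\|_{L^2(\nu)}\big)\|u^m-u\|_{L^2(\nu)}\to0,
\]
where $\eta\in L^2(\nu)$ follows from \eqref{condetanu1} and the integrability of $1\wedge|e|^2$, and $\|u^m\|_{L^2(\nu)}$ stays bounded because $u^m$ converges in $L^2(\nu)$.

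I expect the main obstacle to be justifying the interchange of $\sup_p$ with the $\mu(dg)$-integral and the measurability of $g\mapsto\inf_pf^2(g,u,p)$. I will handle measurability by restricting the infimum to rational $p$, which is legitimate because $f^2$ is continuous in $p$ by dominated convergence, as noted in the proof of the preceding Theorem. The pointwise domination by $\sup_p|\Delta_m|$, whose bound above does not depend on $p$, then makes the interchange harmless.
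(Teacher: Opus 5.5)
Your proposal is correct. Part (i) is the paper's argument: you use the inf/sup inequality, note that the $u$-dependent terms cancel, and factor the difference of squares.

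Part (ii) takes a different route to the key estimate. You and the paper make the same reduction: using $|\inf_p A^p-\inf_p B^p|\le\sup_p|A^p-B^p|$ and the disintegration of $\nu$ on $\{\gamma\neq0\}$, both reduce to showing that $\int_\R\sup_p|h_\lambda(u^m(e)-p\eta(e))-h_\lambda(u(e)-p\eta(e))|\,\nu(de)\to0$. From there the two arguments diverge.

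\begin{itemize}
\item The paper argues softly. It uses uniform continuity of $h_\lambda$ on a compact interval and splits $\R$ into $\{|u-u^m|\le\alpha\}$ and $\{|u-u^m|>\alpha\}$. The second piece is controlled by the Chebyshev bound $\nu(|u-u^m|>\alpha)\le\alpha^{-2}\|u-u^m\|^2_{L^2(\nu)}$. The first piece is controlled by a term of the form $\varepsilon\wedge(|u-p\eta|^2+|u^m-p\eta|^2)$. Finally $\varepsilon\to0$ by dominated convergence.
\item You instead exploit $h_\lambda'(0)=0$. The mean value theorem gives $|h_\lambda(x)-h_\lambda(y)|\le C_M(|x|+|y|)|x-y|$ on bounded sets. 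Cauchy--Schwarz, together with $\eta\in L^2(\nu)$ (which follows from the bound $|\eta(e)|\le C(1\wedge|e|)$), then finishes.
\end{itemize}

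Your route is shorter and quantitative. It gives $|f(z,u)-f(z,u')|\le C_M\big(\|\eta\|_{L^2(\nu)}+\|u\|_{L^2(\nu)}+\|u'\|_{L^2(\nu)}\big)\|u-u'\|_{L^2(\nu)}$ uniformly in $z$ on $L^\infty(\nu)$-balls. That is local Lipschitz continuity in the $L^2(\nu)$ norm, not just sequential continuity.

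Your route also avoids the paper's side remarks in (ii), namely ``$\eta\in(-1,0]$'' and that the second infimum is attained at $p=-\underline\pi$. These rely on a sign restriction on $\eta$ that is not part of the standing assumptions, and they play no role in the argument. The paper's approach uses only continuity of $h_\lambda$ and its quadratic behaviour at the origin, so it would survive for a less regular integrand; that generality is not needed here.

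Your first displayed Taylor bound, the one with the $\wedge$ and the indicator, is superfluous. The mean-value bound you call the cleaner form is what actually carries the proof.
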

\begin{proof} %\textcolor{red}{We need compactness to compute sup here}

    (i) Let $u \in L^{2}(\nu)\cap L^{\infty}(\nu)$ and $z, z' \in \mathbb{R}$.\\
We have 
\begin{align*}
f(z,u) - f(z',u) 
& \leq \frac{\lambda}{2}\sup_{p\in[-\underline \pi,\overline \pi]} \Big( [p \sigma  - (z + \tfrac{C_{\kappa, \eta}}{\lambda})]^2 - [p \sigma - (z' + \tfrac{C_{\kappa, \eta}}{\lambda})]^2 \Big) - C_{\kappa, \eta}(z-z') \\
& \leq \frac{\lambda}{2} |z-z'| \sup_{p\in[-\underline \pi,\overline \pi]} \big|2p\sigma - (z+z' + \tfrac{2C_{\kappa, \eta}}{\lambda})\big| + |C_{\kappa, \eta}|\,|z-z'|.
\end{align*}
Since $p \in [-\underline \pi,\overline \pi]$, and applying the same argument to $f(z',u) - f(z,u)$ yields \eqref{locLipfz} with 
\beqs
C & = & \frac{\lambda}{2}[2\sigma (\underline \pi+\overline \pi)+1]+ 2 C_{\kappa, \eta}\;.
\enqs
    
    %Let $u \in L^{2}(\nu)\cap L^{\infty}(\nu)$ and $z, z' \in \mathbb{R}$.\\We have \beq f(z,u) - f(z',u) & \leq &  \frac{\lambda}{2}\sup_{p\in[-\underline \pi,\overline \pi]} ([p \sigma  - (z + \frac{C_{\kappa, \eta, K}}{\lambda})]^2 - [p \sigma - (z' + \frac{C_{\kappa, \eta, K}}{\lambda})]^2) - C_{\kappa, \eta, K}(z-z')\\ & \leq & \frac{\lambda}{2}\sup_{p\in[-\underline \pi,\overline \pi]} [2p - (z+z')][z-z'] + C_{\kappa, \eta, K}(1-\lambda)(z-z')\\& \leq & \Big(\frac{\lambda}{2}[2(\underline \pi+\overline \pi)+|z|+|z'|)+C_{\kappa, \eta, K}(1+\lambda)\Big)|z-z'|.\enqs Applying the same argument for $f(z',u) - f(z,u)$ we get \reff{locLipfz}  with \beqs C & = & \frac{\lambda}{2}[2(\underline \pi+\overline \pi)+1]+C_{\kappa, \eta, K}(1+\lambda)\;.\enqs

    (ii) Fix $z\in\R$ and $(u^m)_m$ a sequence of $L^{2}(\nu)\cap L^{\infty}(\nu)$ bounded in $L^{\infty}(\nu)$ and converging in $L^{2}(\nu)$ to $u \in L^{2}(\nu)\cap L^{\infty}(\nu)$. Since $\eta\in(-1,0]$, we notice that in the definition of $f$, we can restrict the first infimum to $p\in[-\underline{\pi},(z+\frac{C_{\kappa,\eta}}{\lambda})\vee (-\underline{\pi})]$.
    We also notice that the second infimum is obtained for $p=-\underline\pi$. 
    From the inequality
\begin{align}
\inf_{\pi} A^\pi - \inf_{\pi} B^\pi \leq \sup_{\pi} (A^\pi - B^\pi)
\end{align}
we get
\beqs
    f(z,u)-f(z,u^m) & \leq & \sup_{p\in [-\underline \pi,\overline \pi]}\int_{\gamma=0} \Delta^m(e,p) \nu(de)
    \\
     & & + \int_{\gamma(\R)\setminus \{0\}}\sup_{p\in [-\underline \pi,\overline \pi]}\int_{\gamma=g} \Delta^m(e,p) K(g,de)\mu(dg)
    \\
\enqs
where 
\beqs
\Delta^m(e,p) & = & |h_\lambda(u(e)+p\eta(e))-h_\lambda(u^m(e)+p\eta(e))|\;,\quad e,p\in\R\;.
\enqs
Since $(u^m)_{m\geq1}$ is bounded in $L^{\infty}(\nu)$ and $u\in L^\infty(\nu)$, there exists a constant $C$ such that
\beq\label{ubdd}
|u(e)-p\eta(e)| & \leq & C\\
|u^m(e)-p\eta(e)| & \leq & C \label{umbdd}
\enq
for all $e\in\R$, $p\in [-\underline \pi,\overline \pi]$ and $m\geq1$.

Fix $\eps>0$. Since $h_\lambda$ is continuous, it is uniformly continuous on $[-C,C]$. Therefore, there exists some $\alpha\in(0,1)$ such that
\beqs
\sup_{|x-y|\leq \alpha}|h_\lambda(x)-h_\lambda(y)| & \leq & \eps\;.
\enqs
Moreover, by applying Taylor's formula to $h_\lambda$, there exists a constant $C'$ such that
\beqs
|h_\lambda(x)| & \leq & C'|x|^2\;,\quad x\in {[-C,C]}\;.
\enqs
We therefore have
\beqs
\sup_{p\in [-\underline \pi,\overline \pi]}\int_{\gamma=0} \Delta^m(e,p) \nu(de) & \leq & A^m+B^m 
\enqs
where
\beqs
A^m & = & \sup_{p\in [-\underline \pi,\overline \pi]}\int_{\gamma=0}\mathds{1}_{|u-u^m|\leq\alpha}(e)\eps\wedge\big(|u(e)-p\eta(e)|^2+|u^m(e)-p\eta(e)|^2\big)\nu(de)\;,\\
B^m & = & \sup_{p\in [-\underline \pi,\overline \pi]}\int_{\gamma=0}\mathds{1}_{|u-u^m|>\alpha}(e)|h_\lambda (u(e)-p\eta(e))-h_\lambda(u^m(e)-p\eta(e))|\nu(de)\;.
\enqs
From \reff{ubdd}-\reff{umbdd} we have
\beq\label{convBm}
B^m & \leq & 2\sup_{[-C,C]}|h_\lambda|\int_{\gamma=0}\frac{|u(e)-u^m(e)|^2}{\alpha^2}\nu(de)~~ \xrightarrow[m\rightarrow+\infty]{}~~0\;.
\enq
We next have
\beq\label{estimAm}
A_m & \leq & 4\int_{\gamma=0} \eps\wedge |u(e)|^2\nu(de)+4(\underline \pi^2+\overline \pi^2)\int_{\gamma=0}\eps\wedge |\eta(e)|^2\nu(de)+\int_\R|u(e)-u^m(e)|^2\nu(de)\qquad\qquad 
\enq
Therefore, we get from \reff{convBm}, \reff{estimAm} and the convergence of $(u^m)_m$ to $u$ in $L^2(\nu)$
\beqs
\limsup_{m\rightarrow+\infty}\sup_{p\in [-\underline \pi,\overline \pi]}\int_{\gamma=0} \Delta^m(e,p) \nu(de)  & \leq & 4\int_{\gamma=0} \eps\wedge |u(e)|^2\nu(de)+4(\underline \pi^2+\overline \pi^2)\int_{\gamma=0}\eps\wedge |\eta(e)|^2\nu(de)\;.
\enqs
for any $\eps>0$. By the dominated convergence theorem, we have
\beqs
4\int_{\gamma=0} \eps\wedge |u(e)|^2\nu(de)+4(\underline \pi^2+\overline \pi^2)\int_{\gamma=0}\eps\wedge |\eta(e)|^2\nu(de) & \xrightarrow[\eps\rightarrow0+]{ } & 0\;.
\enqs
Therefore, we get 
\beqs
\limsup_{m\rightarrow+\infty}\sup_{p\in [-\underline \pi,\overline \pi]}\int_{\gamma=0} \Delta^m(e,p) \nu(de)  & \leq & 0\;.
\enqs
By the same argument we get
\beqs
\limsup_{m\rightarrow+\infty}\int_{\gamma(\R)\setminus \{0\}}\sup_{p\in [-\underline \pi,\overline \pi]}\int_{\gamma=g} \Delta^m(e,p) K(g,de)\mu(dg)  & \leq & 0\;,
\enqs
which gives
\beqs
\limsup_{m\rightarrow+\infty}   f(z,u)-f(z,u^m)  & \leq & 0\;.
\enqs
We finally notice that the same arguments can be applied to $f(z,u^m)-f(z,u)$  which gives 
\beqs
\lim_{m\rightarrow+\infty}   f(z,u)-f(z,u^m)  & = & 0\;.
\enqs
\end{proof}

We are now able to use the following stability result similar to Lemma 5 in \cite{MAM08}.
\begin{Proposition}\label{stabilityBSDE}
    Let $(\mathfrak{f}^m)_{m\geq1}$ be a sequence of functions from $\R\times L^2(\nu)$ to $\R$ satisfying the following conditions.
    \begin{enumerate}[(i)]
        \item $(\mathfrak{f}^m)_{m\geq1}$ is nondecresing and converges to some measurable function $\mathfrak{f}$ in the following sense:
        \beqs
\mathfrak{f}^m(z^m,u^m)\xrightarrow[m\rightarrow+\infty]{}\mathfrak{f}(z,u)
        \enqs
        for any sequence $(z^m,u^m)_{m\geq1}$ of $\R\times L^2(\nu)\cap L^\infty(\nu)$ converging in  $\R\times L^2(\nu)$ to $(z,u)$ in $\R\times L^2(\nu)\cap L^\infty(\nu)$ with $(u^m)_m$  bounded in $L^\infty(\nu)$.
        \item There exists constants $C_1,C_2>0$ such that
       % \beq\label{lbdg}
%- z C_1 - C_2%-C_3 |u|_{L^2(\nu)}
%& \leq & \mathfrak{f}(z, u) ~~\leq~~ \frac{\lambda}{2} z^2 + |u|_\lambda,\quad (z,u)\in\R\times L^2(\nu)\;,
%\enq
%and 
\beq\label{lbdgm}
- z C_1 - C_2%-C_3 |u|_{L^2(\nu)}
& \leq & \mathfrak{f}_m(z, u) ~~\leq~~ \frac{\lambda}{2} z^2 + |u|_\lambda %,%\quad (z,u)\in\R\times L^2(\nu)\;,
\enq
for all $m\geq1$ and all $(z,u)\in \R\times L^2(\nu)\cap L^\infty(\nu)$\;.
\item For $m\geq 1$, there exists $(\tilde Y^m,\tilde  Z^m,\tilde  U^m)\in S^\infty \times L^2(W) \times L^2(\tilde{N})$ solution to the BSDE
\beqs%\label{BSDEexpm}
\tilde Y_t  & =&  F + \int_{t}^{T} \mathfrak{f}^m( \tilde Z_s,\tilde  U_s) \, ds - \int_{t}^{T}\tilde  Z_s \, dW_s - \int_{t}^{T} \int_{\R}\tilde  U_s(e) \, \tilde{N}(ds, de), \quad t \in [0, T].    
\enqs
\item The sequence $(\tilde Y^m)_{m\geq1}$ is nondecreasing and there exists a constant $C$ such that
\beqs
    |\tilde Y_t^{m}|  & \leq & C\quad ~\P-a.s. %~t\in[0,T]\;,\\
\enqs
for all $t\in[0,T]$, and 
\beqs
\E\Big[\int_0^T|\tilde  Z_s^{m}|^2ds+\int_0^T\int_\R|\tilde  U^{m}_s(e)|^2\nu(de)ds\Big]   & \leq & C
\enqs
for all $m\geq1$.
    \end{enumerate}
Then, $(\tilde Y^m, \tilde Z^m, \tilde U^m)_m$ converges  in $S^2 \times L^2(W) \times L^2(\tilde{N})$ to $(\tilde Y, \tilde Z, \tilde U)\in S^\infty \times L^2(W) \times L^2(\tilde{N})$  solution to the BSDE
\beqs
   \tilde  Y_t  & = &  F + \int_{t}^{T} \mathfrak{f}( \tilde Z_s, \tilde U_s) \, ds - \int_{t}^{T}\tilde  Z_s \, dW_s - \int_{t}^{T} \int_{\R} \tilde 
   U_s(e) \, \tilde{N}(ds, de), \quad t \in [0, T].
\enqs
\end{Proposition}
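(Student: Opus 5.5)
The argument adapts Kobylanski's monotone stability argument, as extended to jumps in Lemma 5 of \cite{MAM08}, to the present driver. It proceeds in four steps.

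\emph{Step 1: limits of $\tilde Y^m$ and weak limits of $(\tilde Z^m,\tilde U^m)$.} Since $(\tilde Y^m)_m$ is nondecreasing and uniformly bounded by $C$, it converges pointwise to a bounded process $\tilde Y$. Dominated convergence then gives $\E[\int_0^T|\tilde Y^m_s-\tilde Y_s|^2ds]\to0$. The uniform $L^2$ bound in (iv) yields a subsequence along which $(\tilde Z^m,\tilde U^m)$ converges weakly in $L^2(W)\times L^2(\tilde N)$ to some $(\tilde Z,\tilde U)$.

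Along the same lines, the jumps of $\tilde Y^m$ are $\tilde U^m_t(e)$ on $N(dt,de)$. Since $|\tilde Y^m|\le C$, we have $|\tilde U^m|\le 2C$ $\P\otimes dt\otimes\nu$-a.e. This bound passes to $\tilde U$, so $\tilde U\in L^\infty(\tilde N)$. It is exactly the $L^\infty$ bound needed to use assumption (i).

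\emph{Step 2: strong convergence of $(\tilde Z^m,\tilde U^m)$.} This is the main obstacle. The plan is to apply It\^o's formula to $\Phi(\tilde Y^m-\tilde Y^p)$ for $m\ge p$, with the exponential test function
\[
\Phi(x)=\frac{e^{Kx}-1-Kx}{K^2},
\]
and $K$ large. The driver difference is handled with (\ref{lbdgm}):
\[
|\mathfrak f^m(\tilde Z^m,\tilde U^m)|\le \tfrac{\lambda}{2}|\tilde Z^m|^2+|\tilde U^m|_\lambda+C_1|\tilde Z^m|+C_2 .
\]
Since $|\tilde U^m|\le 2C$, we also have $|\tilde U^m|_\lambda\le C'\int|\tilde U^m|^2d\nu$. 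So the driver is dominated by quadratic terms in $(\tilde Z^m-\tilde Z^p,\tilde U^m-\tilde U^p)$ plus terms in $(\tilde Z,\tilde U)$.

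Choosing $K$ large, so that $\Phi''-\lambda|\Phi'|\ge$ const, absorbs the quadratic part. For the jump part, the convexity of $\Phi$ gives a lower bound of the form $c\,|\Delta \tilde U|^2$ on the bounded range $[-4C,4C]$.

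We first let $p$ be fixed and $m\to\infty$, using weak lower semicontinuity of the convex quadratic forms and $\tilde Y^m\to\tilde Y$. We then let $p\to\infty$. Exactly as in Kobylanski's argument, this yields
\[
\E\Big[\int_0^T|\tilde Z^p-\tilde Z|^2+\int|\tilde U^p-\tilde U|^2d\nu\,ds\Big]\to0 .
\]
Passing to a subsequence, we may assume a.e. convergence and domination: $\sup_m|\tilde Z^m|\in L^2$ and $\sup_m\|\tilde U^m-\tilde U\|_{L^2(\nu)}\in L^2(ds\,d\P)$.

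\emph{Step 3: uniform convergence of $\tilde Y^m$ and passage to the limit in the driver.} Burkholder--Davis--Gundy applied to the difference of the equations gives $\E[\sup_t|\tilde Y^m_t-\tilde Y_t|^2]\to0$. The only remaining input is
\[
\int_0^T|\mathfrak f^m(\tilde Z^m_s,\tilde U^m_s)-\mathfrak f(\tilde Z_s,\tilde U_s)|\,ds\to0
\]
in probability. Pointwise convergence of the integrand holds by (i), because $\tilde U^m$ is $L^\infty$-bounded and converges in $L^2(\nu)$ for a.e. $(\omega,s)$. Domination comes from (\ref{lbdgm}): the integrand is bounded by
\[
C\Big(1+\sup_m|\tilde Z^m|^2+\sup_m\int|\tilde U^m|^2d\nu\Big),
\]
which is integrable by Step 2. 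Dominated convergence concludes this step.

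\emph{Step 4: limit equation.} The stochastic integrals converge in $L^2$ by It\^o isometry. Passing to the limit in the BSDE satisfied by $(\tilde Y^m,\tilde Z^m,\tilde U^m)$ therefore gives the BSDE with driver $\mathfrak f$. Taking a c\`adl\`ag version, we get $\tilde Y\in S^\infty$, since $|\tilde Y|\le C$. Finally, every subsequence has a further subsequence with the same limit, which is determined by the monotone limit of $\tilde Y^m$. Hence the whole sequence converges in $S^2\times L^2(W)\times L^2(\tilde N)$.
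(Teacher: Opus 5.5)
Your overall route is the one the paper intends. The paper gives no argument here and simply defers to the Kobylanski-type monotone stability of Lemma 5 in \cite{MAM08}. Steps 1, 3 and 4 are standard and fine, including the bound $|\tilde U^m|\le 2C$ read off from the jumps of $\tilde Y^m$ and the domination after extracting a fast subsequence. The gap is in Step 2, exactly at the point you call the main obstacle: the jump part cannot be absorbed the way you describe.

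Write $y=\tilde Y^m_{s-}-\tilde Y^p_{s-}\ge 0$ and $v=\tilde U^m_s(e)-\tilde U^p_s(e)$. It\^o's formula for $\Phi(\tilde Y^m-\tilde Y^p)$ gives two favourable terms: $\frac12\Phi''(y)|\tilde Z^m_s-\tilde Z^p_s|^2$, and the Bregman term $B(y,v)=\Phi(y+v)-\Phi(y)-\Phi'(y)v$ integrated against $\nu(de)ds$. Your bound $|\tilde U^m|_\lambda\le C'\|\tilde U^m\|^2_{L^2(\nu)}$ then produces an unfavourable term $c'C'\,\Phi'(y)\|\tilde U^m_s-\tilde U^p_s\|^2_{L^2(\nu)}$, where $c'\ge1$ comes from splitting $\tilde U^m$ around $\tilde U^p$ and $\tilde U$.

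For $Z$, a large $K$ works because $\Phi''$ and $\Phi'$ are evaluated at the same point and $\Phi''\gg\Phi'$. For the jumps you would need $B(y,v)\ge c'C'\Phi'(y)v^2+c\,v^2$ pointwise. A lower bound $c|v|^2$ with a fixed $c$ cannot give this: for your $\Phi$, convexity on $[-4C,4C]$ only yields $c=\frac12e^{-4KC}$, while $\Phi'(y)$ can be of order $e^{2KC}/K$. Enlarging $K$ makes the jump term worse, not better.

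In fact no choice of test function rescues this. For a downward jump $v\in[-y,0]$ (only $y+v\ge0$ is guaranteed), any convex $\Phi$ with $\Phi'\ge0$ on $\R_+$ satisfies
\[
B(y,v)=\Phi'(y)|v|-\int_{y+v}^{y}\Phi'(t)\,dt\;\le\;\Phi'(y)|v| .
\]
So absorption forces $c'C'|v|<1$. Since $C'\ge\lambda/2$ and nothing in the hypotheses rules out downward jumps of $\tilde Y^m-\tilde Y^p$ of size comparable to $2C$, this requires at least $c'\lambda C<1$. That smallness condition is not assumed.

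The same obstruction reappears at the second level: after $m\to\infty$ you must absorb $\Phi'(\tilde Y-\tilde Y^p)\|\tilde U^p-\tilde U\|^2_{L^2(\nu)}$. So ``exactly as in Kobylanski's argument'' does not go through for the $U$-component. Closing Step 2 needs an additional idea that exploits the exponential structure of $|u|_\lambda$, or a different stability argument altogether. One possibility is that $h_\lambda(v)\le|v|$ for $v\le0$, so downward jumps cost only linearly. This would have to be combined with a sharper convexity splitting of $h_\lambda(\tilde U^p+\delta U)$ than the crude quadratic bound.
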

\begin{proof}
 % We define $\alpha=\nu(\gamma\neq0\cap\{\eta\leq1\})$ and $(\tilde Y^m_t,\tilde Z^m_t,\tilde U^m_t)=(e^{\alpha t}\tilde Y^m_t,e^{\alpha t}\tilde Z^m_t,e^{\alpha t}\tilde U^m_t)$ for $t\in[0,T]$ and $m\geq1$. Then $(\tilde Y^m_t,\tilde Z^m_t,\tilde U^m_t)$ satisfies
 % \begin{align}
 %     \tilde Y^m_t = \tilde F + \int_{t}^{T} \tilde g^m(s, Z_s, U_s) \, ds - \int_{t}^{T} \tilde Z ^m_s \, dW_s - \int_{t}^{T} \int_{\R} \tilde U ^m_s(e) \, \tilde{N}(ds, de), \quad t \in [0, T].    
 % \end{align}
 % with $\tilde F=e^{\alpha T}F$ and $\tilde g^m(t,z,u)=g^m(z,u)+{\alpha}$
The only difference with Lemma 5 in \cite{MAM08} is the lower bound in %\reff{lbdg} and 
\reff{lbdgm} where it is a specific affine function of the variable $z$. However, the arguments used in the proof of  Lemma 5 in \cite{MAM08}  remain valid for the case where the lower bound is any affine function of the variable $z$ as we have. We therefore omit the proof and refer to that of   Lemma 5 in \cite{MAM08}.
\end{proof}

\begin{Theorem}
    The sequence $(Y^{m},Z^{m},U^{m})_{m\geq1}$ converges in $S^2\times L^2(W)\times L^2\tilde(N)$ to $(Y^{},Z^{},U^{})\in S^\infty\times L^2(W)\times L^2(\tilde{N})$ satisfying
    \beq
    Y_t & = & F + \int_{t}^{T} f_{}( Z_s, U_s) \, ds - \int_{t}^{T} Z_s \, dW_s - \int_{t}^{T} \int_{\R} U_s(e) \, \tilde{N}(ds, de), \quad t \in [0, T].   
    \enq
   % where the function $f_n$ is defined by
   % \beqs
   % f_n(z,u) & = & \inf_{n\geq p \geq -\underline{\pi}} f^{1}%(z,u,p)
%\label{defDriverfm}
% + \int_{\gamma(\R) \setminus \{0\}} \inf_{n\geq p \geq -%\underline{\pi}} f^{2}(z,u,g,p)
%\mu(dg) \\
%& & 
% - z \lambda C_{\kappa, \eta, K} - \frac{C_{\kappa, \eta, K}^2}{2 \lambda}
%\enqs
%for $(z,u)\in \R\times L^2(\tilde N)$. 
\end{Theorem}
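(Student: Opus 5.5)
The plan is to apply the stability result, Proposition \ref{stabilityBSDE}, with $\mathfrak{f}^m:=f_m$ and $\mathfrak{f}:=f$, and to check its four hypotheses one by one using the lemmas already established.

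\emph{Hypotheses (ii)--(iv).} Hypothesis (iii) is the existence of $(Y^m,Z^m,U^m)$ for the BSDE \eqref{BSDEexpm}. This holds by Lemma \ref{LemLipfm}(ii), since $f_m$ is Lipschitz and $F$ is bounded. Hypothesis (ii) is exactly Lemma \ref{LemQuadGrowfm}, with
\[
C_1=C_{\kappa,\eta},\qquad C_2=\frac{C_{\kappa,\eta}^2}{2\lambda}+(\underline\pi+\overline\pi)\int_\R|\eta(e)|\nu(de),
\]
where $C_1$ is replaced by $|C_{\kappa,\eta}|$ and the sign is adjusted if needed. Hypothesis (iv) is Proposition \ref{Prop ymbdd}: monotonicity in (i), and the uniform bounds \reff{BorneUnifYm} and \reff{BorneUnifZUm} in (ii). The monotonicity in $m$ of $f_m$ in hypothesis (i) is Lemma \ref{LemLipfm}(i).

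\emph{Convergence of the drivers.} What remains, and this is the main step, is the convergence part of hypothesis (i). Take $z^m\to z$ in $\R$, $u^m\to u$ in $L^2(\nu)$, $\sup_m\|u^m\|_{L^\infty(\nu)}\le M$ and $u\in L^\infty(\nu)$. We must show $f_m(z^m,u^m)\to f(z,u)$. I split
\[
f_m(z^m,u^m)-f(z,u)=\big(f_m(z^m,u^m)-f(z^m,u^m)\big)+\big(f(z^m,u^m)-f(z,u^m)\big)+\big(f(z,u^m)-f(z,u)\big).
\]
The last term tends to $0$ by Lemma \ref{lem-ref-f}(ii). The middle term is bounded by $C(1+|z|+|z^m|)|z^m-z|\to0$ by Lemma \ref{lem-ref-f}(i).

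\emph{The truncation error (main obstacle).} The first term is where the work lies, and I need to bound it uniformly over the bounded family. Since $z^m$ is bounded, for $m\ge\sup_k|z^k|$ we have $\rho_m(z^m)=1$. For $m>M+\max(\underline\pi,\overline\pi)\sup|\eta|$ we have $|u^m(e)-p\eta(e)|<m$, so $\varphi_m$ acts as the identity on these arguments and $\rho_m(u^m(e))=1$ (here $\eta$ is bounded by \reff{condetanu1}). Then, using the identity \reff{lbdfg}, the integrands of $f^1_m$ and $f^2_m$ coincide with those of $f^1$ and $f^2$, except on $\{|e|\le 1/m\}$. Also $\nu_m$ differs from $\nu$ only there. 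The mismatch is then controlled by
\[
\sup_{p\in[-\underline\pi,\overline\pi]}\int_{|e|\le 1/m}h_\lambda\big(u^m(e)-p\eta(e)\big)\,\nu(de)\;\le\; C_M\int_{|e|\le1/m}\big(|u^m(e)|^2+|\eta(e)|^2\big)\nu(de).
\]
This holds because $h_\lambda(x)\le C|x|^2$ on bounded sets. The same bound applies to the $K(g,de)\mu(dg)$ part after disintegration, since $\int_{\gamma\neq0}\int_{\gamma=g}\cdots K(g,de)\mu(dg)\le\int_\R\cdots\nu(de)$. The bound tends to $0$: the $\eta$ part goes to $0$ by dominated convergence, using $\int(1\wedge|e|^2)\nu(de)<\infty$. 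For the $u^m$ part,
\[
\int_{|e|\le 1/m}|u^m|^2\,d\nu\le 2\|u^m-u\|^2_{L^2(\nu)}+2\int_{|e|\le1/m}|u|^2\,d\nu\to0.
\]
Since the infima over $p$ differ by at most the supremum of the differences, the first term tends to $0$.

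With all four hypotheses verified, Proposition \ref{stabilityBSDE} yields convergence in $S^2\times L^2(W)\times L^2(\tilde N)$ to a solution $(Y,Z,U)\in S^\infty\times L^2(W)\times L^2(\tilde N)$ of the BSDE with driver $f$.
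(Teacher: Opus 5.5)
Your proposal is correct and follows the paper's route: apply Proposition \ref{stabilityBSDE} with $\mathfrak{f}^m=f_m$, take hypotheses (ii)--(iv) from Lemmas \ref{LemQuadGrowfm}, \ref{LemLipfm} and Proposition \ref{Prop ymbdd}, and prove the convergence in (i) by separating the truncation error from the continuity of $f$ given by Lemma \ref{lem-ref-f}. Your handling of the truncation error is more careful than the paper's, which claims $f_m(z^m,u^m)=f(z^m,u^m)$ for $m$ large and so overlooks the small-jump cutoff ($\nu_m$ and $\mathds{1}_{|e|>1/m}$); your bound on the residual over $\{|e|\le 1/m\}$, together with the explicit three-term split using both parts of Lemma \ref{lem-ref-f}, repairs that step.
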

\begin{proof} To complete the proof, we apply Proposition \ref{stabilityBSDE}. From  Lemmata \ref{LemQuadGrowfm} and  \ref{LemLipfm} and Proposition \ref{Prop ymbdd},  it only remains to show that the sequence $(f_{m})_{m\geq1}$ satisfies condition (i) of Proposition \ref{stabilityBSDE}. Fix a sequence $(z^m,u^m)_{m\geq1}$ of $\R\times L^2(\nu)$ converging to some $(z,u)\in\R\times L^2(\nu)$ with $(u^m)_{m\geq1}$ bounded in $L^\infty(\nu)$. Then we have
\beqs
    |f_{m}(z^m,u^m) -f(z,u)| & \leq &  |f_{m}(z^m,u^m) -f(z^m,u^m)| + |f(z^m,u^m) -f(z,u)| \;.
\enqs
Since the sequence $(z^m,u^m)_m$ is bounded in $\R\times L^\infty(\nu)$ we get from the definition of $f^m$ that
\beqs%\label{estim1stab}
    |f_{m}(z^m,u^m) -f(z^m,u^m)| & = & 0 
\enqs
for $m$ large enough. Then, %We turn to the term $|f(z^m,u^m) -f(z,u)|$. 
from Lemma \ref{lem-ref-f} we get  
\beqs
    |f(z^m,u^m)-f(z,u)| &  \xrightarrow[m\rightarrow+\infty]{} & 0\;,
\enqs
which gives the result.
\end{proof}

\section{Numerical illustration}\label{sect6}
%Let us recall that, in our framework, we consider $E = \mathbb{R}$ equipped with the $\sigma$-field $\mathcal{B}(\mathbb{R})$.\\
For the numerical tests, we consider consider a derivative of the form $F=g(S_T)$ where $g$ is a bounded Borel function and  a Poisson random measure with compensator
\beqs
\nu(de) & = & \rho|e|^{-\alpha}\,de,
\enqs
for some parameter $\alpha$. We choose $\alpha$ in such a way that the integrability condition on the L\'evy measure $\nu$ is satisfied and direct computations yield that $\alpha \in (1, 3)$. 
We fix $\eps\in(0,1)$ and we choose the function $\eta$ as follows \begin{equation*}
\eta(e) = \left\{
\begin{array}{rcl}
1-\eps & \mbox{ if } & e > 1-\eps \\
e & \mbox{ if } & |e| \leq 1-\eps \\
-1+\eps & \mbox{ if } & e <-1+\eps.
\end{array}
\right.
\end{equation*} Such an $\eta$ allows in particular to keep $S_t$ positive for all $t\in[0,T]$ if we start from $S_0=s_0>0$.

The condition   \eqref{condetanu1} is clearly satisfied.  As we assume ~\eqref{condetanu2}, we have to restrict $\alpha \in (1, 2).$

\subsection{Cases where we do not observe  small jumps.}
We define the signal function $\gamma$ by
\beqs
\gamma(e) & = & \eta(e) \, \mathds{1}_{\{ e \notin (-c, c) \}},\quad e\in\R\;.
\enqs
%where $\mathds{1}$ denotes the indicator function.\\
For the signal $\gamma$, we introduce a parameter $c \in (0, +\infty)$ such that we have no information on the size of jump of the underlying on the set $(-c, c)$. %In this way, when $c \rightarrow +\infty$, the signal reveals all the information about the jump i.e the investor know exactly what the stock price jump is about to happen and when $c \rightarrow +\infty$, all the information is hidden. 
%For the signal $\gamma$, we introduce a parameter $c \in (0, +\infty)$ such that we have no information on the size of jump of the underlying on the set $(-c, c)$. 
In this way, when $c \rightarrow +\infty$, the signal reveals all the information about the jump i.e the investor knows exactly what the stock price jump is about to happen and when $c \rightarrow 0 $, all the information is hidden.
%(see Figure~\ref{fig:eta_plot}\textcolor{red}{corriger la figure pour qu'elle ne touche pas 1 et -1}). 

%\begin{figure}[h!]
%    \centering
%    \includegraphics[width=0.6\textwidth]{téléchargement.png}
%    \caption{Shaded small jump region}
%    \label{fig:eta_plot}
%\end{figure}

\paragraph{Finding the measure $\mu$ on $\gamma \neq 0$.} For a Borel nonnegative function $f$ such that $f(0)=0$, we have
\beqs
\int_\R f(g)\mu(dg) & = & \int_\R f(\gamma(e))\nu(de)~~ =~~\int_{\mathbb{R}\setminus{(-c,c)} }f(\gamma(e))\nu(de) \;.
\enqs
If $c\geq 1-\eps$ we get
\beqs
\int_\R f(g)\mu(dg) & = &\rho\frac{c^{1-\alpha}}{\alpha - 1}(f(-1+\eps) + f(1-\eps)) \;. 
\enqs
If $c< 1-\eps$ we get
%$$ 
%\int_{\mathbb{R}\setminus{(-c,c)} }f(\gamma(e))\nu(de) = \int_{-\infty}^{-c}f(\eta(e))\nu(de) + \int_{c}^{\infty}f(\eta(e))\nu(de).$$ 
%Direct computations give us 
\beqs
\int_{\mathbb{R}} f(g)\mu(dg) & = & \rho\frac{(1-\eps)^{1-\alpha}}{\alpha - 1}(f(-1+\eps) + f(1-\eps)) + \int_{-1+\eps}^{-c}f(e) \nu(de) + \int_{c}^{1-\eps}f(e) \nu(de).
\enqs
We finally have  
\beqs
\mu(B) = \rho\frac{\big(c\vee(1-\eps)\big)^{1-\alpha}}{\alpha - 1}(\mathds{1}_B(-1+\eps) + \mathds{1}_B(1-\eps)) + \mathds{1}_{c<1-\eps}\Big(\int_{-1+\eps}^{-c}\mathds{1}_B(e) \nu(de) + \int_{c}^{1+\eps}\mathds{1}_B(e) \nu(de)\Big)
\enqs
for every  Borel set $B \subset \mathbb{R}\setminus{\{0\}}$. 
%\color{red} Have to mutiply the first term by $(1-\epsilon)^{1-\alpha}.

%\color{black}
\paragraph{Disintegration of the measure $\nu$ on $\gamma \neq 0$.}
From the definition of the function $\gamma$, we have $\{\gamma \neq 0\} = (-\infty, -c] \cup [c, \infty)$ and 
$\gamma(\R) \setminus \{0\} = [-1+\eps, -(c\wedge (1-\eps))] \cup [c\wedge (1-\eps), 1-\eps]$. We observe that in the case $c\geq 1-\eps$, we have $\gamma(\R)\setminus\{0\}=\{-(1-\eps),1-\eps\}$.\\

From the disintegration formula, the kernel $K$ satisfies
\beqs
\nu(de \cap \{\gamma \neq 0\}) 
&= & \nu(de)\, \mathds{1}_{(-\infty, -c] \cup [c, \infty)}(e) \\
&= & \int_{[-1+\eps, -(c\wedge (1-\eps))] \cup [c\wedge (1-\eps), 1-\eps]} K(g, de)\, \mu(dg) \;.\enqs
If $c\geq 1-\eps$ we get
\beqs
\nu(de \cap \{\gamma \neq 0\}) 
&= & \rho\frac{c^{1-\alpha}}{\alpha - 1} \Big(K(-1+\eps, de)+ K(1-\eps, de)\Big)\;.
\enqs
If $c< 1-\eps$ we get
\beqs
\nu(de \cap \{\gamma \neq 0\})  &= & \rho\frac{(1-\eps)^{1-\alpha}}{\alpha - 1}K(-1+\eps, de) 
   + \rho\int_{-1+\eps}^{-c} K(g, de)\, |g|^{-\alpha} \, dg \\
& &+ \rho\frac{(1-\eps)^{1-\alpha}}{\alpha - 1}K(1-\eps, de) 
   + \rho\int_{c}^{1-\eps} K(g, de)\, |g|^{-\alpha} \, dg.
\enqs
Using the condition $K(g,\R)=K(g,\gamma=g)$ we deduce that
\beqs
K(1-\eps,de) & = & \frac{\alpha - 1}{(c\vee (1-\eps))^{1-\alpha}}|e|^{-\alpha}\mathds{1}_{[c\vee (1-\eps),+\infty)}(e)de \\
K(-1+\eps,de) & = & \frac{\alpha - 1}{(c\vee (1-\eps))^{1-\alpha}}|e|^{-\alpha}\mathds{1}_{(-\infty,-(c\vee (1-\eps))]}(e)de
\enqs
and $K(g,de)=\delta_{\{g\}}(de)$ for $g\in (-(1-\eps),-c]\cup[c,1-\eps)$ in the case $c<1-\eps$. 
We are now able to compute the driver $f$ and the related optimal strategy.

\vspace{2mm}

Without any signal i.e. $\gamma=0$, the optimal strategy is given by
\beqs
p^*(0, z, u)
& = & \arg\min_{\hspace{-5.5mm}p \in [-\underline{\pi}, \overline{\pi}]}
\left\{
\frac{\lambda}{2}\left(p \sigma - \left(z + \frac{C_{\kappa,\eta}}{\lambda}\right)\right)^2
+ \int_{-c}^{c}
\left(
\frac{e^{\lambda (u(e) - p \eta(e))} - 1}{\lambda}
- u(e)
\right)\nu(de)
\right\}
\enqs
for $z \in \mathbb{R}$ and $u\in L^2(\nu)$. 
 
When receiving a signal $g \in \gamma(\mathbb{R})\setminus \{0\}$ we distinguish the three cases.
\begin{itemize}
%\color{red}
\item Case 1: $g = -1+\eps$. We have $\{\gamma=-1+\eps\}=(-\infty,-((1-\eps)\vee c)]$ and we get from the expression of $K$
\beqs
f^2(-1+\eps, u, p) & = & \frac{\alpha-1}{((1-\eps)\vee c)^{1-\alpha}}\int_{-\infty}^{-((1-\eps)\vee c)}[\frac{e^{\lambda (u(e) - p (-1+\eps))} - 1}{\lambda}
- u(e)]|e|^{-\alpha}de.
\enqs
In this case the optimal strategy is given by $p^*(-1+\eps)=-\underline\pi$ and
\beqs
\inf_{p\in[-\underline \pi,\overline \pi]}f^2(-1+\eps,u,p)& = & \frac{\alpha-1}{((1-\eps)\vee c)^{1-\alpha}}\int_{-\infty}^{-((1-\eps)\vee c)}[\frac{e^{\lambda (u(e) +\underline \pi (-1+\eps))} - 1}{\lambda}
- u(e)]|e|^{-\alpha}de.
\enqs
%\color{black}
%$$f^2(g, u, p) = \frac{1}{2c^{1-\alpha}}[\frac{e^{\lambda (u(-1) - p \eta(-1))} - 1}{\lambda}
%- u(-1)].$$
%\color{red}
\item Case 2: $g = 1-\eps$. We have $\{\gamma=1-\eps\}=[(1-\eps)\vee c,+\infty)$ and we get from the expression of $K$ 
\beqs
f^2(1-\eps, u, p) & = & \frac{\alpha-1}{((1-\eps)\vee c)^{1-\alpha}}\int_{(1-\eps)\vee c}^{+\infty} [\frac{e^{\lambda (u(e) - p (1-\eps))} - 1}{\lambda}
- u(e)]e^{-\alpha}de.
\enqs
In this case the optimal strategy is given by $p^*(1-\eps)=\overline \pi$ and
\beqs
\inf_{p\in[-\underline \pi,\overline \pi]}f^2(1-\eps,u,p)& = & \frac{\alpha-1}{((1-\eps)\vee c)^{1-\alpha}}\int_{(1-\eps)\vee c}^{+\infty}[\frac{e^{\lambda (u(e) -\overline \pi (1-\eps))} - 1}{\lambda}
- u(e)]e^{-\alpha}de.
\enqs

\item Case 3:  $c<1-\eps$ and $g\in (-1+\eps,-c]\cup[c,1-\eps)$.  We have $\{\gamma=g\}=\{g\}$ and we get from the expression of $K$ 
\beqs
f^2(g, u, p) & = & \frac{e^{\lambda (u(g) -p g)} - 1}{\lambda}
- u(g)\;,
\enqs
and
\beqs
\inf_{p\in[-\underline \pi,\overline \pi]}f^2(g, u, p) & = & \frac{e^{\lambda (u(g) +\underline \pi g)} - 1}{\lambda}\mathds{1}_{g<0}+\frac{e^{\lambda (u(g) -\overline \pi g)} - 1}{\lambda}\mathds{1}_{g>0}
- u(g)\;,
\enqs
\end{itemize}

%\color{red}
We finally get
\beq\label{Driverwithoutsmalljumps}%\nonumber
f(z,u) & = & \min_{p \in [-\underline{\pi}, \overline{\pi}]}
\left\{
\frac{\lambda}{2}\left(p\sigma  - \left(z + \frac{C_{\kappa,\eta}}{\lambda}\right)\right)^2
+ \int_{-c}^{c}
\left(
\frac{e^{\lambda (u(e) - p \eta(e))} - 1}{\lambda}
- u(e)
\right)\nu(de)
\right\}
\\
 &  &
 +
 \int_{ c}^{+\infty}[\frac{e^{\lambda (u(e) -\overline \pi ((1-\eps)\wedge e)} - 1}{\lambda}
- u(e)]\nu(de) \nonumber\\
 & & +%\frac{(1-\eps)^{1-\alpha}}{2c^{1-\alpha}}
 \int_{-\infty}^{- c }[\frac{e^{\lambda (u(e) +\underline \pi ((-1+\eps)\vee e)} - 1}{\lambda}
- u(e)]\nu(de)%\nonumber %\\
 %&  &
 - C_{\kappa,\eta} z-\frac{C_{\kappa,\eta}^2}{2\lambda} \nonumber
\enq
for $z \in \mathbb{R}$ and $u\in L^2(\nu)$. 

\color{black}
%To sum up,
%\[
%\begin{aligned}
%\int_{\gamma(\mathbb{R}) \setminus \{0\}} 
%\inf_{p \in [-\underline{\pi}, \overline{\pi}]} f^2(g, u, p)\,
%\mu(dg)
%&= \frac{1}{\alpha - 1}
%\left[ f^2(1, u(1), \overline{\pi}) + f^2(-1, u(-1), -\underline{\pi}) \right]  + \int_{-1}^{-c} 
%\left( 
%\frac{e^{\lambda (u(e) - p^*(u(e))e} - 1}{\lambda}
%- u(e)
%\right) \nu(de) \\
%&\quad + \int_{c}^{1} 
%\left(
%\frac{e^{\lambda (u(e) + p^*(u(e)) e} - 1}{\lambda}
%- u(e)
%\right) \nu(de).
%\end{aligned}

\subsection{Cases where we do not observe large jumps.}
In this part, we consider the case where large  jumps are not observed. The signal is then given by 
%In the first case, i.e.\ when we do not observe jumps above a certain size, we have
\beqs
\gamma(e) & = & \eta(e)\,\mathds{1}_{|e| \notin (c,+\infty)}\;,\quad e\in \R\;,
\enqs
where $c$ is a nonegative  constant.

\paragraph{Finding measure $\mu$ on $\{\gamma\neq0\}$.} For a Borel nonnegative function $f$ such that $f(0)=0$, we have
\beqs
\int_\R f(g)\mu(dg) & = & \int_\R f(\gamma(e))\nu(de)~~=~~\int_{[-c,0)\cup(0,c]}f(\gamma(e))\nu(de)\;.
\enqs
If $c\geq 1-\eps$, we get
\beqs
\int_\R f(g)\mu(dg) &  = &  \int_{-c}^{-1 + \eps}f(-1 + \eps)\nu(de) + \int_{-1 + \eps}^{1-\eps}f(e)\nu(de) + \int_{1 - \eps}^{c}f(1 - \eps)\nu(de)\\
& = & \rho\frac{(1-\eps)^{1-\alpha}-c^{1-\alpha}}{\alpha-1}\big(f(-1 + \eps)+f(1 - \eps)\big) + \int_{-1 + \eps}^{1-\eps}f(e)\nu(de).
\enqs 
If $c<1-\eps$, we get 
\beqs
\int_\R f(g)\mu(dg) &  = &  \int_{-c}^{c}f(e)\nu(de) ~~=~~ \int_{-c}^{c}f(e)\nu(de).
\enqs 
In particular, for every Borel set $B \subset [-c,c]\setminus \{0\}$, we have
\beqs
\mu(B)
 & = & 
\rho\frac{(1-\eps)^{1-\alpha}-(c\vee(1-\eps))^{1-\alpha}}{\alpha-1}\big(
\mathds{1}_{B}(-1+\eps)
+\mathds{1}_{B}(1-\eps)\big)
+
\int_{-((1-\eps)\wedge c)}^{(1-\eps)\wedge c} \mathds{1}_{B}(e)\,\nu(de).
\enqs
\paragraph{Disintegration of the measure $\nu$ on $\{\gamma \neq 0\}$.}
From the definition of $\gamma$, we have
$\{\gamma \neq 0\}  =  [-c,0)\cup(0, c]$ and   
$\gamma(\R)\setminus\{0\} = [-(c\wedge(1-\eps)),0)\cup(0,c\wedge(1-\eps)]$.
From the disintegration formula, the kernel $K$ satisfies
\beqs
\nu(de \cap \{\gamma \neq 0\}) 
&= & \nu(de)\, \mathds{1}_{[-c,0)\cup(0, c]}(e) \\
 & = & \int_{[-(c\wedge(1-\eps)),0)\cup(0,c\wedge(1-\eps)]} K(g,de)\mu(dg)\;. \enqs
If $c\geq 1-\eps$ we get
 \beqs
\nu(de \cap \{\gamma \neq 0\}) &= & \rho\frac{(1-\eps)^{1-\alpha}-c^{1-\alpha}}{\alpha-1}\big(
K(-1+\eps, de)
+K(-1+\eps, de)\big) 
   + \rho\int_{-1+\eps}^{1-\eps} K(g, de)\, |g|^{-\alpha} dg.
\enqs
If $c< 1-\eps$ we get
 \beqs
\nu(de \cap \{\gamma \neq 0\}) &= &  \rho\int_{-c}^{c} K(g, de)\, |g|^{-\alpha} dg.
\enqs
Using the condition $K(g,\R)=K(g,\gamma=g)$ we deduce that $K(g,de)=\delta_{\{g\}}(de)$ for $g\in (-((1-\eps)\wedge c);c\wedge(1-\eps))$ and
\beqs
K(1-\eps,de) & = & \frac{\alpha - 1}{(1-\eps)^{1-\alpha}-(c\vee (1-\eps))^{1-\alpha}}|e|^\alpha\mathds{1}_{[1-\eps,c\vee(1-\eps))}(e)de \\
K(-1+\eps,de) & = & \frac{\alpha - 1}{(1-\eps)^{1-\alpha}-(c\vee (1-\eps))^{1-\alpha}}|e|^\alpha\mathds{1}_{(-(c\vee (1-\eps)),-(1-\eps)]}(e)de
\enqs
with the convention $\infty.0=0$. 
We are now able to compute the driver $f$ and the related optimal strategy.

The optimal strategy without signal (i.e., $\Delta_t G = 0$)  is given by
\beqs
p^*(0, z, u)
& = & \arg\min_{\hspace{-5.5mm}p \in [-\underline{\pi}, \overline{\pi}]}
\left\{
\frac{\lambda}{2}\left(p\sigma  - \left(z + \frac{C_{\kappa,\eta}}{\lambda}\right)\right)^2
+ \int_{(-\infty,-c)\cup(c,+\infty)}
\left(
\frac{e^{\lambda (u(e) - p \eta(e))} - 1}{\lambda}
- u(e)
\right)\nu(de)
\right\}
\enqs
for $z \in \mathbb{R}$ and $u\in L^2(\nu)$. 

When receiving a signal $g \in \gamma(\mathbb{R})\setminus \{0\}$ we distinguish four cases.
\begin{itemize}
%\color{red}
\item Case 1: $g = -1+\eps$ and $c\geq 1-\eps$. We have $\{\gamma=-1+\eps\}=[-c,-1+\eps]$ and we get from the expression of $K$
\beqs
f^2(-1+\eps, u, p) & = & \frac{\alpha - 1}{(1-\eps)^{1-\alpha}-(c\vee (1-\eps))^{1-\alpha}} \int_{-c}^{-1+\eps}[\frac{e^{\lambda (u(e) - p (-1+\eps))} - 1}{\lambda}
- u(e)]|e|^{-\alpha}de.
\enqs
In this case the optimal strategy is given by $p^*(-1+\eps)=-\underline\pi$ and
\beqs
\inf_{p\in[-\underline \pi,\overline \pi]}f^2(-1+\eps,u,p)& = & \frac{\alpha - 1}{(1-\eps)^{1-\alpha}-(c\vee (1-\eps))^{1-\alpha}} \int_{-c}^{-1+\eps}[\frac{e^{\lambda (u(e) +\underline \pi (-1+\eps))} - 1}{\lambda}
- u(e)]|e|^{-\alpha}de.
\enqs

\item Case 2: $g = 1-\eps$ and $c\geq 1-\eps$. We have $\{\gamma=1-\eps\}=[1-\eps,c]$ and we get from the expression of $K$
\beqs
f^2(1-\eps, u, p) & = & \frac{\alpha - 1}{(1-\eps)^{1-\alpha}-(c\vee (1-\eps))^{1-\alpha}}\int_{1-\eps}^c[\frac{e^{\lambda (u(e) - p (1-\eps))} - 1}{\lambda}
- u(e)]|e|^{-\alpha}de.
\enqs
In this case the optimal strategy is given by $p^*(1-\eps)=\overline\pi$ and
\beqs
\inf_{p\in[-\underline \pi,\overline \pi]}f^2(1-\eps,u,p)& = & \frac{\alpha - 1}{(1-\eps)^{1-\alpha}-(c\vee (1-\eps))^{1-\alpha}} \int_{1-\eps}^c[\frac{e^{\lambda (u(e) +\overline \pi (-1+\eps))} - 1}{\lambda}
- u(e)]|e|^{-\alpha}de.
\enqs

\item Case 3: $g \in  (-1+\eps,1-\eps)$ and $c\geq 1-\eps$.  We have $\{\gamma=g\}=\{g\}$ and we get from the expression of $K$ 
\beqs
f^2(g, u, p) & = & \frac{e^{\lambda (u(g) -p g)} - 1}{\lambda}
- u(g)\;,
\enqs
and
\beqs
\inf_{p\in[-\underline \pi,\overline \pi]}f^2(g, u, p) & = & \frac{e^{\lambda (u(g) +\underline \pi g)} - 1}{\lambda}\mathds{1}_{g<0}+\frac{e^{\lambda (u(g) -\overline \pi g)} - 1}{\lambda}\mathds{1}_{g>0}
- u(g)\;,
\enqs
\item Case 4: $g \in  (-c,c)$ and $c< 1-\eps$.  We have $\{\gamma=g\}=\{g\}$ and we get from the expression of $K$ 
\beqs
f^2(g, u, p) & = & \frac{e^{\lambda (u(g) -p g)} - 1}{\lambda}
- u(g)\;,
\enqs
and
\beqs
\inf_{p\in[-\underline \pi,\overline \pi]}f^2(g, u, p) & = & \frac{e^{\lambda (u(g) +\underline \pi g)} - 1}{\lambda}\mathds{1}_{g<0}+\frac{e^{\lambda (u(g) -\overline \pi g)} - 1}{\lambda}\mathds{1}_{g>0}
- u(g)\;,
\enqs

\end{itemize}

We finally get the following expression for the driver 
\beq\nonumber
f(z,u) & = & \min_{p \in [-\underline{\pi}, \overline{\pi}]}
\left\{
\frac{\lambda}{2}\left(p \sigma - \left(z + \frac{C_{\kappa,\eta}}{\lambda}\right)\right)^2
+ \int_{(-\infty,-c)\cup(c,+\infty)}
\left(
\frac{e^{\lambda (u(e) - p \eta(e))} - 1}{\lambda}
- u(e)
\right)\nu(de)
\right\}\\\nonumber
 &  &  +\int_{-c}^{0}[\frac{e^{\lambda (u(e) +\underline \pi (e\vee(-1+\eps))} - 1}{\lambda}
- u(e)]\nu(de) %\\
% &  &
+\int_{0}^c[\frac{e^{\lambda (u(e) +\overline \pi (e\wedge (1-\eps))} - 1}{\lambda}
- u(e)]\nu(de)\\
 &  & - C_{\kappa,\eta} z-\frac{C_{\kappa,\eta}^2}{2\lambda} \label{Driverwithoutbigjumps}
\enq
for $z \in \mathbb{R}$ and $u\in L^2(\nu)$. 
 \paragraph{Approximation of the driver $f$. } We use the sequence $(e_i)_{i = -(q+1)}^{q+1}$ of the previous section and 
we introduce the integers $\tilde \ell$ and $\tilde m$ such that
\beqs
e_{\tilde \ell}~<~1-\eps~\leq~e_{\tilde \ell+1} & \mbox{ and } & e_{\tilde \ell+\tilde m}~<c\leq e_{\tilde \ell+\tilde m+1}\;. 
\enqs
We then approximate the driver $f$ by the function $\check f$ where the integrals are discretized according to the measure $\check \nu$. More precisely, for instance for \eqref{Driverwithoutbigjumps}, $\tilde  f$ is given by
\beqs
\tilde f(z,u) & = & \min_{p \in [-\underline{\pi}, \overline{\pi}]}
\left\{
\frac{\lambda}{2}\left(p\sigma  - \left(z + \frac{C_{\kappa,\eta}}{\lambda}\right)\right)^2
+ \sum_{-\tilde q\leq i\leq \tilde q\;,~ |i|>\tilde \ell+\tilde m}
\left(
\frac{e^{\lambda (u(e_i) - p e_i)} - 1}{\lambda}
- u(e_i)
\right)\nu_i
\right\}\\
 &  & +\frac{1}{2c^{1-\alpha}} \sum_{i=1 }^{\tilde \ell}[\frac{e^{\lambda (u(e_i) -\overline \pi (1-\eps))} - 1}{\lambda}
- u(e_i)]\nu_i\\
 & & +\frac{1}{2c^{1-\alpha}}\sum_{i=-\tilde \ell}^{-1}[\frac{e^{\lambda (u(e_i) +\underline \pi (-1+\eps))} - 1}{\lambda}
- u(e_i)]\nu_i - C_{\kappa,\eta} z-\frac{C_{\kappa,\eta}^2}{2\lambda}.
\enqs
%RESTE A ECRIRE L4EDSR AVEC DISCRETIZATION EN ESPCE DES SAUTS
\color{black}

\subsection{Jump space discretization and numerical schemes for BSDEs}
To be able to set  numerical schemes for the approximation of solution to BSDEs with the previusly computed drivers,  we need to discretize the jump measure in space.% to use numerical methods to compute theses integral which don't have an explicit formula and depend of the control $z, u$.

\paragraph{Space discretization of the Poisson measure.}
We introduce an increasing sequence \((e_i)_{i = -(q+1)}^{q+1}\) 
such that $e_{q+1} = -e_{-(q+1)} = +\infty$ and $e_{-i}=-e_i$ for all $i=0,\ldots,q$. In particular, we have  $e_0 = 0$.
%We can write
%\[
%\mathbb{R} = \bigcup_{i = -q}^{q } [e_i, e_{i + 1}],
%\]
%\textcolor{red}{pb fermeture intervalles extremaux}
%and 
Let us introduce the  weights 
\[
\nu_i = \int_{\frac{e_{i - 1} + e_{i}}{2}}^{\frac{e_{i + 1} + e_i}{2}} \nu(de)\;,\quad -q,\ldots,-1,1,\ldots,q\;.
\]
We next write the random measure $N$ as follows
\beqs
N & = & \sum_{k \geq 0} \delta_{(T_k, \zeta_k)}
\enqs
where $(T_k)_{k\geq 0}$ is a nondecreasing sequence of stopping times and $\zeta_k$ is an $\Fc_{T_k}$-measurable random variable.
We then approximate $N$ by the random measure $\check N$ defined by 
\beqs
\sum_{k \geq 0} \delta_{(T_k, \check{\zeta}_k)},
\enqs
with
\beqs
\check{\zeta}_k = \sum_{i = -q,\;i\neq0}^{q} e_i \,
\mathds{1}_{\left( \frac{e_{i - 1} + e_{i }}{2}, \, \frac{e_{i + 1} + e_i}{2} \right]}(\zeta_k).
\enqs
From classical results on Poisson measures, $\check N$ is a Poisson measure  on  $\R_+\times \{e_{-q},\ldots,e_{-1},e_1,\ldots,e_q$\} with compensator $\check\nu(de)dt$ where $\check \nu$ is given by
\beqs
\check \nu(de) & = & \sum_{i = -q,\;i\neq0}^{q}\nu_i\delta_{e_i}(de)\;.
\enqs
 \paragraph{Approximation of the drivers. } 
We introduce the integer $\ell$ such that
\beqs
e_\ell & < & c~\leq~e_{\ell+1} %& \mbox{ and } & e_{\ell+m}~<1-\eps\leq e_{\ell+m+1}
\;. 
\enqs
We then approximate the driver $f$ by the function $\check f$ where the integrals are discretized according to the measure $\check \nu$. 
\begin{enumerate}
    \item \underline{Case of a signal given by large jumps}. Using \reff{Driverwithoutsmalljumps}, $\check f$ is in this case given by
\beqs
\check f(z,u) & = & \min_{p \in [-\underline{\pi}, \overline{\pi}]}
\left\{
\frac{\lambda}{2}\left(p\sigma  - \left(z + \frac{C_{\kappa,\eta}}{\lambda}\right)\right)^2
+ \sum_{i=-\ell, i\neq 0}^{\ell}
\left(
\frac{e^{\lambda (u(e_i) - p e_i)} - 1}{\lambda}
- u(e_i)
\right)\nu_i
\right\}\\
 &  & +%\frac{1}{2c^{1-\alpha}} 
 \sum_{i=\ell+1}^{q}[\frac{e^{\lambda (u(e_i) -\overline \pi ((1-\eps)\wedge e_i))} - 1}{\lambda}
- u(e_i)]\nu_i\\
 & & +%\frac{1}{2c^{1-\alpha}}
 \sum_{i=-q}^{-(\ell+1)}[\frac{e^{\lambda (u(e_i) +\underline \pi ((-1+\eps)\vee e_i)} - 1}{\lambda}
- u(e_i)]\nu_i\\
 &  & - C_{\kappa,\eta} z-\frac{C_{\kappa,\eta}^2}{2\lambda}.
\enqs

    \item \underline{Case of a signal given by small jumps}. Using \reff{Driverwithoutbigjumps}, $\check f$ is in this case given by
\beqs
\check f(z,u) & = & \min_{p \in [-\underline{\pi}, \overline{\pi}]}
\left\{
\frac{\lambda}{2}\left(p \sigma - \left(z + \frac{C_{\kappa,\eta}}{\lambda}\right)\right)^2
+ \sum_{|i|> \ell }
\left(
\frac{e^{\lambda (u(e_i) - p e_i)} - 1}{\lambda}
- u(e_i)
\right)\nu_i
\right\}\\
 &  & +%\frac{1}{2c^{1-\alpha}} 
 \sum_{i=0}^{\ell}[\frac{e^{\lambda (u(e_i) -\overline \pi ((1-\eps)\wedge e_i))} - 1}{\lambda}
- u(e_i)]\nu_i\\
 & & +%\frac{1}{2c^{1-\alpha}}
 \sum_{i=-\ell}^{-1}[\frac{e^{\lambda (u(e_i) +\underline \pi ((-1+\eps)\vee e_i)} - 1}{\lambda}
- u(e_i)]\nu_i\\
 &  & - C_{\kappa,\eta} z-\frac{C_{\kappa,\eta}^2}{2\lambda}.
\enqs

    \item \underline{Case without signal}. In this case, we get from \cite{MAM08} the following expression for the driver
    \beqs\nonumber
\hspace{-8mm}f(z,u)  =  \min_{p \in [-\underline{\pi}, \overline{\pi}]}
\left\{
\frac{\lambda}{2}\left(p\sigma  - \left(z + \frac{C_{\kappa,\eta}}{\lambda}\right)\right)^2
+ \int_{\R}
\left(
\frac{e^{\lambda (u(e) - p \eta(e))} - 1}{\lambda}
- u(e)
\right)\nu(de)
\right\} - C_{\kappa,\eta} z-\frac{C_{\kappa,\eta}^2}{2\lambda},\\\nonumber
% &  &  +\int_{-c}^{0}[\frac{e^{\lambda (u(e) +\underline \pi (e\vee(-1+\eps))} - 1}{\lambda}
%- u(e)]\nu(de) %\\
% &  &
%+\int_{0}^c[\frac{e^{\lambda (u(e) +\overline \pi (e\wedge (1-\eps))} - 1}{\lambda}
%- u(e)]\nu(de)\\
% &  & -\lambda C_{\kappa,\eta} z-\frac{C_{\kappa,\eta}^2}{2\lambda} \label{Driverwithoutbigjumps}
\enqs
for $z \in \mathbb{R}$ and $u\in L^2(\nu)$. Therefore $\check f$ is given by
\beqs
\hspace{-8mm}f(z,u) = \min_{p \in [-\underline{\pi}, \overline{\pi}]}
\left\{
\frac{\lambda}{2}\left(p\sigma  - \left(z + \frac{C_{\kappa,\eta}}{\lambda}\right)\right)^2
+ \sum_{i=- q }^q
\left(
\frac{e^{\lambda (u(e_i) - p e_i)} - 1}{\lambda}
- u(e_i)
\right)\nu_i
\right\} - C_{\kappa,\eta} z-\frac{C_{\kappa,\eta}^2}{2\lambda}\;.\enqs
\end{enumerate}

%\subsection{Approximation of the driver in the classical case without signal.}
%Due to the lack of closed formula, we consider the solution without signal treated by Morlais in \cite{MAM08} to observe asymptotic property of the value function with respect to $c$. \\
%In her case, the driver is defined by 
%\begin{equation}
%    f(z, u) = \min_{p \in [-\underline{\pi}, \overline{\pi}]}
%\left\{ ( \frac{\lambda}{2} |p\sigma - (z + \frac{\theta}{\lambda})|^2 + \int_E h(u(e) - p\eta(e)) \nu(de)\right\} - \theta z - \frac{\theta^2}{2 \lambda}
%\end{equation} 
%with the premium risk $$\theta = \frac{\kappa}{\sigma}.$$\\
%This driver can be approximate with the previous discretization of the space $E$ by the function $\check f$ as follow 

%\begin{equation*}
%    \check f(z, u) = \min_{p \in [-\underline{\pi}, \overline{\pi}]}
%\left\{ ( \frac{\lambda}{2} |p\sigma - (z + \frac{\theta}{\lambda})|^2 + \sum_{i = -q, i \neq0}^{q} h(u(e_i) - p\eta(e_i)) \nu_i \right\}- \theta z - \frac{\theta^2}{2 \lambda}.
%\end{equation*}
%For the simulation in order to avoid explosion of the driver we consider the driver with two truncatures functions,  one in the  $z$ component $(\rho_m)_m$ and another one $(\varphi_m)_m$ in the $u$ component. \\
%To solver the different BSDE with jumps, we implement the following algorithm :
\paragraph{Discrete-time scheme for the BSDEJ.}
Let us denote by $\check N(i)$ the process $(\check N([0,t]\times \{e_i\})_{t\ge0}$ for $i=-q,\ldots,-1,1\ldots,q$.
We follow the discretization scheme proposed by Bouchard and Elie \cite{bouchard2008discrete}.
For that, we fix a time grid $\{0 = t_0 < t_1 < \cdots < t_n = T\}$ and we define the approximation scheme $(\bar Y_{t_k},\bar Z_{t_k}, (\bar U_{t_k}(e_i))_{i\neq 0 })_{0\leq k\leq n}$  by 
\beqs
\bar Y_{t_n} = g(S_{t_n}),
\enqs
and for \(k = n-1, \ldots, 0\),
\beqs
 \Delta t_k  & = & t_{t_{k+1}} - t_{t_k}\, \\ 
 \Delta_k W & = & W_{t_{k+1}} - W_{t_k}\, \\ 
\bar{Z}_{t_k} &= &\frac{1}{\Delta t_k}\, \mathbb{E}\!\left[\bar{Y}_{t_{k+1}} \, \Delta_k W \,\big|\, S_{t_k}\right], \\
\Delta_k \tilde N(i) & = &  N_{t_{k+1}}(i) - N_{t_k}(i)-\nu_i\Delta t_k\,\\
\bar{U}_{t_k}(e_i) &= & \frac{1}{\nu_i\Delta t_k}\, 
\mathbb{E}\!\left[\bar{Y}_{t_{k+1}} \Delta_k \tilde N(i)\,\big|\, S_{t_k}\right], ~-q\leq i\leq q\;, i\neq 0\;,\\ 
\bar{Y}_{t_k} &= &  \mathbb{E}\!\left[\bar{Y}_{t_{k+1}} \,\big|\, S_{t_k}\right] 
+ \Delta t_k\, \check f\big(\bar{Z}_{t_k}, \bar{U}_{t_k}\big).
\enqs
We notice even the algorithm uses the true value of $S$, it remains computable as $S$ can be exactly simulated.\\
We compute estimators of the conditional expectations using least squares regressions on adaptative local basis functions, see \cite{bouchardmonte}.

%\paragraph{Algorithm for the quadratic BSDEJ.}
%Let us denote by $\check N(i)$ the process $(\check N([0,t]\times \{e_i\})_{t\ge0}$ for $i=-q,\ldots,-1,1\ldots,q$.
% We follow the discretization scheme proposed by B. and Elie \cite{bouchard2008discrete}.
%As in the previous section,  we fix a time grid $\{0 = t_0 < t_1 < \cdots < t_n = T\}$ and we define the approximation scheme $(\bar Y_{t_k},\bar Z_{t_k}, (\bar U_{t_k}(e_i))_{i\neq 0 })_{0\leq k\leq n}$  by 
%\beqs
%\bar Y_{t_n} = g(S_{t_n}),
%\enqs
%and for \(k = n-1, \ldots, 0\),
%\beqs
 %\Delta t_k  & = & t_{t_{k+1}} - t_{t_k}\, \\ 
 %\Delta_k W & = & W_{t_{k+1}} - W_{t_k}\, \\ 
%\bar{Z}_{t_k} &= &\frac{1}{\Delta t_k}\, \mathbb{E}\!\left[\bar{Y}_{t_{k+1}} \, \Delta_k W \,\big|\, S_{t_k}\right], \\
%\Delta_k \tilde N(i) & = &  N_{t_{k+1}}(i) - N_{t_k}(i)-\nu_i\Delta t_k\,\\
%\bar{U}_{t_k}(e_i) &= & \frac{1}{\nu_i\Delta t_k}\, 
%\mathbb{E}\!\left[\bar{Y}_{t_{k+1}} \Delta_k \tilde N(i)\,\big|\, S_{t_k}\right], ~-q\leq i\leq q\;, i\neq 0\;,\\ 
%\bar{Y}_{t_k} &= &  \mathbb{E}\!\left[\bar{Y}_{t_{k+1}} \,\big|\, S_{t_k}\right] 
%+ \Delta t_k\, \tilde f\big(\bar{Z}_{t_k}, \bar{U}_{t_k}\big).
%\enqs\\
%\\
\subsection{Numerical results}
We consider put options with maturity $T = 1$  year and a strike price equal to $s_0 = 1$ and the following parameters 
\begin{equation*}
   \rho = 0.1,\quad \alpha = 1.5,\quad \kappa = \int_\mathbb{R}\eta(e)\nu(de) = 0,\quad  \varepsilon =0.01,\quad \lambda = 0.4,\quad \sigma = 0.2,\quad \mbox{and }n=10 \mbox{ time steps}.  %(Define the intensity of the Poisson process and replace $\nu$ by the intensity times $\nu$). 
\end{equation*}
When we don't have signal, we are in the classical case and the simulated $Y_0$ is similar to the solution with the driver found in \cite{MAM08}.
As expected from the theoretical framework, the value function at $t = 0$ exhibits
monotonicity with respect to the signal parameter $c$. This property is mathematically related to the comparison theorem for BSDE with jumps: if the terminal condition or the driver increases, the solution at time zero increases accordingly. From a financial standpoint, this monotonicity has a clear and economically meaningful interpretation.
For example in the  small jumps hidden signal case when $c$ increases, $Y_0$ increases. This reflects the fact that a stronger signal provides to the investor a 
more precise information about the jump dynamics of the risky assets. Armed with this additional information, the investor can better anticipate discontinuities in asset prices, adjust his/her portfolio strategy more efficiently, and ultimately achieve a higher expected utility of terminal wealth. In other words, information has a quantifiable economic value in this framework, and $Y_0$ measures it. \\
\begin{figure}[htbp]
    \centering
    \begin{minipage}[b]{0.49\textwidth}
        \centering
        \includegraphics[width=\textwidth]{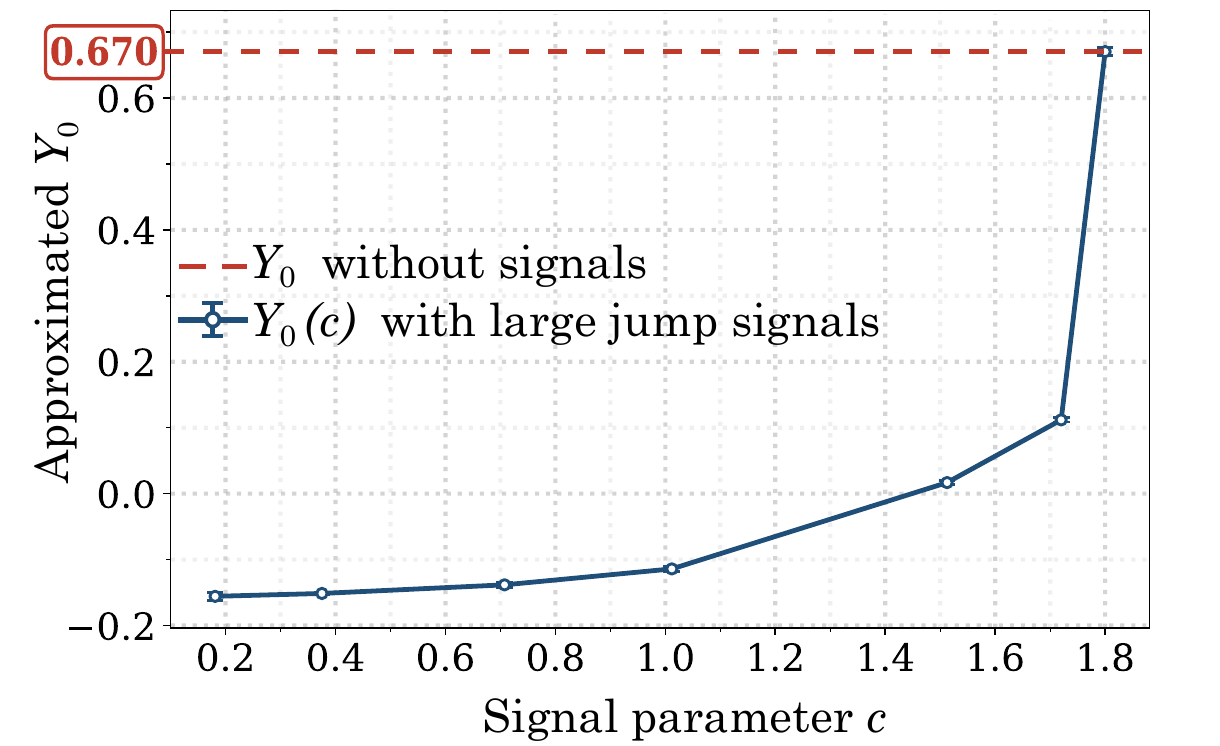}
       
    \end{minipage}
    \hfill
    \begin{minipage}[b]{0.495\textwidth}
        \centering
        \includegraphics[width=\textwidth]{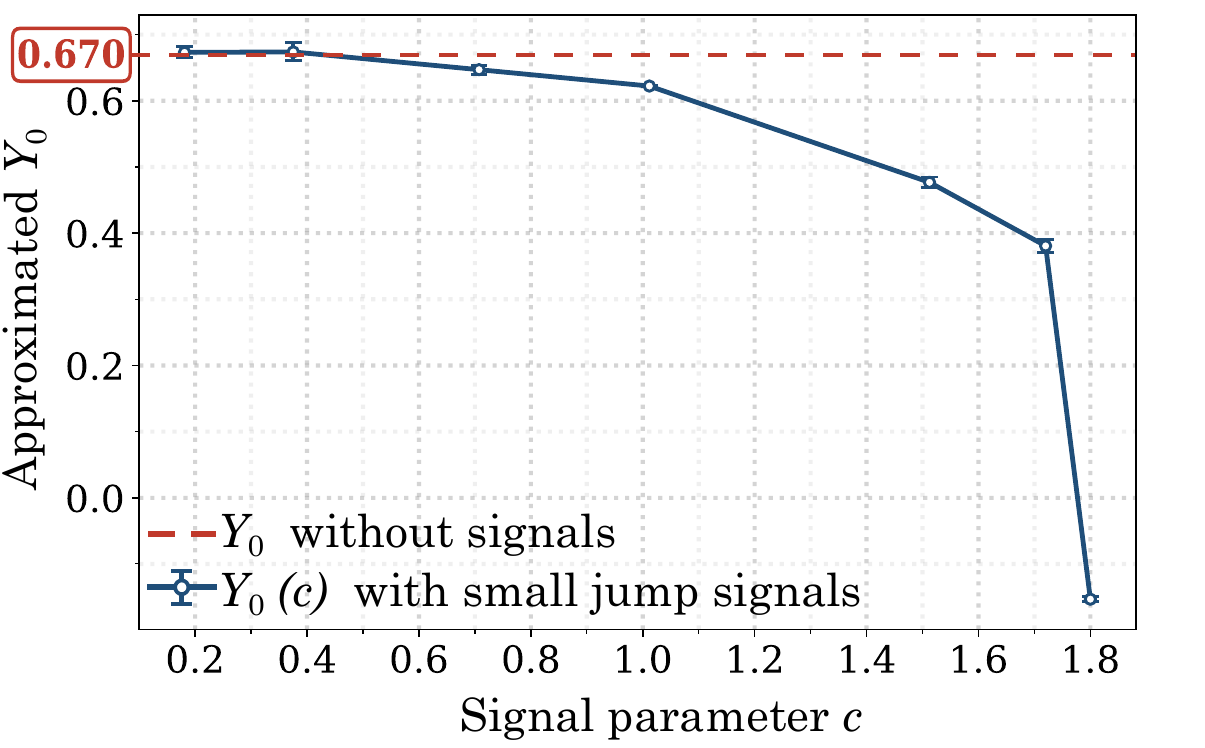}

    \end{minipage}
     \caption{The Monte Carlo simulation of $Y_0$: on the left for $\gamma(e) = \eta(e)\,\mathds{1}_{e \notin (-c,c)}\;$ and on the right for
     $\gamma(e) = \eta(e)\,\mathds{1}_{|e| \notin (c, +\infty)}\;$.}
        \label{fig:center}
\end{figure}
\begin{comment}
    
A particularly interesting situation arises when $Y_0$ becomes negative. 
To interpret this, we relate $Y_0$ to the indifference price of the claim $F$. 
When $Y_0 < 0$, the investor is effectively in a situation where the risk-adjusted cost becomes negative, meaning that the position improves their utility. This situation typically arises when the investor has sufficiently precise information about the jump structure of the asset. 
In that case, the residual risk induced by the jump component $\eta(e)$ can be efficiently managed, so that the overall position becomes beneficial from a utility perspective. 
As a result, the claim becomes financially attractive, even when accounting for risk aversion.

Importantly, this phenomenon is not related to the presence of arbitrage opportunities. 
The market remains arbitrage-free, and equivalent martingale measures exist. 
However, the jump component introduces a source of incompleteness that prevents perfect hedging. 
Improved information allows the investor to better control this source of risk, thereby reducing the associated certainty equivalent $Y_0$. 
The indifference price, which depends on both risk aversion and the available information, naturally reflects this improvement.

In summary, the decrease of $Y_0$ as the signal quality increases captures the economic value of information in incomplete markets with jumps: better information reduces effective uncertainty, lowers the risk-adjusted cost of holding the position, and enhances the investor’s welfare.
\end{comment}

\newpage{}
\appendix
\section{A comparison result for BSDEs with jumps}\label{sect7}

We provide a comparison result for BSDEs with jumps. This results is similar to Theorem 2.5 in \cite{Royer06}, but holds under a slightly different  assumption that are satisfied in the case we consider.\\
%First of all, we can remark that for all $m$, $$\mathbb{E}(\int_0^t|f_m(0,0)|^2 ds) \leq \infty$$ due to finiteless of the measure $\nu_m$, hypothesis 2.2 and boundaries of the set of valeurs of the strategies.
\begin{Theorem}\label{ComparisonBSDEJ}
Let $g^1, g^2$ be two functions from $ \R\times L^2(\nu)$ satisfying the following conditions. \begin{enumerate}[(i)]
\item $g^1$ is Lipchitz continuous w.r.t. its first variable: there exists a constant $L$ such that 
\beqs
|g^1(z,u)-g^1(z',u)| & \leq & L|z-z'|
\enqs
for all $z,z'\in \R$ and $u\in L^2(\nu)$.
\item  There exists a Borel function $k:~L^2(\nu)\times L^2(\nu)\times \R\rightarrow \R$ such that 
\beqs
     g^{1}(z,u) - g^{1}(z,u') & \leq &  \int_{\R}k(u, u')(e)(u(e) - u'(e))\nu(de)
 \enqs
 for all $ z \in \mathbb{R}$  and $u, u' \in  L^{\infty}(\nu)\cap L^{2}(\nu)$, 
%\begin{equation}
 %   |f(z, u) - f(z', u)|\leq C_3( \alpha + |z| + |z'|)|z-z'|
%\end{equation}
%for all $z, z' \in \mathbb{R}, u \in L^{\infty}(\nu)$.
\item For each $M>0$, there exist constants $\overline C_{M},\underline C_{M}>0$ and $\alpha>0$ such that the function $k$ satisfies
\beqs
\big(-1 + \underline{C}_{M}%e^{-\lambda (n+\underline{\pi})|\eta(e)|}
\big)\mathds{1}_{|e|>\alpha}  &\leq & k(u, u')(e) ~~\leq~~\mathds{1}_{|e|>\alpha}\overline C_{M}%\;,\quad e\in \R  \;,
\enqs
%and 
%\beqs
%\|k(u,u')\|_{L^2(\nu)} & \leq & \bar C_M
%\enqs
for all $u, u' \in  L^{2}(\nu)\cap L^{\infty}(\nu)$  such that  $\|u\|_{L^{\infty}(\nu)},\, \|u'\|_{L^{\infty}(\nu)} \leq M$ and all $e\in \R  $.
\end{enumerate}
Let $F^1$ and $F^2$ be two bounded $\mathcal{F}_T$-measurable random variables and $(Y^i,Z^i, U^i)\in S^\infty\times L^2(W)\times L^2(\tilde N)$ satisfying
\beqs
Y^i_t  &= & F^i+\int_t^Tg^i(Z^i_s,U^i_s)ds-\int_t^TZ^i_sdW_s-\int_t^T\int_\R U^i_s(e)\tilde N(de,ds)\;,\quad t\in[0,T]\;,
\enqs
for ${i = 1,2}$. 

If $F^2 \geq F^1$ $\P$-a.s. and $g^2 \geq g^1$ on $\R\times L^2(\nu)$, then 
\beqs
 Y^1_t & \leq & Y^2_t\;,\quad t\in[0,T]\;.
 \enqs
\end{Theorem}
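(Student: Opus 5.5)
We follow the classical linearization argument of Royer for comparison of BSDEs with jumps. Set $\delta Y=Y^1-Y^2$, $\delta Z=Z^1-Z^2$, $\delta U=U^1-U^2$ and $\delta F=F^1-F^2\le 0$. We decompose the driver difference as
\begin{align*}
g^1(Z^1_s,U^1_s)-g^2(Z^2_s,U^2_s) = {} & \big[g^1(Z^1_s,U^1_s)-g^1(Z^2_s,U^1_s)\big]+\big[g^1(Z^2_s,U^1_s)-g^1(Z^2_s,U^2_s)\big]\\
& +\big[g^1(Z^2_s,U^2_s)-g^2(Z^2_s,U^2_s)\big].
\end{align*}
The last bracket is nonpositive because $g^2\geq g^1$. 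For the first bracket we set $\beta_s=\frac{g^1(Z^1_s,U^1_s)-g^1(Z^2_s,U^1_s)}{\delta Z_s}\mathds{1}_{\delta Z_s\neq0}$. By (i), $|\beta_s|\le L$, and $\beta$ is predictable. For the second bracket, (ii) gives the upper bound $\int_\R k_s(e)\delta U_s(e)\nu(de)$ with $k_s=k(U^1_s,U^2_s)$. We therefore get
\[
\delta Y_t\le \delta F+\int_t^T\Big(\beta_s\delta Z_s+\int_\R k_s(e)\delta U_s(e)\nu(de)\Big)ds-\int_t^T\delta Z_sdW_s-\int_t^T\int_\R\delta U_s(e)\tilde N(ds,de),
\]
where the inequality is understood in the sense that $\delta Y$ solves a linear BSDE with an additional nonpositive driver term.

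Applying (ii) requires $U^1_s,U^2_s\in L^\infty(\nu)$ with a uniform bound $M$. Since $Y^i\in S^\infty$ and $\Delta Y^i_s=U^i_s(e)$ at jump times, we have $|U^i_s(e)|\le 2\|Y^i\|_\infty$ for $d\P\otimes ds\otimes\nu$-a.e. $(\omega,s,e)$. This is the standard argument: the measure $N$ charges exactly the points where the jump equals $U$, and $\E\int\!\!\int \mathds{1}_{|U|>2M}\,N(ds,de)=\E\int\!\!\int\mathds{1}_{|U|>2M}\,\nu(de)ds$ vanishes. With $M=2\max_i\|Y^i\|_\infty$, assumption (iii) gives $-1+\underline C_M\le k_s(e)\le \overline C_M$ on $\{|e|>\alpha\}$ and $k_s=0$ on $\{|e|\le\alpha\}$.

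We then introduce the Doléans-Dade exponential
\[
\Gamma_t=\mathcal{E}\Big(\int_0^\cdot\beta_sdW_s+\int_0^\cdot\int_\R k_s(e)\tilde N(ds,de)\Big)_t .
\]
Since $k_s(e)>-1$, $\Gamma$ is strictly positive. Because $\beta$ is bounded, $k$ is bounded and supported on $\{|e|>\alpha\}$, and $\nu(|e|>\alpha)<\infty$, Novikov/Lépingle–Mémin type criteria show that $\Gamma$ is a true martingale with $\E[\sup_t\Gamma_t^p]<\infty$ for all $p$. This support property is the reason assumption (iii) is formulated with $\mathds{1}_{|e|>\alpha}$. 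Itô's product formula applied to $\Gamma\delta Y$ then shows that $\Gamma_t\delta Y_t+\int_0^t\Gamma_s\Delta_s\,ds$ is a local martingale, where $\Delta_s\ge0$ collects the nonpositive terms (the third bracket and the slack in (ii)). The compensator terms cancel precisely because the cross-variation of $\tilde N$ with the jump part gives $\int k\,\delta U\,\nu(de)$. The local martingale is in fact a true martingale, since $\delta Y$ is bounded, $\Gamma$ has all moments, and $\delta Z,\delta U$ are in $L^2$; this follows from Burkholder–Davis–Gundy and Cauchy–Schwarz. Taking conditional expectations yields $\Gamma_t\delta Y_t\le\E[\Gamma_T\delta F\mid\Fc_t]\le0$, and hence $\delta Y_t\le 0$.

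We expect the main obstacle to be rigor in the jump linearization. First, measurability: we need to choose $k_s=k(U^1_s,U^2_s)$ as a $\Pc\otimes\Bc(\R)$-measurable process, which follows from $k$ being Borel. Second, the uniform $L^\infty$ bound on $U^i$, as argued above. Third, the positivity $1+k>0$, which is exactly guaranteed by $\underline C_M>0$. Once these are in place, the martingale argument is routine.
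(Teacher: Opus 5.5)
Your proof is correct and is essentially the paper's argument: you linearize in $z$ through a bounded $\beta$ and in $u$ through (ii), then use the positive exponential martingale built from $\beta$ and $k$. The only difference is presentation: you work with the product $\Gamma\,\delta Y$ under $\P$, whereas the paper passes to $\hat\P=\Gamma_T\cdot\P$ and takes $\hat\P$-conditional expectations, which is the same computation by Bayes' formula.

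One sign needs fixing in the product step: with $\Delta_s\ge 0$ the slack, the local martingale is $\Gamma_t\delta Y_t-\int_0^t\Gamma_s\Delta_s\,ds$, not $+$, and this is what gives your final inequality. Otherwise your write-up is more careful than the paper's: you apply (ii) as $k(U^1,U^2)$ in the subsolution direction, whereas the written proof orients (ii) the wrong way for its lower bound on $\hat Y$. You also justify $|U^i|\le 2\|Y^i\|_\infty$ $d\P\otimes ds\otimes\nu$-a.e., whereas the paper simply assumes $k$ is bounded.
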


\begin{proof}
    %The lipschitz property of theses generator garantees existence and uniqueness of the solution $(Y^i,Z^i,U^i)_{i = 1,2}$ of each BSDE.\\
    We define 
\beqs
    \hat{F} ~=~ F^2 - F^1,\;~
        \hat{g} ~=~ g^2 - g^1,\;~
    \hat{Y} ~=~ Y^2 - Y^1,\;~
    \hat{Z} ~=~  Z^2 - Z^1\;~
    \hat{U}~ =~ U^2 - U^1\;.
\enqs
We also define the process $\beta$ by
\begin{equation*}
\beta_t
   =   \left\{
 \begin{array}{cc}\frac{g^1(Z^2_t, U^2_t) - g^1(Z^1_t, U^2_t)}{ Z^2_t - Z^1_t } & \mbox{ if }
    Z^1_t \neq Z^2_t \\
     0 & \mbox{ else}
     \end{array}\right.
\end{equation*}
for $t\in[0,T]$. Since $g^1$ is Lipschitz w.r.t. $z$, $\beta$ is bounded.
%By linearizing the driver $\hat{g_m} = g^1_m - g^2_m$ 
We then  have  
\beqs
\hat{Y}_t & = & \hat{F} + \int_t^T \hat{g}(Z^2_s, U^2_s)ds + \int_t^T(g^1(Z^1_s, U^2_s) - g^1(Z^1_s,U^1_s))ds +\int_t^T \beta_s \hat{Z}_sds \\
 & & -\int_t^T \hat Z_s dW_s -\int_t^T \int_\R \hat{U}_s(e) \tilde{N}(ds, de)\;,\quad t\in[0,T]\;.
 \enqs
Using $\hat{F}\geq 0$, $\hat{g} \geq 0$ and \beqs
     g^1_{}(Z^1_s,U^2_s) - g^1_{}(Z^1_s,U^1_s) & \leq &  \int_{\R}k_{}(U^2_s, U^1_s)(e)(U^2_s(e)-U^1_s(e))\nu(de)
 \enqs
 we have 
 \beq \label{ineqdeltaY}
 \hat{Y}_t & \geq &  \int_t^T \left(\beta_s \hat{Z}_s + \int_\R k(U^2_s,U^1_s)(e) \hat{U_s}(e)\nu(de) \right) ds -\int_t^T \hat Z_s dW_s -\int_t^T \int_\R \hat{U}_s(e) \tilde{N}(ds, de) \enq
for all $t\in[0,T]$. We next define the process $\Ec_t$ as the unique solution to 
\beqs
\Ec_t & = & 1+\int_0^t \Ec_{s-}\Big( \beta_s dW_s+\int_\R k(U^2_s,U^1_s)(e) \tilde{N}(ds, de) \Big)\;,\quad t\in[0,T]
\enqs
Since $\beta$ and $k(U^2,U^1)$ are bounded and $k(U^2,U^1)$ satisfies $(iii)$, this SDE admits a unique solution which is a positive martingale and we have
\beqs
\sup_{t\in[0,T]} \E\Big[|\Ec_t|^2\Big] & < & +\infty\;.
\enqs
We can define the equivalent probability measure $\hat \P$ on $\Fc_T$ by
\beqs
\left.\frac{d\hat \P}{d\P}\right|_{\Fc_T} & = & \Ec_T
\enqs
By Girsanov Theorem,  $\hat W_t = W_t - \int_0^t \beta_s ds$ is a Brownian motion under $\hat \P$ and $N$ is an independent Poisson measure with compensator measure  $\hat \nu(de):= \nu(de)(1+k(U^1,U^2)(e))$ under $\hat \P$.
In particular, the process $(M_t)_{t\in[0,T]}$ defined by \beqs
M_t & = & \int_0^t \hat Z_s d\hat W_s +\int_0^t \int_\R \hat U_s(e) (N(de,ds)-\hat \nu(de)ds)\;,\quad t\in[0,T]\;,
\enqs
is a local martingale. Using BDG inequality, there exists a constant $C$ such that
\beqs
\E^{\hat\P}\Big[\sup_{t\in[0,T]}|M_t|\Big] & \leq & C\E^{\hat\P}\Big[\sqrt{ \int_0^T |Z_s|^2ds+\int_0^T\int_\R |U_s(e)|^2\hat\nu(de)ds }\Big]
\enqs
From the definition of $\hat \P$, Cauchy-Schwartz inequality and since $k(U^2,U^1)$ is bounded there exists a constant $C'$ such that
\beqs
\E^{\hat\P}\Big[\sup_{t\in[0,T]}|M_t|\Big] & \leq & C'\sqrt{\E\Big[|\Ec_T|^2\Big]}
\sqrt{ \E\Big[\int_0^T |Z_s|^2ds+\int_0^T\int_\R |U_s(e)|^2\nu(de)ds \Big]}~<~+\infty\;.
\enqs
Therefore, $M$ is a martingale under $\hat \P$ as a uniformly integrable local martingale. Taking the conditional expectation given $\Fc_t$ under $\hat \P$ in \reff{ineqdeltaY} give $\hat Y_t  \geq 0$.
 \end{proof}

\bibliographystyle{plain}%apalike}
\bibliography{references}

\begin{thebibliography}{10}

\bibitem{bank2022merton}
Peter Bank and Laura K{\"o}rber.
\newblock Merton's optimal investment problem with jump signals.
\newblock {\em SIAM Journal on Financial Mathematics}, 13(4):1302--1325, 2022.

\bibitem{BS25}
Peter Bank and Gemma Sedrakjan.
\newblock How much should we care about what others know? jump signals in
  optimal investment under relative performance concerns.
\newblock {\em arXiv:2503.16039v2}, 2025.

\bibitem{becherer2006bounded}
Dirk Becherer.
\newblock Bounded solutions to backward sdes with jumps for utility
  optimization and indifference hedging.
\newblock {\em Ann. Appl. Probab.}, 16(4):2027--2054, 2006.

\bibitem{bouchard2008discrete}
Bruno Bouchard and Romuald Elie.
\newblock Discrete-time approximation of decoupled forward--backward sde with
  jumps.
\newblock {\em Stochastic Processes and their Applications}, 118(1):53--75,
  2008.

\bibitem{bouchardmonte}
Bruno Bouchard and Xavier Warin.
\newblock Monte-carlo valorisation of american options: facts and new
  algorithms to improve existing methods.
\newblock {\em Springer Proceedings in Mathematics}, 12.

\bibitem{CK92}
Jaksa Cvitanic and Ioani Karatzas.
\newblock Convex duality in constrained portfolio optimization.
\newblock {\em Ann. Appl. Probab.}, 2:767–818, 1992.

\bibitem{EMN16}
Nicole El~Karoui, Anis Matoussi, and Armand Ngoupeyou.
\newblock {Quadratic Exponential Semimartingales and Application to BSDEs with
  jumps}.
\newblock {\em arXiv:1603.06191v1}, 2016.

\bibitem{Kunita04}
Kunita Hiroshi.
\newblock Representation of martingales with jumps and applications to
  mathematical finance.
\newblock {\em Advanced Studies in Pure Mathematics, Stochastic Analysis and
  Related Topics in Kyoto: In honour of Kiyosi Itô}, 41:209--232, 2004.

\bibitem{hu2005utility}
Ying Hu, Peter Imkeller, and Matthias M{\"u}ller.
\newblock Utility maximization in incomplete markets.
\newblock {\em Ann. Appl. Probab.}, 15(3):1691--1712, 2005.

\bibitem{jacod2013limit}
Jean Jacod and Albert Shiryaev.
\newblock {\em Limit theorems for stochastic processes}, volume 288.
\newblock Springer, A Series of Comprehensive Studies in Mathematics, 2003.

\bibitem{kobylanski}
Magdalena Kobylanski.
\newblock Backward stochastic differential equations and partial differential
  equations with quadratic growth.
\newblock {\em The annals of probability}, 28(2):558--602, 2000.

\bibitem{delong13}
Delong Lukasz.
\newblock {\em {Backward} {Stochastic} {D}ifferenrtial {E}quations with {Jumps}
  and {Their} {Actuarial} and {Financial} {Applications}}.
\newblock Springer-Verlag, 2013.

\bibitem{MS19}
Anis Matoussi and Rym Salhi.
\newblock Exponential quadratic bsdes with infinite activity jumps.
\newblock {\em preprint}, 2020.

\bibitem{Merton69}
Robert~C. Merton.
\newblock {Lifetime Portfolio Selection under Uncertainty: The Continuous-Time
  Case}.
\newblock {\em Journal of Economic Theory}, 3(4):373–413, 1971.

\bibitem{Merton71}
Robert~C. Merton.
\newblock Optimum consumption and portfolio rules in a continuous-time model.
\newblock {\em The Review of Economics and Statistics}, 51(3):373–413,
  247-257.

\bibitem{MAM08}
Marie-Am{\'e}lie Morlais.
\newblock Utility maximization in a jump market model.
\newblock {\em Stochastics and Stochastics Reports}, 80:1--27, 2008.

\bibitem{MAM09}
Marie-Am{\'e}lie Morlais.
\newblock Quadratic bsdes driven by a continuous martingale and application to
  utility maximization problem.
\newblock {\em Finance and Stochastics}, 13:121--150, 2009.

\bibitem{KTPZ15}
Dylan Possamai, Nabil Kazi-Tani, and Chao Zhou.
\newblock {Quadratic BSDEs with jumps: a fixed-point approach}.
\newblock {\em Electron. J. Probab.}, 20:1--28, 2015.

\bibitem{Royer06}
Manuella Royer.
\newblock Backward stochastic differential equations with jumps and related
  non-linear expectations.
\newblock {\em Stochastic Processes and Their Applications},
  116(10):1358--1376, 2006.

\end{thebibliography}

\newpage
%\begin{thebibliography}{9}

%\bibitem{bank2022merton}
%Bank Peter and K{\"o}rber Laura,
%\textit{Merton's Optimal Investment Problem with Jump Signals},
%SIAM Journal on Financial Mathematics, 2022

%\bibitem{jacod2013limit}
%J. Jacod and A. N. Shirayev,
%\textit{Limit theorems for stochastic processes},
%Springer Science \& Business Media, 2013.

%\bibitem{}
%\textit{}

%\end{thebibliography}
\end{document}